\theoremstyle{definition}
\newtheorem{theorem}{Theorem}[section]
\newtheorem{setup}[theorem]{Setup}
\newtheorem{corollary}[theorem]{Corollary}
\newtheorem{lemma}[theorem]{Lemma}
\newtheorem{proposition}[theorem]{Proposition}
\newtheorem{notation}[theorem]{Notation}
\newtheorem{convention}[theorem]{Convention}
\newtheorem{question}[theorem]{Question}
\newtheorem*{theoremA}{Theorem A}
\newtheorem*{theoremB}{Theorem B}
\newtheorem*{theoremC}{Theorem C}
\newtheorem*{theoremD}{Theorem D}
\newtheorem*{theoremE}{Theorem E}
\newtheorem*{MainR}{Theorem \ref{Main: T}}
\newtheorem*{UniformLBoundsR}{Corollary \ref{UniformLBounds: C}}
\newtheorem*{SecondaryR}{Theorem \ref{Secondary: T}}
\theoremstyle{definition}
\newtheorem{definition}[theorem]{Definition}
\newtheorem{example}[theorem]{Example}
\newtheorem{conjecture}[theorem]{Conjecture}
\newtheorem{remark}[theorem]{Remark}
\numberwithin{equation}{subsection}
\newcommand{\Colon}[2]{\left( #1 \colon #2 \right)}
\newcommand{\bracket}[2]{{#1}^{[p^{#2}]}}
\newcommand{\jac}[1]{\operatorname{Jac}\( #1 \)}
\newcommand{\m}{\mathfrak{m}}
\newcommand{\wt}[1]{\omega_{#1}}  
\newcommand{\qdeg}{\operatorname{deg}}
\newcommand{\qh}{homogeneous }
\newcommand{\qhns}{homogeneous} 
\newcommand{\Deg}[2]{\left[ #1 \right]_{#2}}
\newcommand{\up}[1]{\left\lceil #1 \right\rceil}
\newcommand{\down}[1]{\left\lfloor #1 \right\rfloor}
\newcommand{\tr}[2]{\left \langle {#1} \right \rangle_{#2}} 
\newcommand{\digit}[2]{ {#1}^{(#2)}}
\newcommand{\base}{\operatorname{base}}
\newcommand{\lpr}[2]{ \left \llbracket \hspace{.01in} #1 \ \% \ #2 \hspace{.01in} \right \rrbracket} 
\newcommand{\paren}[2]{#1 #2}
\newcommand{\parenone}[1]{#1}
\renewcommand{\th}{\text{th}}
\newcommand{\new}[1]{ \nu_{f}(p^{#1})} 
\newcommand{\newfg}[1]{ \nu_{f+g}(p^{#1})} 
\newcommand{\fpt}[1]{\boldsymbol{\operatorname{fpt}}\left(#1\right)}
\newcommand{\lct}[1]{\boldsymbol{\operatorname{lct}}\left(#1\right)}
\newcommand{\set}[1]{\left\{ #1 \right\}}
\newcommand{\df}[1]{\partial_{{#1}} (f)}  
\renewcommand{\(}{\left(}
\renewcommand{\)}{\right)}
\newcommand{\sM}{\mathscr{M}}
\newcommand{\Ell}{L}
\newcommand{\order}[2]{\operatorname{ord}(#1,#2)}
\newcommand{\FF}{\mathbb{F}}
\newcommand{\NN}{\mathbb{N}}
\newcommand{\LL}{\mathbb{L}}
\newcommand{\ZZ}{\mathbb{Z}}
\newcommand{\RR}{\mathbb{R}}
\newcommand{\Q}{\mathbb{Q}}
\newcommand{\N}{\mathbb{N}}
\newcommand{\fp}{f_p}
\newcommand{\fQ}{f_{\mathbb{Q}}}
\begin{document}

\title{$F$-pure thresholds of homogeneous polynomials}
\author{Daniel J.\ Hern\'andez, Luis N\'u\~nez-Betancourt, Emily E.\ Witt, and Wenliang Zhang}
\maketitle

\begin{abstract}
In this article, we investigate $F$-pure thresholds of polynomials that are homogeneous under some $\N$-grading, and have an isolated singularity at the origin.  We characterize these invariants in terms of the base $p$ expansion of the corresponding log canonical threshold. 
As an application, we are able to make precise some bounds on the difference between $F$-pure and log canonical thresholds established by Musta\c{t}\u{a} and the fourth author.  
We also examine the set of primes for which the $F$-pure and log canonical threshold of a polynomial must differ.
Moreover, we obtain results in special cases on the ACC conjecture for $F$-pure thresholds, and on the upper semi-continuity property for the $F$-pure threshold function.  
\end{abstract}

\section{Introduction}
\newtheorem*{theoremNL}{Theorem}
\newtheorem*{corollaryNL}{Corollary}
\newtheorem*{propositionNL}{Proposition}
\newtheorem*{questionNL}{Question}

The goal of this article is to investigate $F$-pure thresholds, and further study their relation with log canonical thresholds. The $F$-pure threshold, first defined in \cite{TW2004}, is an invariant of singularities in positive characteristic defined via splitting conditions and the Frobenius (or $p^{\th}$-power) endomorphism.  Though $F$-pure thresholds may be defined more generally, we will only consider $F$-pure thresholds of polynomials over fields of prime characteristic, and thus follow the treatment given in \cite{MTW2005}. 
Given such a polynomial $f$, the $F$-pure threshold of $f$, denoted by $\fpt{f}$, is always a rational number in $(0,1]$, with smaller values corresponding to ``worse'' singularities \cite{BMS2008, BMS2009, BSTZ2009}.

The log canonical threshold of a polynomial $\fQ$ over $\Q$, denoted $\lct{\fQ}$, is an important invariant of singularities of $\fQ$, and can be defined via integrability conditions, or more generally, via resolution of singularities.  
Like the $F$-pure threshold, $\lct{\fQ}$ is also a rational number contained in $(0,1]$;  see \cite{BL2004} for more on this (and related) invariants.  In fact, the connections between $F$-pure and log canonical thresholds run far deeper:  As any $\frac{a}{b} \in \Q$ determines a well-defined element of $\FF_p$ whenever $p \nmid b$, we may reduce the coefficients of $\fQ$ modulo $p \gg 0$ to obtain polynomials $\fp$ over $\FF_p$.  Amazingly, the $F$-pure thresholds of these so-called characteristic $p$ models of $\fQ$ are related to the log canonical threshold of $\fQ$ as follows  \cite[Theorems 3.3 and 3.4]{MTW2005}: 
\begin{equation}
\fpt{\fp} \leq \lct{\fQ} \text{ for all } p \gg 0 \text{ and } \lim \limits_{p \to \infty} \fpt{\fp} = \lct{\fQ}. \label{relations}
\end{equation}
In this article, we will not need to refer to the definition of $\lct{\fQ}$ via resolutions of singularities, and instead take the limit appearing in \eqref{relations} as our definition of $\lct{\fQ}$. Via reduction to characteristic $p>0$, one may reduce
polynomials (and more generally, ideals of finite type algebras) over
\emph{any} field of characteristic zero to characteristic $p \gg 0$ (e.g.,
see \cite{Karen1997}).  Moreover, the relations in \eqref{relations} are just two of 
many deep connections between invariants of characteristic $p$
models defined via the Frobenius endomorphism, and invariants of the
original characteristic zero object that are often defined via resolution of
singularities.  For more in this direction, see, for example,
\cite{MTW2005, BMS2006, Karen2000, Karen1997b, Hara1998, HW2002,
  HY2003, Tak2004, Schwede2007, BST13, STZ12}.

Motivated by the behavior exhibited when $\fQ$ defines an elliptic curve over $\Q$, it is conjectured  that for \emph{any} polynomial $\fQ$ over $\Q$, there exist infinitely many primes for which $\fpt{\fp}$ equals $\lct{\fQ}$ \cite[Conjecture 3.6]{MTW2005}. This conjecture, along with other characteristic zero considerations, has fueled interests in understanding various properties of $\fpt{\fp}$.  
In particular,  arithmetic properties of the denominator of $\fpt{\fp}$ have recently been investigated, most notably by Schwede (e.g., see \cite{Schwede2008}).   
Assuming $\fpt{\fp} \neq \lct{\fQ}$, Schwede has asked when $p$ must divide the denominator of $\fpt{\fp}$, and the first author has asked when the denominator of $\fpt{\fp}$ must be a power of $p$, and more specifically, when $\fpt{\fp}$ must be a \emph{truncation} of $\lct{\fQ}$.\footnote{\url{https://sites.google.com/site/computingfinvariantsworkshop/open-questions}}
Recall that, given the unique non-terminating (base $p$) expansion $\lct{\fQ} = \sum_{e \geq 1} \digit{\lambda}{e} \cdot p^{-e} \in (0,1]$, we call $\fpt{\fp}$ a truncation of $\lct{\fQ}$ (base $p$) if $\fpt{\fp} = \sum_{e=1}^L \digit{\lambda}{e} \cdot p^{-e}$ for some $L \geq 1$.

In this paper, we study $F$-pure thresholds associated to polynomials that are homogeneous under some (possibly, non-standard) $\NN$-grading, and that have an isolated singularity.  The $F$-purity of such polynomials was originally investigated by Fedder (e.g., see \cite[Lemma 2.3 and Theorem 2.5]{Fed1983}), and more recently, by Bhatt and Singh, who showed the following:  Given a (standard-graded) homogeneous polynomial  $f$ over $\FF_p$ of degree $n$ in $n$ variables with an isolated singularity at the origin, if $p \gg 0$, then $\fpt{\fp} = 1 - \frac{A}{p}$ for some integer $0 \leq A \leq n-2$.  Bhatt and Singh also show that, if $f$ is (standard-graded) homogeneous of arbitrary degree with an isolated singularity at the origin, and if $\fpt{\fp} \neq \lct{\fQ}$, then the denominator of $\fpt{\fp}$ is a power of $p$ whenever $p \gg 0$ \cite[Theorem 1.1 and Proposition 5.4]{BhattSingh}. 

We combine a generalization of the methods in \cite{BhattSingh} with a careful study of base $p$ expansions to obtain our main result, Theorem \ref{Main: T}, which characterizes $F$-pure thresholds of  polynomials with an isolated singularity at the origin that are \qh under {some} $\NN$-grading.
Our result states  that such $F$-pure thresholds must have a certain (restrictive) form; 
in particular, it confirms that the denominator of $\fpt{\fp}$ is a power of $p$ whenever $\fpt{\fp} \neq \lct{\fQ}$ for this larger class of polynomials.
Notably, the result also gives a bound for the power of $p$ appearing in the denominator of $\fpt{\fp}$ for $p \gg 0$.  To minimize technicalities, we omit the statement of Theorem \ref{Main: T}, and instead discuss the two variable case, where our main result takes the following concrete form;  note that in what follows, we use $\jac{f}$ to denote the ideal generated by the partial derivatives of a polynomial $f$, and $\order{p}{b}$ to denote the least positive integer $k$ such that $p^k \equiv 1 \bmod b$.

\begin{theoremA}[{cf.}\ Theorem \ref{Main2D: T}]  \label{A: Thm}
Fix an $\NN$-grading on $\mathbb{F}_p[x,y]$, and consider a \qh polynomial $f$ with $\sqrt{\jac{f}} = (x,y)$ such that $\qdeg f \geq \qdeg xy$.  If $p \nmid \qdeg f$ and $\fpt{f} \neq \frac{\qdeg xy}{\qdeg f}$, then \[ \fpt{f} = \frac{ \qdeg xy}{\qdeg f} - \frac{\lpr{\paren{p^L}{\qdeg xy}}{\qdeg f }}{p^L\qdeg f } \text{ for some integer $1 \leq L \leq \order{p}{\qdeg f}$,} \] 
where $\lpr{\paren{a}{p^L}}{b}$ denotes the least \emph{positive} residue of $a p^L$ modulo $b$.
\end{theoremA}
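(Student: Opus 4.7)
The plan is to first identify $\frac{\qdeg xy}{\qdeg f} = \frac{\wt{x}+\wt{y}}{\qdeg f}$ as the log canonical threshold $\lct{f}$, a standard fact for weighted homogeneous polynomials with an isolated singularity at the origin which follows, for example, from a weighted blow-up computation. Writing $a := \wt{x}+\wt{y} = \qdeg xy$ and $d := \qdeg f$, a direct manipulation shows
\[
\frac{a}{d} \;-\; \frac{\lpr{p^L a}{d}}{p^L d} \;=\; \frac{\lfloor p^L a/d \rfloor}{p^L},
\]
so the theorem is asserting that $\fpt{f}$ is precisely the length-$L$ base $p$ truncation of $\lct{f}$ for some $1 \leq L \leq \order{p}{d}$ whenever $\fpt{f} \neq \lct{f}$.

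Next, I would compute $\fpt{f}$ via $\fpt{f} = \lim_{e\to\infty} \new{e}/p^e$, where $\new{e} = \max\{N : f^N \notin \m^{[p^e]}\}$. Since $f$ is weighted homogeneous of degree $d$, the power $f^N$ is weighted homogeneous of degree $Nd$, while the socle of $\FF_p[x,y]/\m^{[p^e]}$ is generated by $x^{p^e-1}y^{p^e-1}$ in weighted degree $(p^e-1)a$. The degree bound $Nd \leq (p^e-1)a$ recovers $\new{e}/p^e \leq \lct{f}$. For the reverse direction I would use the isolated-singularity hypothesis in the spirit of \cite{BhattSingh}, combined with the weighted Euler identity $d \cdot f = \wt{x}\, x\, \partial_x f + \wt{y}\, y\, \partial_y f$ (nontrivial precisely because $p \nmid d$) to locate, for each $e$, the largest $N$ for which a monomial of $f^N$ reaches the top of $\FF_p[x,y]/\m^{[p^e]}$. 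The outcome should be an explicit formula expressing $\new{e}$ as $\lfloor p^e a/d \rfloor$ unless a certain residue condition modulo $d$ fails, in which case $\new{e}$ is smaller by a controlled amount.

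Translating the output of step two into base $p$: since $p\nmid d$, the rational $a/d$ has a purely periodic base $p$ expansion whose period divides $\order{p}{d}$, so the truncations $\lfloor p^L a/d\rfloor/p^L$ cycle through the finitely many residue patterns $\lpr{p^L a}{d}/d$. If $\fpt{f} < \lct{f}$, the sequence $\new{e}/p^e$ must fail to approach $\lct{f}$ in the limit, and step two identifies its limit with one of these truncations.

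The main obstacle is the sharp bound $L \leq \order{p}{d}$. The argument above identifies $\fpt{f}$ with \emph{some} truncation, but restricting $L$ to the first period uses the cyclic structure of $\lpr{p^L a}{d}$: one must show that the earliest $e$ at which $\new{e}$ strictly falls below $\lfloor p^e \lct{f}\rfloor$ already lies in $\{1,\ldots,\order{p}{d}\}$, and that after this first drop the ratchet inequality $\new{e+1} \geq p\,\new{e}$ forces $\new{e}/p^e$ to stabilize at the corresponding truncation. Pinning this stabilization to the precise residue arithmetic in the statement is where the bulk of the technical work concentrates.
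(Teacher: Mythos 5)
Your overall strategy is the same as the paper's: bound $\new{e}$ from above by weighted-degree considerations, bound it from below using the isolated-singularity hypothesis as in \cite{BhattSingh}, and convert the result into a statement about base $p$ digits of $\lambda = \qdeg xy/\qdeg f$. The upper bound and the identity $\lambda - \lpr{p^L\qdeg xy}{\qdeg f}/(p^L\qdeg f) = \tr{\lambda}{L}$ are fine. But the two places where all the difficulty lives are left as gestures, and one of them is mis-described. First, the lower bound: the paper's Proposition \ref{nubounds: P} gives $\new{e} \geq \lceil (p^e+1)\lambda - n\rceil$ \emph{conditionally on} $p \nmid (\new{e}+1)$, equivalently on the $e^{\th}$ digit of $\fpt{f}$ not being $p-1$ — a condition modulo $p$, not the ``residue condition modulo $d$'' you posit. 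This conditional structure is the engine of the whole argument (it is what lets one define $L$ as the last index with digit $\neq p-1$ and control the deficit $\Delta_L$ there), and obtaining it requires differentiating $f^{\new{e}+1} \in \bracket{\m}{e}$, passing to the colon ideal $\Colon{\bracket{\m}{e}}{\jac{f}}$, and bounding degrees below via the Hilbert series of $R/\jac{f}$ (Lemma \ref{Anurag'sTrick: L}). None of that is in your sketch, and without the precise form of the bound you cannot conclude $E=0$, i.e., that $\fpt{f}$ is \emph{exactly} a truncation rather than a truncation minus some $E/p^L$; in the paper this follows only because the explicit bound $E \leq n-1-\up{(\lpr{\paren{a}{p^L}}{b}+a)/b}$ forces $E=0$ when $n=2$.

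Second, the bound $L \leq \order{p}{\qdeg f}$, which you correctly identify as the main obstacle, cannot be extracted from the ratchet inequality $\new{e+1} \geq p\new{e}$ alone: that inequality is consistent with the first drop occurring arbitrarily late. The paper's Corollary \ref{boundsOnFirstDiff: C} rules this out using the second part of Proposition \ref{Truncation: P} (from \cite{Singularities}): an $F$-pure threshold is bounded above by the number formed by its own trailing digits starting at any position. Combined with the periodicity of the expansion of $\lambda$, this shows that if $\tr{\fpt{f}}{s} = \tr{\lambda}{s}$ for $s = \order{p}{b}$ then the agreement propagates to all multiples of $s$, forcing $\fpt{f} = \lambda$. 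Your plan contains no substitute for this input, so the restriction of $L$ to the first period is unproved as written.
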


In fact, we are able to give a slightly more refined description of the $F$-pure threshold, even in the two variable case;  we refer the reader to Theorem \ref{Main2D: T} for the detailed statement.  Moreover, we may recast Theorem A as a theorem relating $F$-pure and log canonical thresholds:  If $\fQ \in \Q[x,y]$ is a homogenous and satisfies the conditions appearing in Theorem $A$ (i.e., $\qdeg \fQ \geq \qdeg xy$ and $(x,y) = \sqrt{\jac{\fQ}}$), then it is well-known (e.g., see Theorem \ref{DifferenceBounds: T}) that $\lct{\fQ} = \frac{ \qdeg xy}{\qdeg f}$. Substituting this identity into Theorem A leads to a description of $\fpt{\fp}$ in terms of $\lct{\fQ}$, and in fact is enough to show that $\fpt{\fp}$ is a truncation of $\lct{\fQ}$ (e.g., see Lemma \ref{explicittruncation: L}).

Though the situation is more subtle, many of the properties highlighted by Theorem A and the subsequent discussion hold in general (after some slight modifications); we refer the reader to Theorem \ref{Main: T} for a detailed description of $F$-pure thresholds in higher dimensions.  Moreover, motivated by (the bounds for $L$ appearing in) Theorem A, one may ask whether there always exists a (small) finite list of possible values for $F$-pure thresholds, say, as a function of the class of $p$ modulo $\qdeg f$.  This question turns out to have a positive answer for homogeneous polynomials with isolated singularities.  Furthermore,  these lists can be minimal, and  strikingly, can even precisely determine $\fpt{\fp}$.  For examples of such lists, see Examples \ref{6LinearFactors: E}, \ref{7LinearFactors: E}, and \ref{determined: E}.

The remaining results in this article are all applications of our description of $F$-pure thresholds.  The first such application concerns uniform bounds for the difference between log canonical and $F$-pure thresholds. 
We recall the following result, due to Musta\c{t}\u{a} and the fourth author:
Given a polynomial $\fQ$ over $\Q$, there exist constants $C \in \mathbb{R}_{>0}$ and $N \in \NN$ (depending only on $\fQ$) such that 
\[ \frac{1}{p^N} \leq \lct{\fQ} - \fpt{\fp} \leq \frac{C}{p} \] whenever $\fpt{\fp} \neq \lct{\fQ}$ and $p \gg 0$ \cite[Corollaries 3.5 and 4.5]{MZ2012}.  We stress that the preceding result applies to an arbitrary polynomial, and that the constants $C$ and $N$ are not explicitly stated as functions of $\fQ$.  In the special case of a \qh polynomial with an isolated singularity at the origin, we give a new proof of this result that makes explicit one optimal choice of constants.

\begin{theoremB}[ {cf}.\ Theorem \ref{DifferenceBounds: T}]
Suppose  $\fQ \in \Q[x_1, \cdots, x_n]$ is \qh under some $\NN$-grading with an isolated singularity at the origin, and write the rational number $\lct{\fQ} = \frac{a}{b}$ in lowest terms.  If $p \gg 0$, then either $\fpt{\fp} = \lct{\fQ}$, or \[\frac{b^{-1}}{p^{\order{p}{b}}}\leq \lct{\fQ}- \fpt{\fp} \leq \frac{n-1-b^{-1}}{p}.\]
Moreover, these bounds are sharp (see Remark \ref{SharpBounds: R}).
\end{theoremB}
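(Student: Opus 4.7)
The plan is to derive Theorem B as an arithmetic consequence of Theorem \ref{Main: T}. A preliminary ingredient is the identity $\lct{\fQ} = (\sum_{i=1}^n \wt{i})/\qdeg{\fQ}$, valid for weighted homogeneous isolated singularities whenever this ratio is at most $1$; this is what fixes the representation $\lct{\fQ} = a/b$ in lowest terms. This identity can be established either by a standard argument (Howald's formula, or the computation in \cite{MTW2005}) or directly by passing to the limit in \eqref{relations} using the explicit formula provided by Theorem \ref{Main: T}.

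Assume now that $\fpt{\fp} \neq \lct{\fQ}$ and that $p \gg 0$, so that in particular $p \nmid b$. Theorem \ref{Main: T} then produces an explicit description of $\fpt{\fp}$. In the two-variable case (Theorem A), after cancelling the common factor $d = \gcd(\sum_i \wt{i}, \qdeg \fQ)$ from numerator and denominator of the correction term, the formula reduces to
\[
\lct{\fQ} - \fpt{\fp} \;=\; \frac{\lpr{p^L a}{b}}{p^L b}, \qquad 1 \leq L \leq \order{p}{b}.
\]
For general $n$, I anticipate that Theorem \ref{Main: T} produces an expression of the same shape,
\[
\lct{\fQ} - \fpt{\fp} \;=\; \frac{r}{p^L b}, \qquad 1 \leq L \leq \order{p}{b},\qquad 1 \leq r \leq (n-1)b - 1,
\]
where the upper bound on $r$ is the $n$-variable analogue of the digit bound $A \leq n-2$ obtained by Bhatt and Singh in the standard-graded case. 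I expect verifying this bound on $r$ to be the main obstacle, as it is the one structural input in dimension $n$ not already visible in Theorem A; it controls the dependence on $n$ in Theorem B.

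Both inequalities then follow by bookkeeping. For the lower bound, $r \geq 1$ and $L \leq \order{p}{b}$ give
\[
\lct{\fQ} - \fpt{\fp} \;=\; \frac{r}{p^L b} \;\geq\; \frac{1}{p^{\order{p}{b}}\, b} \;=\; \frac{b^{-1}}{p^{\order{p}{b}}},
\]
while for the upper bound, maximising $r$ at $(n-1)b - 1$ and taking $L = 1$ yields
\[
\lct{\fQ} - \fpt{\fp} \;\leq\; \frac{(n-1)b - 1}{p\,b} \;=\; \frac{n - 1 - b^{-1}}{p}.
\]
Finally, to check sharpness (Remark \ref{SharpBounds: R}), I would exhibit polynomials realising both extremes: Bhatt-Singh's extremal standard-graded examples saturate the upper bound when $b = 1$, and a suitable diagonal (or Fermat-type) polynomial with carefully chosen weights realises $r = 1$ at $L = \order{p}{b}$ for the lower bound.
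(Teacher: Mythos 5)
Your proposal is correct and takes essentially the same route as the paper: identify $\lct{\fQ} = \min \set{ \frac{\sum \qdeg x_i}{\qdeg \fQ}, 1}$ by passing to the limit in \eqref{relations}, then substitute the $(L,E)$-description of $\fpt{\fp}$ from Theorem \ref{Main: T} and bound each term. The one step you flag as the main obstacle, namely $1 \leq r \leq (n-1)b-1$, is in fact immediate from part (1) of Theorem \ref{Main: T}: writing $r = \lpr{a p^L}{b} + bE$ with $E \leq n-1-\up{ \frac{\lpr{a p^L}{b} + a}{b} }$ gives $r \leq (b-1)+b(n-2) = (n-1)b-1$ when $b>1$ and $r \leq 1+(n-3) = (n-1)b-1$ when $b=1$, so there is no genuine gap.
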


Much of the focus of this article is on studying the form of the $F$-pure threshold when it differs from the log canonical threshold.  In Section \ref{BadPrimes: SS}, we give a simple criterion that, when satisfied, guarantees that the $F$-pure and log canonical threshold must differ.  {The main result of this section, Proposition \ref{BadPrimes: P}, holds quite generally; that is, it can be applied to polynomials that are neither \qhns, nor have an isolated singularity.   Moreover, the proof of this result is elementary, and is based upon the fact that the base $p$ expansion of an $F$-pure threshold must satisfy certain rigid conditions, as was observed in \cite{BMS2009, Singularities}}

\begin{theoremC}[{cf.}\ Proposition \ref{BadPrimes: P}]
Let $\fQ$ denote \emph{any} polynomial over $\Q$, and write $\lct{\fQ} =\frac{a}{b}$ in lowest terms.  If $a \neq 1$, then the set of primes for which $\lct{\fQ}$ is not an $F$-pure threshold (of \emph{any} polynomial) is infinite, and contains all primes $p$ such that $p^e \cdot a \equiv 1 \bmod b$ for some $e \geq 1$.  In particular, the density of the set of primes $\set{ p : \fpt{f_p} \neq \lct{\fQ}}$ is greater than or equal to $\frac{1}{\phi(b)}$, where $\phi$ denotes Euler's phi function.
\end{theoremC}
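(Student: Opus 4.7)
The plan is to derive a contradiction from a rigidity property of the base-$p$ expansion of an $F$-pure threshold (as isolated in \cite{BMS2009, Singularities}) after an elementary computation modulo $b$. The rigidity I intend to invoke asserts that the non-terminating base-$p$ expansion of any $F$-pure threshold $\mu$ satisfies $\{p^e \mu\} \geq \mu$ for every $e \geq 1$; equivalently, $\mu$ is the minimum element of its orbit under the shift map $x \mapsto \{px\}$. Intuitively this expresses that the digit sequence of $\fpt{f}$ is ``front loaded'' among its cyclic shifts, reflecting the maximality inherent in the non-containment $f^{\nu_f(p^e)} \notin \m^{[p^e]}$ defining $\nu_f(p^e) = \lfloor p^e \fpt{f} \rfloor$.

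Given this rigidity, the first claim is immediate. Setting $\lambda := a/b = \lct{\fQ}$ and writing $p^e a = 1 + kb$ for some positive integer $k$, a direct calculation yields $\{p^e \lambda\} = 1/b$. Because $a \geq 2$, we have $1/b < a/b = \lambda$, contradicting the rigidity; hence $\lambda$ cannot equal $\fpt{g}$ for any polynomial $g$ over $\FF_p$, so in particular $\fpt{\fp} \neq \lct{\fQ}$. For the density assertion, since $\gcd(a,b) = 1$ the inverse $a^{-1} \bmod b$ is well-defined, and every prime $p \equiv a^{-1} \bmod b$ satisfies the hypothesis with $e = 1$; by Dirichlet's theorem on primes in arithmetic progressions the set of such primes has natural density $1/\phi(b)$, so the bad set is infinite and has density at least $1/\phi(b)$.

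The main obstacle is correctly isolating and applying the rigidity: the precise digit condition for an $F$-pure threshold must translate into the inequality $\{p^e \mu\} \geq \mu$, or into an equivalent formulation strong enough to handle the shift computation $\{p^e \lambda\} = 1/b$. A minor secondary point is ensuring $\gcd(p, b) = 1$ so that these base-$p$ computations make sense, but this is automatic whenever $p^e a \equiv 1 \bmod b$. Everything else reduces to elementary arithmetic modulo $b$ together with Dirichlet's theorem, neither of which presents any real difficulty.
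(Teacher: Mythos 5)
Your proposal is correct and takes essentially the same route as the paper: the rigidity you invoke is precisely Proposition \ref{Truncation: P}(2) (quoted from \cite[Corollary 4.1]{Singularities}), and the paper likewise combines it with the observation that $p^e a\equiv 1\bmod b$ forces the tail of $\lambda=a/b$ at position $e+1$ to be $\tfrac{1}{b}<\tfrac{a}{b}$ (via Lemma \ref{explicittruncation: L}), finishing with Dirichlet's theorem exactly as you do. The only cosmetic difference is that the paper compares first digits ($\digit{\lambda}{1}$ must be the smallest digit of an $F$-pure threshold) rather than comparing $\lambda$ directly with its shifted tail $\lpr{\paren{a}{p^e}}{b}/b$; your formulation is marginally cleaner, as it avoids the ``$p\gg 0$'' caveat the paper needs for its digit comparison.
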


As a further application of our main theorem, we are {also} able to construct a large class of polynomials $\fQ$ over $\Q$ for which the density of the set $\set{ p : \fpt{\fp} \neq \lct{\fQ}}$ is larger than any prescribed bound between zero and one.

\begin{theoremD}[{cf.}\ Example \ref{LargeClassBadPrimes: E}] 
For every $\varepsilon > 0$, there exists an integer $n$ with the following property:  If $\fQ \in \Q[x_1,\ldots, x_{n-1}]$ is homogeneous (under the standard grading) of degree $n$ with an isolated singularity at the origin, then the density of the set of primes $\set{ p : \fpt{f_p} \neq  \lct{\fQ}}$ 
is greater than $1 - \varepsilon$.
\end{theoremD}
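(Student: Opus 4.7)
The plan is to combine Theorem C (specifically, the residue class criterion in Proposition \ref{BadPrimes: P}) with Dirichlet's theorem on primes in arithmetic progressions, applied to a well-chosen prime value of $n$. Since $\fQ$ is standard-graded homogeneous of degree $n$ in $n-1$ variables with an isolated singularity at the origin, the standard formula for the log canonical threshold of such a polynomial (cf.\ the discussion following Theorem A, generalized to $n-1$ variables) gives
\[
\lct{\fQ} = \frac{\qdeg(x_1 \cdots x_{n-1})}{\qdeg \fQ} = \frac{n-1}{n},
\]
which is already in lowest terms since $\gcd(n-1, n) = 1$. Provided $n \geq 3$, we have $a = n-1 \neq 1$, so Proposition \ref{BadPrimes: P} guarantees that every prime $p$ satisfying $p^e (n-1) \equiv 1 \pmod{n}$ for some $e \geq 1$ lies in the bad set $\{p : \fpt{\fp} \neq \lct{\fQ}\}$. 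Since $n - 1 \equiv -1 \pmod{n}$, this condition simplifies to $p^e \equiv -1 \pmod{n}$, equivalently, $-1 \in \langle p \rangle$ inside $(\Z/n\Z)^*$.

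Next I refine the crude bound $1/\phi(b)$ from Theorem C by summing over \emph{all} relevant residue classes. By Dirichlet's theorem, each class $r \in (\Z/n\Z)^*$ accounts for a set of primes of density $1/\phi(n)$, so the density of bad primes is at least $N/\phi(n)$, where
\[
N = \bigl| \{\, r \in (\Z/n\Z)^* : -1 \in \langle r \rangle \,\} \bigr|.
\]
I restrict attention to $n$ prime, so that $(\Z/n\Z)^*$ is cyclic of order $n-1$; in this cyclic group $-1$ is the unique element of order $2$, whence $-1 \in \langle r \rangle$ if and only if $r$ has even order. Writing $n - 1 = 2^k m$ with $m$ odd, the elements of odd order form the unique subgroup of order $m$, so the elements of even order number $N = (n-1) - m = m(2^k - 1)$, giving $N/\phi(n) = 1 - 2^{-k}$.

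To conclude, given $\varepsilon > 0$ choose $k$ large enough that $2^{-k} < \varepsilon$, and then invoke Dirichlet's theorem once more to select a prime $n \equiv 1 \pmod{2^k}$ (in particular $n \geq 3$). With this choice of $n$, the density of bad primes for any $\fQ$ meeting the hypotheses of Theorem D is at least $1 - 2^{-k} > 1 - \varepsilon$. The main obstacle is not a deep one but a bookkeeping one: one must observe that Theorem C's stated bound $1/\phi(b)$ grossly underestimates the bad density here and upgrade it to $N/\phi(b)$ by collecting every residue class for which the criterion of Proposition \ref{BadPrimes: P} applies; after that, the elementary cyclic-group count delivers the desired density.
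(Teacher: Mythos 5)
Your proof is correct, but it follows a genuinely different route from the paper's. The paper proves Theorem D via Example \ref{LargeClassBadPrimes: E}, which rests on Corollary \ref{aCY: C}(1): since $\qdeg \fQ = \sum \qdeg x_i + 1$, that corollary shows $\fpt{\fp} = 1 - \tfrac1n$ forces $p \equiv 1 \bmod n$, so the bad set contains \emph{all} primes $p \not\equiv 1 \bmod n$ and has density at least $1 - \tfrac{1}{\phi(n)}$; any $n$ with $\phi(n) > 1/\varepsilon$ then works. You instead invoke only the elementary digit-comparison criterion of Proposition \ref{BadPrimes: P} with $\lct{\fQ} = \tfrac{n-1}{n}$ (correctly computed from Theorem \ref{DifferenceBounds: T}), which puts into the bad set exactly those residue classes $r$ with $-1 \in \langle r \rangle$ in $(\Z/n\Z)^*$; your cyclic-group count for $n$ prime with $n-1 = 2^k m$ gives density at least $1 - 2^{-k}$, and Dirichlet supplies a prime $n \equiv 1 \bmod 2^k$. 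Both arguments are sound. The trade-off: the paper's inclusion is strictly larger (your criterion misses the odd-order classes other than $1$, e.g.\ it yields only density $\tfrac12$ when $n-1 \equiv 2 \bmod 4$, versus the paper's $1 - \tfrac{1}{\phi(n)}$), and the paper's choice of $n$ is essentially unrestricted; on the other hand, your disqualification step bypasses Corollary \ref{aCY: C} and the fine structure of Theorem \ref{Main: T} entirely, using only Proposition \ref{Truncation: P} via Theorem C, at the cost of a more delicate choice of $n$. One cosmetic point: Proposition \ref{BadPrimes: P}'s proof actually establishes the digit inequality only for $p \gg 0$ (concretely $p > b$), but discarding finitely many primes does not affect the density, so your conclusion stands.
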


The remaining applications deal with another connection between $F$-pure and log canonical thresholds:  Motivated by results in characteristic zero, it was conjectured in \cite[Conjecture 4.4]{BMS2009} that the set of \emph{all} $F$-pure thresholds of polynomials in a (fixed) polynomial ring over a field of characteristic $p>0$ satisfies the ascending chain condition (ACC), i.e., contains no strictly increasing sequences.  In Proposition \ref{ACC: P}, we prove that a restricted set of $F$-pure thresholds satisfies ACC.  Though the characteristic zero analog of Proposition \ref{ACC: P} (that is, the statement obtained by replacing ``$\mathbb{F}_p$'' with ``$\Q$'' and ``$F$-pure threshold'' with ``log canonical threshold," as appropriate) is obvious, our result relies strongly on the description of $F$-pure thresholds given in Theorem \ref{Main: T}.

Finally, as detailed in \cite[Remark 4.5]{BMS2009}, the ACC conjecture for $F$-pure thresholds predicts that for any polynomial $f \in \mathbb{F}_p[x_1, \cdots, x_n]$, there exists an integer $N$ (which may depend on $f$) such that $\fpt{f} \geq \fpt{f+g}$ for all $g \in (x_1, \cdots, x_n)^N$.  In our final application, we are able to confirm this prediction in the following special case.

\begin{theoremE}[{cf}.\ Proposition \ref{madicusc: P}]
Suppose that  $f \in \mathbb{F}_p[x_1, \cdots, x_n]$ is \qh under some $\NN$-grading such that $\sqrt{\jac{f}} =(x_1, \ldots, x_n)$ and ${\qdeg f}\geq {\sum \qdeg x_i}$.  Then $\fpt{f} = \fpt{f + g}$ for each $g\in(x_1, \cdots, x_n)^{n \qdeg f - \sum \qdeg x_i+1}$.
\end{theoremE}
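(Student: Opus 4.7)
The plan is to exploit the graded decomposition of $(f+g)^r$ under the given $\N$-grading. Set $d = \qdeg f$, $\omega_i = \qdeg x_i$, $W = \sum_i \omega_i$, and $M = nd - W + 1$. Since each $\omega_i \geq 1$, every monomial of $g \in \m^M$ has $\qdeg \geq M$, and the hypothesis $d \geq W$ (with $n \geq 2$) gives $M > d$. Expanding $(f+g)^r = f^r + \sum_{k=1}^r \binom{r}{k} f^{r-k} g^k$, every term with $k \geq 1$ has monomials of $\qdeg$ at least $(r-k)d + kM = rd + k(M-d) > rd$. Thus the $\qdeg$-$rd$ component of $(f+g)^r$ is exactly $f^r$, and all corrections sit in strictly higher $\qdeg$. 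Throughout I use $\nu_h(p^e) := \max\{r : h^r \notin \m^{[p^e]}\}$, so that $\fpt{h} = \lim_{e \to \infty} \nu_h(p^e)/p^e$.

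For $\fpt{f+g} \geq \fpt{f}$: if $f^r \notin \m^{[p^e]}$, there is a monomial $x^\alpha$ of $f^r$ with $\alpha_i < p^e$ for all $i$. Because distinct $\qdeg$-homogeneous components of $(f+g)^r$ share no common monomials, $x^\alpha$ persists in $(f+g)^r$ with its original nonzero coefficient; hence $(f+g)^r \notin \m^{[p^e]}$, giving $\nu_{f+g}(p^e) \geq \nu_f(p^e)$ for every $e$ and $\fpt{f+g} \geq \fpt{f}$.

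For the reverse, suppose $(f+g)^r \notin \m^{[p^e]}$ for some $r > \nu_f(p^e)$. A witnessing monomial $x^\gamma = x^\alpha x^\beta$ must come from a term $f^{r-k} g^k$ with $k \geq 1$. From $\alpha_i \leq \gamma_i < p^e$ we see $f^{r-k} \notin \m^{[p^e]}$, so $r - k \leq \nu_f(p^e)$; combined with the $\qdeg$ constraint $(r-k)d + kM \leq \sum_i \gamma_i \omega_i \leq (p^e - 1) W$, this gives
\[ r M \leq (p^e - 1) W + \nu_f(p^e) (M - d). \]
Dividing by $p^e$ and taking $e \to \infty$ yields $\fpt{f+g} \leq \fpt{f} + (W - d \fpt{f})/M$, which already settles the case $\fpt{f} = W/d$ since the right-hand side collapses to $\fpt{f}$.

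The main obstacle is the remaining case $\fpt{f} < W/d$, where the bound above is strictly weaker than $\fpt{f+g} \leq \fpt{f}$. To close the gap, I would combine the estimate with the explicit form of $\fpt{f}$ from Theorem \ref{Main: T}, writing $\fpt{f} = W/d - s/(p^L d)$ for a specific $L \leq \order{p}{d}$ and residue $s$. A key structural input should be that the precise choice $M = nd - W + 1$ forces $g$ to lie inside $\jac{f}$, because the Milnor algebra $R/\jac{f}$ of a graded isolated singularity has socle in bounded $\qdeg$. Using the identity $r \cdot f^{r-1} \partial_i f = \partial_i(f^r)$, one can hope to show that the leading linear correction $r f^{r-1} g$ in the expansion of $(f+g)^r$ is ``tangential'' to $f^r$ and contributes no new witness modulo $\m^{[p^e]}$; iterating on the higher-order terms $f^{r-k} g^k$ (which live in successively deeper powers of $\jac{f}$) together with the base-$p$ digit structure predicted by Theorem \ref{Main: T} should then force $\nu_{f+g}(p^e) = \nu_f(p^e)$ for all $e$, completing the proof.
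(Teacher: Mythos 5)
Your argument for $\fpt{f+g}\geq\fpt{f}$, and your limiting argument settling the case $\fpt{f}=\lambda:=\frac{\sum\qdeg x_i}{\qdeg f}$, are sound and essentially parallel the paper's Propositions \ref{fptf<fptfg: P} and \ref{higherdegreefpt: P}. The genuine gap is the main case $\fpt{f}<\lambda$, which you leave as a speculative sketch. The route you propose there --- observing that $g\in\jac{f}$ because the Milnor algebra has bounded socle degree, and hoping that the corrections $f^{r-k}g^k$ are ``tangential'' via $\partial_i(f^r)=rf^{r-1}\partial_i f$ --- is not developed and is not the mechanism that works: nothing in the outline explains why membership of $g$ in $\jac{f}$ would prevent $f^{r-k}g^k$ from contributing a new witness modulo $\bracket{\m}{e}$, and no argument is given for the claimed iteration.

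The information you discarded is exactly what closes the gap: by passing to the limit $e\to\infty$ you destroyed the quantitative content of Theorem \ref{Main: T}. Instead, work at the single finite level $L$ for which $p^{L}\cdot\fpt{f}\in\NN$. By Lemma \ref{fptfg<fptf: L} it suffices to show $(f+g)^{p^{L}\fpt{f}}\in\bracket{\m}{L}$, and Theorem \ref{Main: T} supplies the crucial bound $\fpt{f}=\tr{\lambda}{L}-E/p^{L}\geq\lambda-(n-1)/p^{L}$, since $0\leq E\leq n-2$. Now your own displayed inequality, specialized to $e=L$ and $r=p^{L}\fpt{f}$ (so that $\nu_f(p^{L})=p^{L}\tr{\fpt{f}}{L}=r-1$, and the witnessing monomial must come from some $k\geq 1$), reads $rM\leq(p^{L}-1)W+(r-1)(M-d)$, which simplifies to $rd\leq p^{L}W-(n-1)d-1$, i.e.\ $p^{L}\fpt{f}<p^{L}\lambda-(n-1)$, contradicting the crucial bound. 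This is precisely the paper's degree count (together with the observation that $(r-k)d+kM$ is nondecreasing in $k$ because $M\geq d$, which follows from $\qdeg f\geq\sum\qdeg x_i$). So your estimate is the right one; you only needed to apply it at the one exponent $L$ supplied by Theorem \ref{Main: T}, where the deficit $\lambda-\fpt{f}$ is at most $(n-1)/p^{L}$, rather than asymptotically, where that deficit persists as the constant $(W-d\fpt{f})/M$.
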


\subsection*{Notation}  Throughout this article, $p$ denotes a prime number and $\mathbb{F}_p$ denotes the field with $p$ elements.  For every ideal $I$ of a ring of characteristic $p>0$, and every $e \geq 1$, $\bracket{I}{e}$ denotes the $e^{\th}$ Frobenius power of $I$, the ideal generated by the set $\set{ g^{p^e} : g \in I}$.  
For a real number $a$, $\lceil a \rceil$ (respectively, $\lfloor a \rfloor$) denotes the least integer that is greater than or equal to (respectively, greatest integer less or equal to) $a$. 

\subsection*{Acknowledgements}  
The authors are indebted to Bhargav Bhatt and Anurag Singh; the latter shared ideas on their joint work during the Midwest Commutative Algebra and Geometry Conference at Purdue University in 2011 that would eventually form the foundation of our approach.
We would also like to thank Benjamin Weiss and Karen Smith for their comments on an earlier draft.  The first author gratefully acknowledges support from the Ford Foundation (FF) through a FF Postdoctoral Fellowship.  The second author thanks the National Council of Science and Technology (CONACyT) of Mexico for support through Grant 210916.   The fourth author was partially supported by NSF grants DMS \#1247354 and DMS \#1068946, and a Nebraska EPSCoR First Award. This collaboration began during visits supported by a travel grant from the AMS Mathematical Research Communities 2010 Commutative Algebra program.
Finally, much of the authors' collaborations took place at the University of Michigan, the University of Minnesota, and the Mathematical Sciences Research Institute; we thank these institutions for their hospitality.

\section{Basics of base $p$ expansions}

\begin{definition} 
\label{Expansion: D}  Given $\alpha \in (0,1]$, there exist unique integers $\digit{\alpha}{e}$ for every $e \geq 1$ such that 
$0 \leq \digit{\alpha}{e} \leq p-1$, $\alpha = \sum_{e \geq 1} \digit{\alpha}{e}\cdot p^{-e}$, and such that the integers $\digit{\alpha}{e}$ are not all eventually zero.  We call $\digit{\alpha}{e}$ the {$e^{\th}$ digit of $\alpha$} (base $p$), and we call the expression $\alpha = \sum_{e \geq 1} \digit{\alpha}{e} \cdot p^{-e}$ the {non-terminating expansion of $\alpha$} (base $p$). 
\end{definition}

\begin{definition}
Let $\alpha \in (0,1]$, and fix $e \geq 1$. We call $\tr{\alpha}{e}  :=  \digit{\alpha}{1} \cdot p^{-1} + \cdots + \digit{\alpha}{e} \cdot p^{-e}$ the {$e^{\th}$ truncation of $\alpha$} (base $p$).  We adopt the convention that $\tr{\alpha}{0} = 0$ and $\tr{\alpha}{\infty} = \alpha$.
\end{definition}

\begin{notation}
We adopt notation analogous to the standard decimal notation, using $``:"$ to distinguish between consecutive digits.  For example, we often write $\tr{\alpha}{e} = . \ \digit{\alpha}{1} : \digit{\alpha}{2} : \cdots : \digit{\alpha}{e} \ (\base p)$.
\end{notation}

\begin{convention}
\label{lpr: Conv}
Given a natural number $b > 0$ and an integer $m$, $\lpr{m}{b}$ denotes the least \emph{positive} residue of $m$ modulo $b$.  In particular, we have that $1 \leq \lpr{m}{b} \leq b$ for all $m \in \ZZ$.  
 Moreover, if $p$ and $b$ are relatively prime, $\order{p}{b} = \min \set{ k \geq 1: \lpr{p^k}{b} = 1}$, which we call the \emph{order of $p$ modulo $b$}.  In particular, note that $\order{p}{1} = 1$. 
\end{convention}

\begin{lemma}
\label{explicittruncation: L}
Fix $\lambda \in (0,1] \cap \Q$.  If we write $\lambda = \frac{a}{b}$, not necessarily in lowest terms, then \[ \digit{\lambda}{e} = \frac{ \lpr{ \paren{a}{p^{e-1}} }{b} \cdot p - \lpr{\paren{a}{p^e} }{b}}{b} \ \text{ and }  \
\tr{\lambda}{e} = \lambda - \frac{\lpr{\paren{a}{p^e}}{b}}{b p^e}.\] 
Note that it is important to keep in mind Convention \ref{lpr: Conv} when interpreting these identities.
\end{lemma}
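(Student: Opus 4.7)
The plan is to establish the truncation identity first, then derive the digit identity from it via the recursion $\digit{\lambda}{e} = p^e \cdot (\tr{\lambda}{e} - \tr{\lambda}{e-1})$, which follows directly from the definition of $\tr{\lambda}{e}$.

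To prove that $\tr{\lambda}{e} = \lambda - \frac{\lpr{ap^e}{b}}{bp^e}$, I would start by noting that by construction $\tr{\lambda}{e} = N/p^e$ for some nonnegative integer $N$, so the difference $\lambda - \tr{\lambda}{e}$ can be written as $M/(bp^e)$ for some integer $M$ satisfying $M \equiv ap^e \pmod{b}$. The key is then to pin down $M$ as \emph{the} least positive residue $\lpr{ap^e}{b}$, which amounts to showing $0 < M \leq b$, i.e., that $0 < \lambda - \tr{\lambda}{e} \leq p^{-e}$. Writing out the tail
\[
\lambda - \tr{\lambda}{e} = \sum_{i \geq e+1} \digit{\lambda}{i} \cdot p^{-i},
\]
the upper bound $\leq p^{-e}$ is immediate from $0 \leq \digit{\lambda}{i} \leq p-1$ and a geometric series estimate, while the strict positivity uses exactly the non-terminating condition of Definition~\ref{Expansion: D}, namely that the digits $\digit{\lambda}{i}$ for $i > e$ are not all zero. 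This is the place where the convention $1 \leq \lpr{m}{b} \leq b$ (rather than $0 \leq \lpr{m}{b} \leq b-1$) is crucial: when $\lambda$ has a ``terminating'' base-$p$ expansion, the tail equals exactly $p^{-e}$ for large $e$, forcing $M = b$, which is allowed under Convention~\ref{lpr: Conv}.

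With the truncation identity in hand, the digit identity falls out by subtraction:
\[
\digit{\lambda}{e} = p^e\bigl(\tr{\lambda}{e} - \tr{\lambda}{e-1}\bigr) = p^e\left( \frac{\lpr{ap^{e-1}}{b}}{bp^{e-1}} - \frac{\lpr{ap^e}{b}}{bp^e} \right) = \frac{p \cdot \lpr{ap^{e-1}}{b} - \lpr{ap^e}{b}}{b},
\]
where the case $e=1$ is handled by the convention $\tr{\lambda}{0} = 0$ together with $\lpr{a \cdot p^0}{b} = \lpr{a}{b}$, which equals $a$ when $a \leq b$ (the case that matters since $\lambda \in (0,1]$).

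The only real obstacle is the strict positivity check $\lambda - \tr{\lambda}{e} > 0$, which needs the non-terminating convention; everything else is a straightforward bookkeeping computation. I would also note at the end that the formulas make sense even when $a/b$ is not in lowest terms, since both numerator and denominator scale correctly under $(a,b) \mapsto (ka, kb)$.
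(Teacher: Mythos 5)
Your proposal is correct and follows essentially the same route as the paper: both isolate $\delta = \lambda - \tr{\lambda}{e}$, use $0 < \delta \leq p^{-e}$ (with strict positivity coming from the non-terminating convention) to identify $bp^e\delta$ as the least positive residue of $ap^e$ modulo $b$, and then obtain the digit formula from $\digit{\lambda}{e} = p^e(\tr{\lambda}{e} - \tr{\lambda}{e-1})$. Your explicit justification of the bounds on the tail is a slightly more detailed write-up of what the paper asserts in one line, but the argument is the same.
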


\begin{proof}  Since $\digit{\lambda}{e} = p^e (\tr{\lambda}{e} - \tr{\lambda}{e-1})$, the first identity follows from the second. Setting $\delta = \lambda - \tr{\lambda}{e}$ and multiplying both sides of the equality $\frac{a}{b} = \lambda = \tr{\lambda}{e} + \delta$ by $b p^e$ shows that
\[ap^e = b p^e \tr{\lambda}{e} + b p^e \delta.\]  As $0 < \delta \leq p^{-e}$ and $p^e \tr{\lambda}{e} \in \NN$, it follows that $b p^e \delta$ is the least \emph{positive} residue of $a p^e$ modulo $b$.  Finally, substituting $\delta = \lambda - \tr{\lambda}{e}$ into $b p^e \delta = \lpr{\paren{a}{p^e}}{b}$ establishes the second identity.
\end{proof}

We gather some of the important basic properties of base $p$ expansions below.  

\begin{lemma}  
\label{BasicProperties: L}
Fix $\alpha$ and $\beta$ in $[0,1]$.
\begin{enumerate}
\item $\alpha \leq \beta$ if and only if $\tr{\alpha}{e} \leq \tr{\beta}{e}$ for all $e \geq 1$; if $\alpha < \beta$, then these inequalities are strict for $e \gg 0$.
\item If $(p^s-1) \cdot  \alpha \in \N$, then the base $p$ expansion of $\alpha$ is periodic, with period dividing $s$. In particular, if $\lambda = \frac{a}{b}$ with $p \nmid b$, then the base $p$ expansion of $\lambda$ is periodic with period equal to $\order{p}{b}$.
\item Suppose $\lambda = \frac{a}{b}$ with $p \nmid b$.  If $s= \order{p}{b}$, then for all $k \geq 1$, $p^{ks} \cdot \tr{\lambda}{ks} = (p^{ks} - 1) \cdot \lambda$. \end{enumerate}
\end{lemma}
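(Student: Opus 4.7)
The plan is to handle the three parts of Lemma \ref{BasicProperties: L} in order, since each is essentially an elementary calculation once one has the right reformulation of the truncation.

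For part (1), I would first record the characterization that $p^e \tr{\alpha}{e}$ is the largest integer strictly less than $p^e \alpha$ (the ``strictly'' reflects the non-terminating convention in Definition \ref{Expansion: D}, which forces $\tr{\alpha}{e} < \alpha$). Monotonicity of $\alpha \mapsto \tr{\alpha}{e}$ is then immediate: if $\alpha \leq \beta$, then any integer strictly less than $p^e\alpha$ is strictly less than $p^e\beta$, so $p^e\tr{\alpha}{e} \leq p^e \tr{\beta}{e}$. The converse direction follows by taking $e \to \infty$, since $\tr{\alpha}{e} \to \alpha$ and $\tr{\beta}{e} \to \beta$. For the strict refinement, I would use the elementary bound $0 < \alpha - \tr{\alpha}{e} \leq p^{-e}$ coming from summing $\sum_{i>e}(p-1)p^{-i}$; once $p^{-e} < \beta - \alpha$, we get $\tr{\beta}{e} \geq \beta - p^{-e} > \alpha > \tr{\alpha}{e}$.

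For part (2), writing $\alpha = N/(p^s - 1)$ with $0 < N \leq p^s - 1$ and expressing $N = \sum_{i=1}^s n_i p^{s-i}$ in base $p$, I would compute
\[
\alpha \;=\; N \sum_{k=1}^{\infty} p^{-ks} \;=\; \sum_{k=1}^\infty \sum_{i=1}^s n_i \, p^{-((k-1)s + i)},
\]
so that the candidate digits $\digit{\alpha}{(k-1)s+i}$ equal $n_i$ for every $k \geq 1$, confirming periodicity with period dividing $s$. The small thing to check is that this is actually the \emph{non-terminating} expansion: when $N < p^s - 1$, some $n_i < p-1$, so infinitely many digits are strictly less than $p-1$, and in fact infinitely many are nonzero because $N > 0$; when $N = p^s-1$ we recover $\alpha = 1 = 0.(p{-}1)(p{-}1)\cdots$. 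The specialization to $\lambda = a/b$ with $p \nmid b$ is then the observation that $b \mid p^{\order{p}{b}} - 1$, together with the standard fact (provable by the same monotonicity argument as part (1)) that the minimal period of $\lambda$ equals $\order{p}{b}$.

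For part (3), I would simply plug $e = ks$ into the second identity of Lemma \ref{explicittruncation: L}:
\[
\tr{\lambda}{ks} \;=\; \lambda - \frac{\lpr{\paren{a}{p^{ks}}}{b}}{b\, p^{ks}}.
\]
Since $s = \order{p}{b}$, we have $p^{ks} \equiv 1 \pmod b$, hence $a p^{ks} \equiv a \pmod b$, so $\lpr{\paren{a}{p^{ks}}}{b} = \lpr{a}{b}$. Under Convention \ref{lpr: Conv}, $\lpr{a}{b} = a$ when $a < b$ and $\lpr{a}{b} = b$ when $a = b$; in both cases $\lpr{a}{b}/b = a/b = \lambda$. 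Multiplying through by $p^{ks}$ yields $p^{ks}\tr{\lambda}{ks} = p^{ks}\lambda - \lambda = (p^{ks}-1)\lambda$, as required.

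I do not anticipate a real obstacle here; the only mildly delicate point is the interplay between the non-terminating convention and the edge cases $\alpha = 1$ or $a = b$ in parts (2) and (3), and the need in part (1) to state the truncation as ``largest integer strictly less than $p^e\alpha$'' rather than $\lfloor p^e\alpha\rfloor$.
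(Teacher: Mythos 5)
Your proof is correct; it is far more detailed than the paper's, which disposes of the lemma in one line by citing Lemma \ref{explicittruncation: L}, and it differs in route at two points. For (2), the paper's intended argument is that the digit formula of Lemma \ref{explicittruncation: L} shows $\digit{\lambda}{e}$ depends on $e$ only through $p^{e-1} \bmod b$, so periodicity is immediate; you instead expand $N/(p^s-1)$ as a geometric series and read off the digits, which is self-contained and, usefully, makes explicit why the resulting expansion is the non-terminating one. For (3), the paper deduces the identity from (2), whereas you substitute $e=ks$ into Lemma \ref{explicittruncation: L} and use $\lpr{\paren{a}{p^{ks}}}{b}=\lpr{a}{b}$ together with $\lpr{a}{b}/b=\lambda$ (correctly covering the edge case $a=b$); both work, and your observation that $p^{e}\tr{\alpha}{e}$ is the largest integer \emph{strictly} less than $p^{e}\alpha$ is exactly the right way to package the non-terminating convention for (1). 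The one flawed justification is your parenthetical for the claim that the \emph{minimal} period equals $\order{p}{b}$: that is not ``the same monotonicity argument as part (1).'' What one actually does is note that if the digits have period $t$, then $\lambda=\tr{\lambda}{t}+p^{-t}\lambda$, hence $(p^{t}-1)\lambda\in\N$, i.e.\ $b\mid a(p^{t}-1)$, which for $a/b$ in lowest terms (a hypothesis the statement omits but clearly intends, as $1/2=2/4$ in base $3$ shows) forces $\order{p}{b}\mid t$. Since this fact is standard and the paper does not prove it either, I would call this a blemish in the stated reason rather than a genuine gap.
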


\begin{proof} 
(1) follows by definition; (2) follows immediately from Lemma \ref{explicittruncation: L}; (3) follows from (2).
\end{proof}

\begin{lemma}
\label{Difference: L}  Consider $\alpha < \beta$ in $(0,1]$, and set $\Delta_e := p^e \tr{\beta}{e} - p^{e} \tr{\alpha}{e}$.  Note that, by Lemma \ref{BasicProperties: L}, the integer $\ell= \min \set{ e : \Delta_e \geq 1}$ is well-defined.  Moreover, the following hold:
\begin{enumerate}
 \item   The sequence $\set{ \Delta_e }_{e \geq 1}$ is non-negative, non-decreasing, and unbounded.
\item Suppose $\beta = \frac{a}{b}$ with $p \nmid b$.  If $s = \order{p}{b}$, then $\Delta_{\ell + s + k} \geq p^k + 1$ for every $k \geq 0$.
\end{enumerate}
\end{lemma}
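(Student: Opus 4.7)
My approach for part~(1) rests on the recursion $\Delta_{e+1} = p\,\Delta_e + \digit{\beta}{e+1} - \digit{\alpha}{e+1}$, obtained by expanding $\tr{\beta}{e+1} = \tr{\beta}{e} + \digit{\beta}{e+1} p^{-(e+1)}$ and similarly for $\alpha$. Non-negativity of $\Delta_e$ is immediate from Lemma~\ref{BasicProperties: L}(1). Monotonicity follows by rewriting the recursion as $\Delta_{e+1} - \Delta_e = (p-1)\Delta_e + (\digit{\beta}{e+1} - \digit{\alpha}{e+1})$: when $\Delta_e \geq 1$, the term $(p-1)\Delta_e \geq p-1$ absorbs the digit difference $\digit{\alpha}{e+1} - \digit{\beta}{e+1} \leq p-1$; when $\Delta_e = 0$, the already-established non-negativity of $\Delta_{e+1}$ yields $\Delta_{e+1} \geq \Delta_e$. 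For unboundedness, I would write $\Delta_e = p^e(\beta - \alpha) - (r_e^\beta - r_e^\alpha)$, where $r_e^\beta, r_e^\alpha \in (0,1]$ denote the tails $p^e(\beta - \tr{\beta}{e})$ and $p^e(\alpha - \tr{\alpha}{e})$, so that $\Delta_e \geq p^e(\beta - \alpha) - 1 \to \infty$.

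For part~(2), my strategy is to prove the base case $k = 0$ first and then bootstrap to arbitrary $k$. For the base case $\Delta_{\ell+s} \geq 2$, I would argue by contradiction: if $\Delta_{\ell+s} \leq 1$, then monotonicity combined with the definition of $\ell$ forces $\Delta_\ell = \Delta_{\ell+1} = \cdots = \Delta_{\ell+s} = 1$. Plugging each equality $\Delta_{e+1} = \Delta_e = 1$ into the recursion yields $\digit{\alpha}{e+1} - \digit{\beta}{e+1} = p-1$, and since digits lie in $[0, p-1]$ this forces $\digit{\beta}{e+1} = 0$ for $e = \ell, \ldots, \ell+s-1$. Invoking Lemma~\ref{BasicProperties: L}(2), the base-$p$ expansion of $\beta$ is periodic with period $s$, so $\digit{\beta}{j} = 0$ for every $j > \ell$, contradicting the non-terminating convention. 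For the bootstrap, the iterated recursion
\[
\Delta_{\ell+s+k} = p^k \Delta_{\ell+s} + \sum_{i=1}^k p^{k-i}\bigl(\digit{\beta}{\ell+s+i} - \digit{\alpha}{\ell+s+i}\bigr),
\]
combined with $\Delta_{\ell+s} \geq 2$ and the elementary bound $\sum_{i=1}^k p^{k-i}(p-1) = p^k - 1$, yields $\Delta_{\ell+s+k} \geq 2p^k - (p^k - 1) = p^k + 1$.

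The main obstacle is the base case, which is the one place where the specific arithmetic of $\beta$---periodicity of its base-$p$ expansion combined with the non-terminating convention forbidding eventually-zero tails---is essential. Once the base case is in hand, the passage to arbitrary $k$ is purely mechanical, relying only on the iterated recursion and a geometric-sum estimate.
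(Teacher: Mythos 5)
Your proof is correct and follows essentially the same route as the paper's: the digit recursion for non-negativity and monotonicity, periodicity of the base $p$ expansion of $\beta$ for the base case $\Delta_{\ell+s}\geq 2$, and iteration of the recursion with a geometric-sum bound for general $k$. The only local variations are your direct estimate $\Delta_e \geq p^e(\beta-\alpha)-1$ for unboundedness (the paper instead uses that $\digit{\beta}{e}\neq 0$ for infinitely many $e$) and your contradiction argument forcing $s$ consecutive zero digits of $\beta$ (the paper argues directly that $\digit{\beta}{\ell}\geq 1$, hence $\digit{\beta}{\ell+s}=\digit{\beta}{\ell}\geq 1$, and adds this to $\Delta_{\ell+s-1}$); both variants are sound.
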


\begin{proof} 
We first observe that the following recursion holds.
\begin{equation} 
\label{recursion: e}
\Delta_{e+1} =  p  \cdot \Delta_e + \digit{\beta}{e+1} - \digit{\alpha}{e+1} \text{ for every } e \geq 0.
\end{equation}

Setting $e = \ell$ in \eqref{recursion: e} and noting that $\Delta_\ell \geq 1$ shows that 
\begin{align*}
\Delta_{\ell+1}  = p \cdot \Delta_{\ell} + \digit{\beta}{\ell+1} - \digit{\alpha}{\ell+1} & = (p-1) \cdot \Delta_{\ell} + \Delta_{\ell} + \digit{\beta}{\ell+1} - \digit{\alpha}{\ell+1} \\ & \geq (p-1) \cdot 1 + \Delta_{\ell} + \digit{\beta}{\ell+1} - \digit{\alpha}{\ell+1} \\ 
 & \geq \Delta_{\ell} + \digit{\beta}{\ell+1}.
\end{align*}
Furthermore, an induction on $e \geq \ell$ shows that
\begin{equation}
\label{unbounded: e}
\Delta_{e+1} \geq \Delta_e + \digit{\beta}{e+1} \text{ for every $e \geq \ell$.}
\end{equation}
Thus, $\set{ \Delta_e }_{e \geq 1}$ is non-decreasing, and as we consider non-terminating expansions, $\digit{\beta}{e} \neq 0$ for infinitely many $e$, so that \eqref{unbounded: e} also shows that $\Delta_{e+1} > \Delta_e$ for infinitely many $e$. We conclude that $\set{ \Delta_e}_{e \geq 1}$ is unbounded, and it remains to establish (2). 

By definition,  $\digit{\beta}{\ell} - \digit{\alpha}{\ell} = \Delta_{\ell} \geq 1$, and hence $\digit{\beta}{\ell} \geq 1$.  In fact, setting $s = \order{p}{b}$, Lemma \ref{BasicProperties: L} states that $\digit{\beta}{\ell + s} = \digit{\beta}{\ell} \geq 1$, and applying \eqref{unbounded: e} with $e=\ell + s-1$  then shows that
\[ \Delta_{\ell + s} \geq \Delta_{\ell + s -1 } + \digit{\beta}{\ell + s} \geq 2.\] 
Hence, (2) holds for $k=0$.  Utilizing  \eqref{recursion: e}, an induction on $k$ completes the proof.  
\end{proof}

\section{$F$-pure thresholds of \qh polynomials:  A discussion}
\label{fptDiscussion: Section}

We adopt the following convention from this point onward.

\begin{convention} \label{generalConvention: Con}
Throughout this article, $\LL$ will denote a field of characteristic $p>0$, and $\m$ will denote the ideal generated by the variables in $R=\LL[x_1, \cdots, x_n]$.  
\end{convention}

\begin{definition}
\label{fptDef: D}
Consider a polynomial $f \in \m$, and for every $e \geq 1$, set  \[ \new{e} = \max \set{ N : f^N \notin \bracket{\m}{e}}.\] 
An important property of these integers is that $\set{ p^{-e} \cdot \new{e}}_{e \geq 1}$ is a non-decreasing sequence contained in the open unit interval \cite{MTW2005}.  Consequently, the limit  
\[ \fpt{f} : = \lim_{e \to \infty} \frac{\new{e}}{p^e}  \in (0,1] \] exists, and is called the \emph{$F$-pure threshold of $f$}.
\end{definition}

The following illustrates important properties of $F$-pure thresholds;  we refer the reader to \cite[Proposition 1.9]{MTW2005} or \cite[Key Lemma 3.1]{Singularities} for a proof of the first, and \cite[Corollary 4.1]{Singularities} for a proof of the second.

\begin{proposition}
\label{Truncation: P}
Consider a polynomial $f$ contained in $\m$.
\begin{enumerate}  
\item The base $p$ expansion of the $F$-pure threshold determines $\set{ \new{e}}_{e \geq 1}$; more precisely,   
\[\new{e} = p^e \cdot \tr{\fpt{f}}{e} \text{ for every $e \geq 1$}. \]   
\item The $F$-pure threshold is bounded above by the rational numbers determined by its trailing digits (base $p$);  more precisely, $\fpt{f}$ is less than or equal to \[ .\ \digit{\fpt{f}}{s} : \digit{\fpt{f}}{s+1} : \cdots : \digit{\fpt{f}}{s+k} : \cdots \ (\base p) \text{ for every $s \geq 1$}.\]  
\end{enumerate}
\end{proposition}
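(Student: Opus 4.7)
The plan is to treat each part in turn, relying throughout on the flatness of Frobenius over the polynomial ring $R$ (Kunz's theorem), which implies the bidirectional ``raising-to-power'' principle: for every $N \geq 0$ and $k \geq 0$, one has $f^N \in \bracket{\m}{e}$ if and only if $f^{N p^k} \in \bracket{\m}{e+k}$.

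For part (1), I first establish the recursion $p \cdot \new{e} \leq \new{e+1} \leq p \cdot \new{e} + (p-1)$. The lower bound is immediate from the principle above applied to $N = \new{e}$. For the upper bound, the freshman's dream applied to $f^{\new{e}+1} \in \bracket{\m}{e}$ gives $(f^{\new{e}+1})^p = f^{p \new{e}+p} \in \bracket{\m}{e+1}$. Writing $d_{e+1} := \new{e+1} - p \cdot \new{e} \in \{0, 1, \ldots, p-1\}$, induction yields $\new{e}/p^e = \sum_{i=1}^{e} d_i \cdot p^{-i}$, and passing to the limit shows $\fpt{f} = \sum_{i \geq 1} d_i \cdot p^{-i}$. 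What remains is to confirm this matches the \emph{non-terminating} expansion of $\fpt{f}$, equivalently, that $\new{e}/p^e < \fpt{f}$ strictly for every $e$. The only case in which this can fail is when $\fpt{f}$ is a $p$-adic rational $a/p^{e_0}$; I would rule this out by noting that the equality $\new{e_0}/p^{e_0} = \fpt{f}$ forces $\new{e_0+k} = p^k \new{e_0}$ for every $k \geq 0$, hence $f^{p^k \new{e_0}+1} \in \bracket{\m}{e_0+k}$ for all $k$, and derive a contradiction via a colon-ideal analysis combined with $\bigcap_e \bracket{\m}{e} = 0$.

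For part (2), I use part (1) to reformulate the claim as an integer inequality. Set $\alpha = .\ \digit{\fpt{f}}{s} : \digit{\fpt{f}}{s+1} : \cdots \ (\base p)$. A short re-indexing combined with part (1) gives
\[
p^e \cdot \tr{\alpha}{e} \;=\; \new{s+e-1} - p^e \cdot \new{s-1}.
\]
By Lemma \ref{BasicProperties: L}(1), proving $\fpt{f} \leq \alpha$ reduces to $\tr{\fpt{f}}{e} \leq \tr{\alpha}{e}$ for every $e \geq 1$, which using part (1) amounts to
\[
p^e \cdot \new{s-1} + \new{e} \;\leq\; \new{s+e-1}.
\]
I would prove this by exhibiting a single monomial of $f^{p^e \new{s-1} + \new{e}}$ that lies outside $\bracket{\m}{s+e-1}$. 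By definition, $f^{\new{s-1}}$ contains some monomial $c \cdot x^{\gamma}$ with every component of $\gamma$ strictly less than $p^{s-1}$; the freshman's dream then produces the monomial $c^{p^e} x^{p^e \gamma}$ in $f^{p^e \new{s-1}} = (f^{\new{s-1}})^{p^e}$, with exponents strictly less than $p^{s+e-1}$ and each divisible by $p^e$. Similarly, $f^{\new{e}}$ contains a monomial $c' x^{\delta}$ with components of $\delta$ strictly less than $p^e$. The cross-product $c^{p^e} c' x^{p^e \gamma + \delta}$ then contributes a monomial with exponents strictly less than $p^{s+e-1}$; crucially, because every exponent vector of $f^{p^e \new{s-1}}$ lies in $p^e \cdot \ZZ^n$, different cross-terms in the expansion of $f^{p^e \new{s-1}} \cdot f^{\new{e}}$ are separated by their residues modulo $p^e$, so no cancellation can occur. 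This monomial therefore survives, placing $f^{p^e \new{s-1} + \new{e}}$ outside $\bracket{\m}{s+e-1}$ and yielding the inequality.

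The main obstacle will be the non-terminating expansion subtlety in part (1): ruling out the possibility that the sequence $\{\new{e}/p^e\}$ reaches $\fpt{f}$ at a finite stage requires a genuinely structural argument about $f$, rather than the purely combinatorial bookkeeping that suffices elsewhere. The non-cancellation step in part (2), while technical, reduces---once monomials are separated onto the correct ``scales'' of multiples of $p^e$ versus remainders modulo $p^e$---to a standard freshman's dream observation.
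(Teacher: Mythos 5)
The paper itself does not prove this proposition (it cites \cite{MTW2005} and \cite{Singularities}), so your argument has to stand on its own. Part (1) does: the digit recursion $p\new{e}\le\new{e+1}\le p\new{e}+(p-1)$ is correct, and although you only sketch the exclusion of the terminating case, the ingredients you name do assemble into a complete argument --- from $f\cdot\(f^{\new{e_0}}\)^{p^k}\in\bracket{\m}{e_0+k}$ one gets $f\in\Colon{\bracket{\m}{e_0+k}}{(f^{\new{e_0}})^{p^k}}=\bracket{\Colon{\bracket{\m}{e_0}}{f^{\new{e_0}}}}{k}\subseteq\bracket{\m}{k}$ for every $k$ (colon commutes with Frobenius powers by flatness, and the colon ideal is proper and contains $\bracket{\m}{e_0}$, hence lies in $\m$), contradicting $\bigcap_k\bracket{\m}{k}=0$.

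The genuine gap is in part (2). Your reduction to the superadditivity $p^e\new{s-1}+\new{e}\le\new{s+e-1}$ is correct and is the standard route, but the non-cancellation claim used to prove it is false as stated. The cross-terms of $f^{p^e\new{s-1}}\cdot f^{\new{e}}$ are $x^{p^e\alpha+\beta}$ with $x^{\alpha}$ supporting $f^{\new{s-1}}$ and $x^{\beta}$ supporting $f^{\new{e}}$; two of them coincide whenever $\beta'=\beta+p^e\tau$ and $\alpha'=\alpha-\tau$ with $\tau\ne 0$. The residue modulo $p^e$ only records the class of $\beta$, and $f^{\new{e}}$ generally has many supporting monomials in the class of $\delta$ (you control only the one with $\delta<p^e$ componentwise), so the coefficient of $x^{p^e\gamma+\delta}$ is a sum $\sum_{\tau}c_{\gamma-\tau}^{p^e}c'_{\delta+p^e\tau}$ that can vanish. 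The standard repair keeps your grouping by residue classes but replaces monomial bookkeeping with a primary-ideal argument: write $f^{\new{e}}=\sum_{0\le b<p^e}x^bH_b$ with $H_b\in\LL[x_1^{p^e},\dots,x_n^{p^e}]$; the summands $x^b\,f^{p^e\new{s-1}}H_b$ of the product have pairwise disjoint supports, and for the class $b_0=\lpr{\delta}{p^e}$ (i.e.\ $b_0=\delta$) one has $H_{b_0}(0)\ne0$, so $H_{b_0}\notin\m$ is a nonzerodivisor modulo the $\m$-primary ideal $\bracket{\m}{s+e-1}$; since $x^{b_0}f^{p^e\new{s-1}}\notin\bracket{\m}{s+e-1}$ (its monomial $x^{p^e\gamma+b_0}$ has all exponents at most $p^{s+e-1}-1$, and here the freshman's-dream separation genuinely applies), the product stays outside as well. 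Equivalently, one composes splittings $\phi$ with $\phi(F^e_*f^{\new{e}})=1$. Without some such input beyond ``the exponents of $f^{p^e\new{s-1}}$ are multiples of $p^e$,'' the survival of your chosen monomial is not established.
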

 
\subsection{A discussion of the main results}  In this subsection, we gather the main results of this article.  Note that the proofs of these results appear in Section \ref{details: S}.

\begin{convention}
Given a polynomial $f$, we use $\jac{f}$ to denote the ideal of $R$ generated by the partial derivatives of $f$.  If $f$ is homogeneous under some $\NN$-grading on $R$,  each partial derivative $\df{i}$ of $f$ is also \qhns, and { if $\df{i}\neq 0$}, then $\qdeg \df{i} = \qdeg f - \qdeg x_i$.  Furthermore, if $p \nmid \qdeg f$, then Euler's relation \[ \qdeg f \cdot f = \sum \qdeg x_i \cdot x_i \cdot \df{i} \] shows that $f \in \jac{f}$.  Thus, if $p \nmid \qdeg(f)$ and $\LL$ is perfect, the Jacobian criterion states that $\sqrt{\jac{f}} = \m$ if and only if $f$ has an isolated singularity at the origin.
\end{convention} 

\begin{theorem}
\label{Main: T}
Fix an $\NN$-grading on $\LL[x_1, \cdots, x_n]$.  Consider a \qh polynomial $f$ with $\sqrt{\jac{f}} = \m$, and write  $\lambda:=\min \set{ \frac{ \sum \qdeg x_i}{\qdeg f}, 1} = \frac{a}{b}$ in lowest terms.
\begin{enumerate}
\item \label{MT1: p} If $\fpt{f} \neq \lambda$, then 
\begin{align*}  
\fpt{f} &= \lambda  - \( \frac{ \lpr{\paren{a}{p^L}}{b} + b E }{b p^L}\)\\ 
&= \tr{ \lambda }{L} - \frac{E}{p^L} 
\end{align*}  
for some pair $(L,E) \in \NN^2$ with $L \geq 1$ and $0 \leq E \leq n-1-\up{\frac{\lpr{\paren{a}{p^L}}{b} + a}{b}}$.
\item \label{MT2: p} If $p > (n-2) \cdot b$ and $p \nmid b$, then $1 \leq L \leq \order{p}{b}$; note that $\order{p}{1} = 1$.
\item \label{MT3: p}
If $p > (n-2) \cdot b$ and $p > b$,  then $ a < \lpr{ \paren{a}{p^e} }{b} \text{ for all } 1 \leq e \leq L-1.$
\item \label{MT4: p} If $p> (n-1) \cdot b$, then there exists a unique pair $(L,E)$ satisfying the conclusions of \eqref{MT1: p}.
\end{enumerate}
\end{theorem}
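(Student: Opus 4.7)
My plan begins with the basic upper bound $\fpt{f}\leq\lambda$. This follows from homogeneity: any monomial of $f^N$ lying outside $\bracket{\m}{e}$ has each exponent at most $p^e-1$, so its weighted degree, namely $Nd$ with $d=\qdeg f$, satisfies $Nd\leq (p^e-1)\sum_i\qdeg x_i$. Combined with the trivial estimate $\new{e}\leq p^e-1$, this yields $\new{e}/p^e\leq\lambda$ in the limit.

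Assume now $\fpt{f}<\lambda$. By Lemma \ref{BasicProperties: L}(1) and Proposition \ref{Truncation: P}(1), the integer
\[L:=\min\set{e\geq 1:\tr{\fpt{f}}{e}<\tr{\lambda}{e}}\]
is well-defined, and $E:=p^L\tr{\lambda}{L}-\new{L}$ is a positive integer. Lemma \ref{explicittruncation: L} then gives the truncation identity
\[\tr{\fpt{f}}{L}=\tr{\lambda}{L}-\frac{E}{p^L}=\lambda-\frac{\lpr{\paren{a}{p^L}}{b}+bE}{bp^L}.\]
The heart of the argument is to promote this identity from $\tr{\fpt{f}}{L}$ to $\fpt{f}$ itself, and to prove the bound on $E$. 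I would accomplish this by generalizing the Bhatt--Singh differentiation lemma to our $\NN$-graded setting: from $f^{\new{L}+1}\in\bracket{\m}{L}$ and the easy inclusion $\partial_i\bracket{\m}{L}\subseteq\bracket{\m}{L}$, one obtains $(\new{L}+1)\,f^{\new{L}}\df{i}\in\bracket{\m}{L}$, so $f^{\new{L}}\cdot\jac{f}\subseteq\bracket{\m}{L}$ whenever $p\nmid\new{L}+1$. Iterating with higher Hasse--Schmidt-type derivatives on the powers $f^{\new{L}+j}$, with a careful choice of $j$ so that the relevant divided-power coefficients are nonzero mod $p$, one upgrades to $f^{\new{L}}\cdot\m^M\subseteq\bracket{\m}{L}$ for an explicit $M$; the $\m$-primariness of $\jac{f}$ enters here to connect $\jac{f}^k$ to large powers of $\m$. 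A weighted degree count on the surviving monomials of $f^{\new{L}}$, combined with Proposition \ref{Truncation: P}(2) applied to the trailing digits of $\fpt{f}$, then simultaneously pins $\fpt{f}$ down to the claimed value and yields the bound $E\leq n-1-\up{(\lpr{\paren{a}{p^L}}{b}+a)/b}$. This establishes part (1).

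For part (2), the bound $L\leq\order{p}{b}$ uses the periodicity of the base $p$ expansion of $\lambda$: by Lemma \ref{BasicProperties: L}(3), the digits of $\lambda$ repeat with period $s=\order{p}{b}$. If $L>s$, the periodicity of the digits combined with the part (1) form of $\fpt{f}$ yields an admissible pair $(L',E')$ with $L'<L$, contradicting minimality; the hypothesis $p>(n-2)b$ enters to ensure the $E$-bound of part (1) is preserved under the reduction. Part (3) follows from the minimality of $L$ together with Proposition \ref{Truncation: P}(2): for $e<L$ we have $\tr{\fpt{f}}{e}=\tr{\lambda}{e}$, and the trailing-digit inequality combined with $p>b$ forces $\lpr{\paren{a}{p^e}}{b}>a$. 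For part (4), uniqueness of $(L,E)$ is a numerical check: two distinct admissible pairs would produce candidate $F$-pure thresholds separated by a multiple of $1/(bp^{\max(L,L')})$, but the bound on $E$ constrains this separation to less than $(n-1)/p^{\max(L,L')}$, so $p>(n-1)b$ rules out coincidence.

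The main obstacle is the generalization of the Bhatt--Singh differentiation argument to the non-standard graded setting, and in particular the delicate combinatorial bookkeeping required to extract the sharp bound $E\leq n-1-\up{(\lpr{\paren{a}{p^L}}{b}+a)/b}$ in part (1) rather than a cruder constant. Once that core estimate is in hand, the refinements in parts (2)--(4) reduce to arithmetic of base $p$ expansions already developed in Lemmas \ref{explicittruncation: L}--\ref{Difference: L}.
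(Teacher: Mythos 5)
Your identification of the key technical tool is right — the weighted-degree generalization of the Bhatt--Singh differentiation argument, i.e.\ the containment $\Colon{\bracket{\m}{e}}{\jac{f}} \setminus \bracket{\m}{e} \subseteq \Deg{R}{\geq (p^e+1)\sum \qdeg x_i - n\qdeg f}$ coming from the socle degree of $R/\jac{f}$ — but there is a genuine gap in how you assemble it, beginning with your choice of $(L,E)$. You take $L$ to be the \emph{first} index $\ell$ where $\tr{\fpt{f}}{e} < \tr{\lambda}{e}$ and $E = p^{L}\tr{\lambda}{L}-\new{L}$; with that choice the theorem's identity $\fpt{f}=\tr{\lambda}{L}-E/p^{L}$ would say $\fpt{f}=\tr{\fpt{f}}{L}$, which is impossible since truncations of a non-terminating expansion are strictly smaller than the number itself. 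The correct choice is $L=\max\set{e: \digit{\fpt{f}}{e}\neq p-1}$ and $E=\Delta_L-1$ where $\Delta_e:=p^e\tr{\lambda}{e}-p^e\tr{\fpt{f}}{e}$: the trailing string of digits $p-1$ contributes the extra $p^{-L}$, giving $\fpt{f}=\tr{\fpt{f}}{L}+p^{-L}=\tr{\lambda}{L}-(\Delta_L-1)/p^{L}$. These two indices coincide only under the hypothesis $p>(n-2)b$ of part (2), whereas part (1) must hold for all $p$. Relatedly, your "promotion" step via iterated Hasse--Schmidt derivatives is not needed and is not what makes the argument work. The actual mechanism is: whenever $\digit{\fpt{f}}{e}\neq p-1$ one has $p\nmid \new{e}+1$, so the first-derivative bound (Proposition \ref{nubounds: P}) applies at that $e$ and yields $\Delta_e\leq n-\up{(\lpr{\paren{a}{p^e}}{b}+a)/b}\leq n-1$; since $\set{\Delta_e}$ is unbounded (Lemma \ref{Difference: L}), the digits of $\fpt{f}$ must be $p-1$ for all $e\gg 0$, which is precisely why $L$ is well-defined and why the denominator of $\fpt{f}$ is a power of $p$. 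Your sketch never extracts this "eventually all digits are $p-1$" conclusion, which is the heart of part (1).

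Part (2) also needs more than you supply. Its content splits into two claims: first, that $L=\ell$ when $p>(n-2)b$ (proved via the digit lower bound $\digit{\lambda}{e}\geq (p-b)/b>n-3$, which contradicts $\digit{\lambda}{L}\leq n-3$ forced by $\Delta_{L-1}\geq 1$); second, that $\ell\leq\order{p}{b}$, which is Corollary \ref{boundsOnFirstDiff: C}. The latter is not a "contradiction with minimality": one must show that if the first $s=\order{p}{b}$ digits of $\fpt{f}$ agree with those of $\lambda$, then Proposition \ref{Truncation: P}(2) (the tail of $\fpt{f}$ bounds $\fpt{f}$ from above) together with the periodicity of $\lambda$'s expansion forces the next $s$ digits to agree as well, and iterating gives $\fpt{f}=\lambda$, a contradiction. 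Your sketches of parts (3) and (4) are closer to the mark, but both depend on the expansion $\fpt{f}= .\,\digit{\lambda}{1}:\cdots:\digit{\lambda}{L-1}:(\digit{\lambda}{L}-\Delta_L):\overline{p-1}$, which is only available once the corrected version of parts (1) and (2) is in place.
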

{{
We postpone the proof of Theorem  \ref{Main: T} to  Subsection \ref{mainProofs:  SS}.}}
The remainder of this subsection is focused on parsing the statement of Theorem \ref{Main: T}, and presenting some related results.  The reader interested in seeing examples should consult Section \ref{fptExamples: Section}.

\begin{remark}[Two points of view]
Each of the two descriptions of $\fpt{f}$ in Theorem \ref{Main: T}, which are equivalent by Lemma \ref{explicittruncation: L}, are useful in their own right. For example, the first description plays a key role in Section \ref{fptExamples: Section}.  
On the other hand, the second description makes it clear that either $\fpt{f} = \lambda$, or $\fpt{f}$ is a rational number whose denominator is a power of $p$,  and further, describes how ``far" $\fpt{f}$ is from being a truncation of $\lambda$;  these observations allow us to address the questions of Schwede and of the first author noted in the introduction.
\end{remark}

The second point of Theorem \ref{Main: T} also immediately gives a bound on the power of $p$ appearing in the denominator of $\fpt{f}$ whenever $\fpt{f} \neq \lambda$ and $p \gg 0$.  For emphasis, we record this bound in the following corollary.

\begin{corollary}
In the context of Theorem \ref{Main: T}, if $\fpt{f} \neq \lambda$, and both $p > (n-2) \cdot b$ and $p \nmid b$, then $p^{\order{p}{b}} \cdot \fpt{f} \in \NN$.  In particular, for all such primes, $p^{\phi(b)}  \cdot \fpt{f} \in \NN$, where $\phi$ denotes Euler's phi function.
\end{corollary}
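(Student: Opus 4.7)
The plan is to deduce this corollary as an essentially formal consequence of Theorem \ref{Main: T}, without doing any independent work on the $F$-purity side. The hypotheses $p > (n-2) \cdot b$ and $p \nmid b$ are exactly those appearing in part (\ref{MT2: p}) of the theorem, so I can invoke its conclusions directly.

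First I would apply part (\ref{MT1: p}) of Theorem \ref{Main: T} to write
\[
\fpt{f} \;=\; \tr{\lambda}{L} \;-\; \frac{E}{p^{L}}
\]
for some integers $L \geq 1$ and $E \geq 0$. Since $\tr{\lambda}{L} = \sum_{e=1}^{L} \digit{\lambda}{e} \cdot p^{-e}$ by definition, multiplying through by $p^{L}$ shows that $p^{L} \cdot \fpt{f}$ is an integer. (Alternatively, Proposition \ref{Truncation: P}(1) tells us that $p^{L} \cdot \tr{\fpt{f}}{L} = \new{L} \in \NN$, and under the hypothesis $\fpt{f} \neq \lambda$ the second description forces $\fpt{f} = \tr{\fpt{f}}{L}$, so $p^{L} \cdot \fpt{f} \in \NN$ directly.)

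Next I would invoke part (\ref{MT2: p}) of Theorem \ref{Main: T}, which gives the sharpened bound $L \leq \order{p}{b}$. Writing $s := \order{p}{b}$, we conclude
\[
p^{s} \cdot \fpt{f} \;=\; p^{s-L} \cdot \bigl( p^{L} \cdot \fpt{f} \bigr) \;\in\; \NN,
\]
which is the first claim. For the second claim, Euler's theorem (applied to the unit $p \bmod b$, which makes sense because $p \nmid b$) gives $p^{\phi(b)} \equiv 1 \bmod b$, so $s = \order{p}{b}$ divides $\phi(b)$; hence
\[
p^{\phi(b)} \cdot \fpt{f} \;=\; p^{\phi(b) - s} \cdot \bigl( p^{s} \cdot \fpt{f} \bigr) \;\in\; \NN.
\]

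There is no real obstacle here, since the corollary is a direct packaging of Theorem \ref{Main: T}(\ref{MT1: p},\ref{MT2: p}); all the substantive work has been done in proving the main theorem. The only small care needed is the observation that $p^{L} \cdot \tr{\lambda}{L} \in \NN$, which is immediate from the definition of truncation.
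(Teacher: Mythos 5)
Your proposal is correct and is exactly the argument the paper intends: the corollary is recorded there without proof as an immediate consequence of Theorem \ref{Main: T}\eqref{MT1: p} and \eqref{MT2: p}, and your derivation (integrality of $p^{L}\cdot\tr{\lambda}{L}$ and of $E$, the bound $L\le\order{p}{b}$ from part \eqref{MT2: p}, and $\order{p}{b}\mid\phi(b)$ by Lagrange/Euler) fills in the details correctly. One minor caveat: your parenthetical alternative is slightly off, since under the paper's non-terminating-expansion convention $p^{L}\cdot\fpt{f}\in\NN$ yields $\fpt{f}=\tr{\fpt{f}}{L}+p^{-L}$ rather than $\fpt{f}=\tr{\fpt{f}}{L}$ (cf.\ equation \eqref{restatementMR: e} in the proof of the main theorem), but your primary argument does not rely on this aside.
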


Using the techniques of the proof of Theorem \ref{Main: T}, we can analogously find a bound for the power of $p$ appearing in the denominator of $\fpt{f}$ whenever $\fpt{f} \neq \lambda$ and $p$ is not large, which we record here. 

\begin{corollary}
\label{UniformLBounds: C}
In the setting of Theorem \ref{Main: T}, if $\fpt{f} \neq \lambda$ and $p \nmid b$, then $p^{M} \cdot \fpt{f} \in \NN$, where $M :=2 \cdot \phi(b) + \up{\log_2(n-1)},$ and $\phi$ denotes Euler's phi function.
\end{corollary}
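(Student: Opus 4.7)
The plan is to apply Theorem~\ref{Main: T}(1) to write $\fpt{f} = \tr{\lambda}{L} - E/p^L$ for some $L \geq 1$ and integer $E \geq 0$.  Since $\lpr{\paren{a}{p^L}}{b} \geq 1$ and $a \geq 1$, the quantity $\up{(\lpr{\paren{a}{p^L}}{b} + a)/b} \geq 1$, so the bound from Theorem~\ref{Main: T}(1) actually gives $E \leq n-2$.  Because $p^L \tr{\lambda}{L} \in \NN$, the identity $p^L \fpt{f} = p^L \tr{\lambda}{L} - E$ already yields $p^L \fpt{f} \in \NN$; it therefore suffices to prove $L \leq M$.

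Set $s := \order{p}{b}$, so that $s \leq \phi(b)$, and adopt the notation $\Delta_e := p^e \tr{\lambda}{e} - p^e \tr{\fpt{f}}{e}$ and $\ell := \min \set{ e : \Delta_e \geq 1}$ from Lemma~\ref{Difference: L}.  Since $p^L \fpt{f}$ is a positive integer while $\fpt{f} < 1$, the non-terminating expansion convention forces $p^L \tr{\fpt{f}}{L} = p^L \fpt{f} - 1$, whence $\Delta_L = E + 1 \leq n-1$.  Combining this with the growth estimate $\Delta_{\ell + s + k} \geq p^k + 1$ from Lemma~\ref{Difference: L}(2), a short case analysis (treating $L < \ell + s$ trivially and $L \geq \ell + s$ via $k := L - \ell - s$) yields $L \leq \ell + s + \up{\log_2(n-1)}$.

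The main remaining step---and the one I expect to be the real subtlety---is the bound $\ell \leq s$.  The idea is as follows: assuming for contradiction that $\ell > s$, we have $\tr{\fpt{f}}{s} = \tr{\lambda}{s}$, and applying Proposition~\ref{Truncation: P}(2) with starting index $s+1$ gives $\fpt{f} \leq p^s(\fpt{f} - \tr{\fpt{f}}{s}) = p^s(\fpt{f} - \tr{\lambda}{s})$.  Rearranging yields $\fpt{f}(p^s - 1) \geq p^s \tr{\lambda}{s}$, and Lemma~\ref{BasicProperties: L}(3) identifies the right-hand side as $(p^s-1)\lambda$.  Hence $\fpt{f} \geq \lambda$, which contradicts $\fpt{f} < \lambda$ (a consequence of the hypothesis $\fpt{f} \neq \lambda$ together with the general inequality $\fpt{f} \leq \lambda$ implicit in Theorem~\ref{Main: T}).

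Assembling the bounds yields $L \leq \ell + s + \up{\log_2(n-1)} \leq 2 \phi(b) + \up{\log_2(n-1)} = M$, and therefore $p^M \fpt{f} \in \NN$, as desired.
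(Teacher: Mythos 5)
Your proof is correct and follows essentially the same route as the paper's: bound $\Delta_L \leq n-1$, invoke Lemma \ref{Difference: L}(2) to get $L \leq \ell + s + \up{\log_2(n-1)}$, and then use $\ell \leq s \leq \phi(b)$. The only cosmetic difference is that you re-derive the bound $\ell \leq s$ from Proposition \ref{Truncation: P}(2) and Lemma \ref{BasicProperties: L}(3) (in a slightly more direct way), whereas the paper simply cites Corollary \ref{boundsOnFirstDiff: C}, and you obtain $\Delta_L = E+1$ from the explicit form of $\fpt{f}$ rather than from Corollary \ref{nubounds: C}.
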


\begin{remark}
We emphasize that the constant $M$ in Corollary \ref{UniformLBounds: C} depends only on the number of variables $n$ and the quotient $\frac{\sum \qdeg(x_i)}{\qdeg f}  = \frac{a}{b}$, but not on the particular values of $\qdeg x_i$ and $\qdeg f$;  this subtle point will play a key role in Proposition \ref{ACC: P}.
\end{remark}

\begin{remark}[Towards minimal lists] 
For $p \gg 0$, the bounds for $L$ and $E$ appearing in Theorem \ref{Main: T} allows one to produce a finite list of possible values of $\fpt{f}$ for each class of $p$ modulo $\qdeg f$.  We refer the reader to Section \ref{fptExamples: Section} for details and examples.
\end{remark}

The uniqueness statement in point \eqref{MT4: p} of the Theorem \ref{Main: T}  need not hold in general.

\begin{example}[Non-uniqueness in low characteristic]
If $p=2$ and $f \in \LL[x_1,x_2,x_3]$ is any $\LL^{\ast}$-linear combination of $x_1^7, x_2^7, x_3^7$, then $f$ satisfies the hypotheses of Theorem \ref{Main: T}, under the standard grading.  Using \cite{Diagonals}, one can directly compute that $\fpt{f} = \frac{1}{4}$.  On the other hand, the identities 
\begin{align*}
\frac{1}{4}& =\frac{3}{7} - \( \frac{\lpr{\parenone{3\cdot 2^2}} {7} + 7 \cdot 0}{7 \cdot 2^2} \) =\tr{\frac{3}{7}}{2}\\
&  = \frac{3}{7} - \( \frac{\lpr{\parenone{3 \cdot 2^3} }{7} + 7 \cdot 1}{7 \cdot 2^3} \) =\tr{\frac{3}{7}}{3}-\frac{1}{2^3}
\end{align*}
show that the pairs $(L,E) = (2,0)$ and $(L,E) = (3,1)$ both satisfy the conclusions in Theorem \ref{Main: T}.  We point out that the proof of Theorem \ref{Main: T}, being somewhat constructive, predicts the choice of $(L,E)=(2,0)$, but does not ``detect'' the choice of $(L,E) = (3,1)$.
\end{example}

Before concluding this section, we present the following related result;  like Theorem \ref{Main: T} and Corollary \ref{UniformLBounds: C},  its proof relies heavily on Proposition \ref{nubounds: P}.  However, in contrast to these results, its focus is on showing that $\fpt{f} = \min \set{ \(\sum \qdeg x_i\)/ \qdeg f, 1}$ for $p \gg 0$ in a very specific setting, as opposed to describing $\fpt{f}$ when it differs from this value.

\begin{theorem}
\label{Secondary: T}
In the context of Theorem \ref{Main: T}, suppose that $\sum \qdeg x_i > \qdeg f$, so that $\rho:= \frac{\sum \qdeg x_i}{\qdeg f}$ is greater than $1$.  If $p> \frac{n-3}{\rho - 1}$, then $\fpt{f} = 1$.
\end{theorem}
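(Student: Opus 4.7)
The plan is to derive $\fpt{f} = 1$ by ruling out the alternative offered by Theorem \ref{Main: T}(1). Since $\rho > 1$, one has $\lambda := \min\{\rho, 1\} = 1$, so the reduced fraction $a/b$ of Theorem \ref{Main: T} equals $1/1$, and consequently $\up{(\lpr{p^L}{1} + 1)/1} = 2$. Supposing for contradiction that $\fpt{f} \neq 1$, part (1) of Theorem \ref{Main: T} then forces
\[ \fpt{f} = 1 - \frac{1 + E}{p^L} \quad \text{for some integers } L \geq 1 \text{ and } 0 \leq E \leq n - 3. \]
Applying Lemma \ref{explicittruncation: L} to $\fpt{f} = (p^L - 1 - E)/p^L$ yields $\tr{\fpt{f}}{L} = (p^L - 2 - E)/p^L$, and combining with Proposition \ref{Truncation: P}(1) gives $\new{L} = p^L - 2 - E$, which is strictly less than $p^L - 1$.

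The heart of the proof is to apply Proposition \ref{nubounds: P} (the main technical input to Theorem \ref{Main: T} and Corollary \ref{UniformLBounds: C}) at $e = L$ to obtain the matching lower bound $\new{L} \geq p^L - 1$, contradicting the displayed formula above and forcing $\fpt{f} = 1$. The numerical hypothesis $p > (n - 3)/(\rho - 1)$ rewrites as
\[ (p - 1) \sum \qdeg x_i \; > \; (p + n - 4)\qdeg f, \]
and I expect this to be exactly the slack Proposition \ref{nubounds: P} requires. Heuristically, the gap $(p-1)(\rho - 1)\qdeg f$ between the maximum $\qhns$ degree of a monomial outside $\bracket{\m}{1}$ and the degree of $f^{p-1}$ must exceed an $n$-dependent error from the Jacobian-based construction of an explicit monomial witness, and the hypothesis on $p$ is what guarantees this.

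The main obstacle will be matching the constant $n - 3$ in the hypothesis (rather than $n - 1$ or $n - 2$) to the sharp form of the error in Proposition \ref{nubounds: P}. One must verify that the saving of $2$ in the exponent bound --- which in Theorem \ref{Main: T}(1) appears via $\up{2/1} = 2$ in the inequality $E \leq n - 3$ --- corresponds to an improved constant in the Jacobian estimate precisely when $\lambda = 1$, and is not merely an artifact of the special case $a = b = 1$. Once the sharp form of Proposition \ref{nubounds: P} is available at level $e = L$, the argument closes immediately: every admissible pair $(L, E)$ is eliminated by the contradiction between $\new{L} = p^L - 2 - E$ and $\new{L} \geq p^L - 1$.
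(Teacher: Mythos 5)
Your overall strategy --- playing the lower bound of Proposition \ref{nubounds: P} off against the upper bound on $\new{L}$ coming from the explicit form of $\fpt{f}$ --- is the right one, and the numerics do close: the inequality you need is $\up{(p^L+1)\cdot \rho - n} \geq p^L - 1$, which amounts to $(p^L+1)\cdot(\rho - 1) > n-3$, and this follows from the hypothesis $p\cdot(\rho-1) > n-3$ because $p^L + 1 > p$. (Your rewriting of the hypothesis as $(p-1)\sum \qdeg x_i > (p+n-4)\qdeg f$ is equivalent to $(p-1)\cdot(\rho-1) > n-3$, which is slightly \emph{stronger} than what is assumed; this slip is harmless since the inequality actually required is the one above.)

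The genuine gap is that you never verify the hypothesis of Proposition \ref{nubounds: P}, namely $p \nmid \( \new{L}+1 \)$. From $\new{L} = p^L - 2 - E$ this is equivalent to $p \nmid (E+1)$, and since the theorem permits small $p$ when $\rho$ is large (the assumption $p > \frac{n-3}{\rho-1}$ can hold with $p \leq n-2$), an arbitrary admissible pair $(L,E)$ with $0 \leq E \leq n-3$ may well have $p \mid (E+1)$, in which case Proposition \ref{nubounds: P} simply does not apply at $e=L$. The gap is fixable: choose $(L,E)$ with $L$ minimal among admissible pairs; if $p \mid (E+1)$, write $E+1 = pm$, note $L \geq 2$ (otherwise $\fpt{f} = 1 - m \leq 0$), and check that $(L-1, m-1)$ is again admissible with $m-1 \leq n-3$, contradicting minimality. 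The paper sidesteps all of this by working at $e=1$ and never invoking Theorem \ref{Main: T}: since $b=1$, Corollary \ref{boundsOnFirstDiff: C} already forces $\new{1} \leq p\tr{1}{1} - 1 = p-2$, so $p \nmid \(\new{1}+1\)$ is automatic, and Proposition \ref{nubounds: P} at $e=1$ yields $\new{1} \geq \up{(p+1)\cdot\rho - n} \geq p-1$, an immediate contradiction. You should either adopt that shortcut or supply the minimality argument above before applying Proposition \ref{nubounds: P} at level $L$.
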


As we see below, Theorem \ref{Secondary: T} need not hold in low characteristic.

\begin{example}[Illustrating the necessity of $p \gg 0$ in Theorem \ref{Secondary: T}] Set $f=x_1^d + \cdots + x_n^d$.  If $n>d>p$, then $f \in \bracket{\m}{}$, and hence $f^{p^{e-1}} \in \bracket{\m}{e}$ for all $e \geq 1$.  Consequently, $\new{e} \leq p^{e-1}-1$, and therefore $\fpt{f} = \lim \limits_{e \to \infty} p^{-e} \cdot \new{e} \leq p^{-1}$.
\end{example}

\section{$F$-pure thresholds of \qh polynomials:  Examples}
\label{fptExamples: Section}

In this section, we illustrate, via examples, how Theorem \ref{Main: T} may be used to produce ``short," or even minimal, lists of possible values for $F$-pure thresholds.
We begin with the most transparent case:  If $\qdeg f = \sum \qdeg x_i$, then the statements in Theorem \ref{Main: T} become less technical.  Indeed, in this case, $a=b=1$, and hence $\order{p}{b} = 1 = \lpr{m}{b}$ for every $m \in \NN$.  In this context, substituting these values into Theorem \ref{Main: T} recovers the following identity, originally discovered by Bhatt and Singh under the standard grading.

\begin{example} \cite[Theorem 1.1]{BhattSingh}
\label{calabiYau: E} 
Fix an $\NN$-grading on $\LL[x_1, \cdots, x_n]$.  Consider a homogeneous polynomial $f$ with $d:=\qdeg f =\sum \qdeg x_i$ and $\sqrt{\jac{f}} = \m$.  If $p>n-2$ and $\fpt{f} \neq 1$, then \[ \fpt{f} = 1 -  A \cdot p^{-1} \text{ for some integer $1 \leq A \leq d-2$}.\] \end{example}

Next, we consider the situation when $\qdeg f = \sum \qdeg x_i + 1$;  already, we see that this minor modification leads to a more complex statement.

\begin{corollary}  
\label{aCY: C}
Fix an $\NN$-grading on $\LL[x_1, \cdots, x_n]$.  Consider a \qh polynomial $f$ with $d:=\qdeg f =\sum \qdeg x_i + 1$ and $\sqrt{\jac{f}} = \m$, and suppose that $p > (n-2) \cdot d$.
\begin{enumerate}
\item If $\fpt{f} = 1 - \frac{1}{d}$, then $p \equiv 1 \bmod d$.
\item If $\fpt{f} \neq 1-\frac{1}{d}$, then $\fpt{f} = 1 - \frac{1}{d} -  \( A- \frac{\lpr{p}{d}}{d} \) \cdot p^{-1}$ for some integer $A$ satisfying
 \begin{enumerate}
\item $1 \leq A \leq d-2$ if $p \equiv -1 \bmod d$, and
\item $1 \leq A \leq d-3$ otherwise.
\end{enumerate}
\end{enumerate}
\end{corollary}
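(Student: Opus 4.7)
The plan is to apply Theorem~\ref{Main: T} with $\lambda := (d-1)/d$ expressed in lowest terms as $a/b = (d-1)/d$ (possible since $\gcd(d-1, d) = 1$), supplemented by Proposition~\ref{Truncation: P}(2) for part (1). The hypothesis $p > (n-2)d$ (with $n \geq 3$) ensures $p > d$, hence $p \nmid d$, so parts (2) and (3) of Theorem~\ref{Main: T} are all available.

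For part (1), I argue the contrapositive: assuming $\sigma := \order{p}{d} \geq 2$, I will show $\fpt{f} \neq \lambda$ by exhibiting a position at which Proposition~\ref{Truncation: P}(2) fails for $\lambda$. From $\lambda = 1 - 1/d$ and Lemma~\ref{explicittruncation: L}, one verifies the identity $\digit{\lambda}{e} = (p-1) - \digit{1/d}{e}$ for every $e$, so comparing the first and $\sigma$-th base-$p$ digits of $\lambda$ reduces to comparing $\digit{1/d}{\sigma}$ with $\digit{1/d}{1} = \lfloor p/d \rfloor$. Using $\lpr{p^\sigma}{d} = 1$ and $\lpr{p^{\sigma-1}}{d} \geq 2$ (the latter forced by $\sigma$ being the order of $p$ mod $d$), Lemma~\ref{explicittruncation: L} gives $\digit{1/d}{\sigma} = (p \cdot \lpr{p^{\sigma-1}}{d} - 1)/d \geq (2p-1)/d > \lfloor p/d \rfloor$, whence $\digit{\lambda}{\sigma} < \digit{\lambda}{1}$. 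A short comparison then yields $T := \sum_{k \geq 0} \digit{\lambda}{\sigma+k} p^{-(k+1)} < \lambda$, so $\fpt{f} = \lambda$ would contradict Proposition~\ref{Truncation: P}(2) applied at index $\sigma$.

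For part (2), assume $\fpt{f} \neq \lambda$ and apply Theorem~\ref{Main: T}(1) to obtain $\fpt{f} = \lambda - (\lpr{(d-1)p^L}{d} + dE)/(d p^L)$ for some $L \geq 1$ with $0 \leq E \leq n - 1 - \lceil (\lpr{(d-1)p^L}{d} + d - 1)/d \rceil$. The crucial step is to force $L = 1$ via Theorem~\ref{Main: T}(3): the condition $a < \lpr{ap^e}{b}$ with $a = d - 1$, $b = d$ would require $\lpr{(d-1)p^e}{d} = d$, i.e.\ $d \mid (d-1)p^e$, which is impossible since $\gcd(d, d-1) = \gcd(d, p) = 1$. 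Hence the range $1 \leq e \leq L - 1$ is empty, so $L = 1$. Substituting $L = 1$ and $\lpr{(d-1)p}{d} = d - \lpr{p}{d}$ (from $(d-1)p \equiv -p \bmod d$) into the formula, and setting $A := E + 1$, gives $\fpt{f} = 1 - 1/d - (A - \lpr{p}{d}/d)/p$, with $A \geq 1$ immediate from $E \geq 0$.

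For the upper bound on $A$, I case-split on $\lpr{p}{d}$. If $p \equiv -1 \bmod d$, then $\lpr{p}{d} = d - 1$ and the ceiling in the bound for $E$ evaluates to $1$, so $A \leq n - 1$; otherwise $\lpr{p}{d} \leq d - 2$, the ceiling evaluates to $2$, and $A \leq n - 2$. Since each $\qdeg x_i \geq 1$, $d = \sum \qdeg x_i + 1 \geq n + 1$, giving $A \leq n - 1 \leq d - 2$ and $A \leq n - 2 \leq d - 3$ respectively, matching the stated bounds. The main obstacle is the digit comparison in part (1); the remainder is a careful specialization of Theorem~\ref{Main: T}.
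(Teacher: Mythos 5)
Your proposal is correct and follows essentially the same route as the paper: part (1) via the complementary-digit identity for $1-\tfrac1d$ together with Proposition \ref{Truncation: P}(2) (you argue the contrapositive at the single index $\order{p}{d}$, the paper argues directly at all indices), and part (2) by forcing $L=1$ through point \eqref{MT3: p} of Theorem \ref{Main: T} and then substituting $\lpr{\parenone{(d-1)p}}{d}=d-\lpr{p}{d}$ with $A=E+1$. Your implicit assumption $n\geq 3$ to get $p>d$ is the same one the paper's proof makes when it invokes point \eqref{MT3: p}, and your bounds $A\leq n-1$ and $A\leq n-2$ are in fact sharper than the stated $d-2$ and $d-3$, which they imply via $d\geq n+1$.
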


\begin{proof}  We begin with $(1)$:  Lemma \ref{explicittruncation: L} implies that $\digit{\( d^{-1} \)}{1} \leq \digit{ \( d^{-1} \)}{s} \text{ for $s \geq 1$}$, and hence that 
\begin{equation} 
\label{almostcy1: e}
\digit{\(1 - d^{-1}\)}{1} = p-1- \digit{\(d^{-1}\)}{1} \geq p-1- \digit{\(d^{-1}\)}{s} =   \digit{\(1 - d^{-1}\)}{s}\end{equation}  
for every $s \geq 1$.  However, if $\fpt{f} = 1 - d^{-1}$,  Proposition \ref{Truncation: P} implies that $\digit{\( 1 - d^{-1} \)}{1} \leq \digit{\( 1 - d^{-1} \)}{s}$ for every $ s \geq 1$.  Consequently, equality holds throughout \eqref{almostcy1: e}, and hence $\digit{\( d^{-1} \)}{1} = \digit{ \( d^{-1} \)}{s}$ for every $s \geq 1$, which by Lemma \ref{explicittruncation: L} occurs if and only if $p \equiv 1 \bmod d$.

We now address the second point:  In this setting, Theorem \ref{Main: T} states that $\fpt{f} \in p^{-L} \cdot \NN$ for some integer $L \geq 1$.  We will now show that $L$ must equal one:  Indeed, otherwise $L \geq 2$, which allows us to set $e = 1$ in the third point Theorem \ref{Main: T} to deduce that
\[ 1 \leq d-1 < \lpr{ \parenone{p(d-1)} }{d} = d - \lpr{p}{d},\] and hence that 
$\lpr{p}{d} < 1$, which is impossible, as $\lpr{p}{d}$ is always a positive integer.  We conclude that $L=1$, and the reader may verify that substituting $L=1$, $\lpr{\parenone{p(d-1)}}{d} = d - \lpr{p}{d}$, and $A:=E+1$ into Theorem \ref{Main: T} produces the desired description of $\fpt{f}$.
\end{proof}

\subsection{The two variable case}  We now shift our focus to the two variable case of Theorem \ref{Main: T}, motivated by the following example.

\begin{example}
\label{HaraMonsky: E}
In \cite[Corollary 3.9]{Hara2006}, Hara and Monsky independently described the possible values of $\fpt{f}$ whenever $f$ is homogeneous in two variables (under the standard grading) of degree $5$ with an isolated singularity at the origin over an algebraically closed field (and hence, a product of five distinct linear forms), and $p \neq 5$;  we recall their computation below (the description in terms of truncations is our own).  

\begin{itemize}
\item If $p \equiv 1 \bmod 5$, then $\fpt{f} = \frac{2}{5}$ or $\frac{2p-2}{5p} =\tr{\frac{2}{5}}{1}$. 
\item If $p \equiv 2 \bmod 5$, then $\fpt{f} = \frac{2p^2-3}{5 p^2} =\tr{\frac{2}{5}}{2}$  or $\frac{2p^3-1}{5p^3} =\tr{\frac{2}{5}}{3}$.
\item If $p \equiv 3 \bmod 5$, then $\fpt{f} = \frac{2p-1}{5p} =\tr{\frac{2}{5}}{1}$.
\item If $p \equiv 4 \bmod 5$, then $\fpt{f} = \frac{2}{5}$ or $\frac{2p-3}{5p} =\tr{\frac{2}{5}}{1}$ or $\frac{2p^2-2}{5 p^2} =\tr{\frac{2}{5}}{2}$.
\end{itemize}

The methods used in \cite{Hara2006} rely on so-called ``syzygy gap" techniques and the geometry of $\mathbb{P}^1$, and hence differ greatly from ours.  In this example, we observe the following:
First, the $F$-pure threshold is always $\lambda = \frac{2}{5}$, or a truncation of $\frac{2}{5}$.  Secondly, there seem to be fewer choices for the truncation point $L$ than one might expect, given Theorem \ref{Main: T}. 
\end{example}

In this subsection, we show that the two observations from Example \ref{HaraMonsky: E}
hold in general in the two variable setting.  We now work in the context of Theorem \ref{Main: T} with $n=2$, and relabel the variables so that $f \in \LL[x,y]$.  Note that if $\qdeg f < \qdeg xy$, then $\fpt{f} = 1$, by Theorem \ref{Secondary: T} (an alternate justification:  this inequality is satisfied if and only if, after possibly re-ordering the variables, $f = x + y^m$ for some $m \geq 1$, in which case one can directly compute that $\new{e} = p^e-1$, and hence that $\fpt{f} = 1$).  Thus, the interesting case here is when $\qdeg f \geq \qdeg xy$.  In this case, one obtains the following result.

\begin{theorem}[cf. Theorem \ref{Main: T}]
\label{Main2D: T}
Fix an $\NN$-grading on $\LL[x,y]$.  Consider a \qh polynomial $f$ with $\sqrt{\jac{f}} = \m$ and $\qdeg f \geq \qdeg xy$.  If $\fpt{f} \neq \frac{\qdeg xy}{\qdeg f} = \frac{a}{b}$, written in lowest terms, then
\[ \fpt{f} = \tr{ \frac{ \qdeg xy}{\qdeg f}}{L} = \frac{ \qdeg xy}{\qdeg f} - \frac{\lpr{\paren{a}{p^L}}{b}}{b \cdot p^L} \] for some integer $L$ satisfying the following properties:
\begin{enumerate}
\item If $p \nmid b$, then $1 \leq L \leq \order{p}{b}$.
\item If $p > b$, then $a < \lpr{\paren{a}{p^e}}{b}$ for all $1 \leq e \leq L-1$.
\item $1 \leq \lpr{\paren{a}{p^L}}{b} \leq b-a$ for \emph{all} possible values of $p$.
\end{enumerate}
\end{theorem}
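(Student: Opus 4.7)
The plan is to derive Theorem \ref{Main2D: T} as a direct specialization of Theorem \ref{Main: T} to the case $n = 2$. First, since $\qdeg f \geq \qdeg xy = \qdeg x + \qdeg y$, the quantity $\lambda := \min\set{\frac{\qdeg x + \qdeg y}{\qdeg f}, 1}$ appearing in Theorem \ref{Main: T} simplifies to $\frac{\qdeg xy}{\qdeg f} = \frac{a}{b}$, matching the notation in the statement.

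The main step is to invoke Theorem \ref{Main: T}(1) with $n = 2$, which produces a pair $(L, E)$ of non-negative integers with $L \geq 1$, with $\fpt{f} = \tr{\lambda}{L} - \frac{E}{p^L}$, and with
\[ 0 \;\leq\; E \;\leq\; 1 - \up{\frac{\lpr{\paren{a}{p^L}}{b} + a}{b}}. \]
Since $\lpr{\paren{a}{p^L}}{b} \geq 1$ by the definition of least positive residue and $a \geq 1$, the numerator inside the ceiling is at least $2$, so the ceiling is at least $1$. In order for $E \geq 0$ to be possible, the ceiling must equal exactly $1$, which is equivalent to $\lpr{\paren{a}{p^L}}{b} + a \leq b$. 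This simultaneously establishes point (3) of the theorem (the inequality $\lpr{\paren{a}{p^L}}{b} \leq b - a$ holds without any restriction on $p$) and forces $E = 0$. With $E = 0$, the formula in Theorem \ref{Main: T}(1) collapses via Lemma \ref{explicittruncation: L} to
\[ \fpt{f} = \tr{\lambda}{L} = \lambda - \frac{\lpr{\paren{a}{p^L}}{b}}{b p^L}, \]
which is exactly the displayed identity.

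For the remaining two items, observe that when $n = 2$ the hypothesis ``$p > (n-2) \cdot b$'' appearing in Theorem \ref{Main: T}(2) and (3) becomes the vacuous inequality $p > 0$. Consequently, point (1) of Theorem \ref{Main2D: T} is exactly Theorem \ref{Main: T}(2), and point (2) is exactly Theorem \ref{Main: T}(3), each requiring only the stated condition on $p$ relative to $b$.

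The only boundary case to address separately is $a = b = 1$ (i.e., $\qdeg f = \qdeg xy$), where $\lpr{\paren{1}{p^L}}{1} = 1$ by convention, and the bound $E \leq 1 - \up{2} = -1$ has no non-negative solution. This means no valid $(L,E)$ can exist, so Theorem \ref{Main: T}(1) itself forces $\fpt{f} = \lambda = 1$, and the hypothesis $\fpt{f} \neq \lambda$ of Theorem \ref{Main2D: T} is vacuous. There is no serious obstacle here; the entire content of the argument is the observation that the ``$E$-freedom'' parametrized in Theorem \ref{Main: T} collapses when $n = 2$, converting the general upper bound on $E$ into the \emph{size constraint} $\lpr{\paren{a}{p^L}}{b} \leq b - a$ recorded as point (3).
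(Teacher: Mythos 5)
Your proposal is correct and follows essentially the same route as the paper: specialize Theorem \ref{Main: T} to $n=2$, note that the upper bound on $E$ becomes $1 - \up{\frac{\lpr{\paren{a}{p^L}}{b}+a}{b}}$, and observe that nonnegativity of $E$ forces the ceiling to equal one, which simultaneously yields $E=0$ (hence the truncation formula) and the constraint $\lpr{\paren{a}{p^L}}{b} \leq b-a$ of point (3), while points (1) and (2) are inherited from Theorem \ref{Main: T}(2)--(3) with the hypothesis $p > (n-2)b$ rendered vacuous. Your explicit treatment of the boundary case $a=b=1$ is a welcome clarification that the paper leaves implicit.
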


\begin{proof}
Assuming $\fpt{f} \neq \frac{\qdeg xy}{\qdeg f}$, the bounds for $E$ in Theorem \ref{Main: T} become \[    0 \leq E \leq 1 - \up{ \frac{\lpr{\paren{a}{p^L}}{b} + a}{b} }. \]   As the rounded term above is always either one or two, the inequality forces it to equal one, so that $E=0$, which shows that $\fpt{f}$ is a truncation of $\frac{\qdeg xy}{\qdeg f}$.  Moreover, the fact that the rounded term above equals one also implies that $\lpr{\paren{a}{p^L}}{b} + a \leq b$.
\end{proof}

\begin{remark}
Though the first two points in Theorem \ref{Main2D: T} appear in Theorem \ref{Main: T}, the third condition is special to the setting of two variables.  Indeed, this extra condition will be key in eliminating potential candidate $F$-pure thresholds.  For example, this extra condition allows us to recover the data in Example \ref{HaraMonsky: E}.  Rather than justify this claim, we present two new examples. 
\end{remark}

\begin{example}
\label{6LinearFactors: E}
Let $f \in \LL[x,y]$ be as in Theorem \ref{Main2D: T}, with $\frac{ \qdeg(xy) }{\qdeg f} = \frac{1}{3}$.  For $p\geq 5$, the following hold: 
\begin{itemize}
\item If $p \equiv 1 \bmod 3$, then $\fpt{f} = \frac{1}{3}$ or $\tr{\frac{1}{3}}{1} = \frac{1}{3} - \frac{1}{3p}$.
\item If $p \equiv 2 \bmod 3$, then $\fpt{f} = \frac{1}{3}$ or $\tr{\frac{1}{3}}{1} = \frac{1}{3} - \frac{2}{3p}$ or $\tr{\frac{1}{3}}{2} = \frac{1}{3} - \frac{1}{3p^2}$.
\end{itemize}
\end{example}

In Example \ref{6LinearFactors: E}, the second and third points of Theorem \ref{Main2D: T} were uninteresting, as they did not ``whittle away" any of the candidate $F$-pure thresholds identified by the first point of Theorem \ref{Main2D: T}.  The following example is more interesting, as we will see that both of the second and third points of Theorem \ref{Main2D: T}, along with Proposition \ref{Truncation: P}, will be used to eliminate potential candidates.

\begin{example}
\label{7LinearFactors: E}
Let $f \in \LL[x,y]$ be as in Theorem \ref{Main2D: T}, with $\frac{ \qdeg(xy) }{\qdeg f} = \frac{2}{7}$. For $p\geq 11$, the following hold:
\begin{itemize}
\item If $p \equiv 1 \bmod 7$, then $\fpt{f} = \frac{2}{7}$ or $\tr{\frac{2}{7}}{1} = \frac{2}{7} - \frac{2}{7p}$.
\item If $p \equiv 2 \bmod 7$, then $\fpt{f} = \tr{\frac{2}{7}}{1} = \frac{2}{7} - \frac{4}{7p}$ or $\tr{\frac{2}{7}}{2} = \frac{2}{7} - \frac{1}{7p^2}$.
\item If $p \equiv 3 \bmod 7$, then $\fpt{f} = \tr{\frac{2}{7}}{2} = \frac{2}{7} - \frac{4}{7p^2}$ or $\tr{\frac{2}{7}}{3} = \frac{2}{7} - \frac{5}{7p^3}$ or $\tr{\frac{2}{7}}{4} = \frac{2}{7} - \frac{1}{7p^4}$.
\item If $p \equiv 4 \bmod 7$, then $\fpt{f} = \tr{\frac{2}{7}}{1} = \frac{2}{7} - \frac{1}{7p}$.
\item If $p \equiv 5 \bmod 7$, then $\fpt{f} = \tr{\frac{2}{7}}{1} = \frac{2}{7} - \frac{3}{7p}$ or $\tr{\frac{2}{7}}{2} = \frac{2}{7} - \frac{1}{7p^2}$.
\item If $p \equiv 6 \bmod 7$, then $\fpt{f} = \frac{2}{7}$ or $\tr{\frac{2}{7}}{1} = \frac{2}{7} - \frac{5}{7p}$ or $\tr{\frac{2}{7}}{2}  = \frac{2}{7} - \frac{2}{7p^2}$.
\end{itemize}
\noindent For the sake of brevity, we only indicate how to deduce the lists for $p \equiv 3 \bmod 7$ and $p \equiv 4 \bmod 7$.  Similar methods can be used for the remaining cases.

$(p \equiv 3 \bmod 7)$. \ In this case, it follows from Lemma \ref{explicittruncation: L} that $\digit{\( \frac{2}{7} \)}{1} = \frac{2p-6}{7}$ and $\digit{\( \frac{2}{7} \)}{5} = \frac{p-3}{7}$.  In light of this, the second point of Proposition \ref{Truncation: P}, which shows that the first digit of $\fpt{f}$ must be the smallest digit, implies that $\fpt{f} \neq \frac{2}{7}$.  Thus, the first point of Theorem \ref{Main2D: T} states that 
\[ \fpt{f} = \tr{\frac{2}{7}}{L} \text{ for some $1 \leq L \leq \order{p}{7} = 6$, as $p \equiv 3 \bmod 7$}. \]
However, as $2 \not \leq \lpr{\paren{2}{p^4}}{7} = 1$, the second point of Theorem \ref{Main2D: T} eliminates the possibilities that $L=5$ or $6$.  Moreover, as $\lpr{\paren{2}{p}}{7} = 6 \not \leq 7-2 = 5$, the third point of Theorem \ref{Main2D: T} eliminates the possibility that $L=1$.  Thus, the only remaining possibilities are $\fpt{f} = \tr{\frac{2}{7}}{2}, \tr{\frac{2}{7}}{3}$, and $\tr{\frac{2}{7}}{4}$.

$(p \equiv 4 \bmod 7)$. \  As before, we compute that $\digit{\(\frac{2}{7}\)}{1} = \frac{2p-1}{7}$ is greater than $\digit{\(\frac{2}{7}\)}{2} = \frac{p-4}{7}$, and hence it again follows the second point of Proposition \ref{Truncation: P} that $\fpt{f} \neq \frac{2}{7}$.  Consequently, the first point of Theorem \ref{Main2D: T} states that \[ \fpt{f} = \tr{\frac{2}{7}}{L} \text{ for some $1 \leq L \leq \order{p}{7} = 3$, as $p \equiv 4 \bmod 7$.}\]
However, we observe that $2 \not \leq \lpr{\paren{2}{p^2}}{7} = 1$, and hence the second point of Theorem \ref{Main2D: T} eliminates the possibility that $L=2$ or $3$.    Thus, the only remaining option is that $\fpt{f} = \tr{\frac{2}{7}}{1}$.
\end{example}

\begin{remark}[Minimal lists]
In many cases, we are able to verify that the ``whittled down" list obtained through the application of Theorems \ref{Main2D: T} and \ref{Main: T} and Proposition \ref{Truncation: P} is, in fact, minimal.  For example, every candidate listed in Example \ref{HaraMonsky: E} is of the form $\fpt{f}$, where $f$ varies among the polynomials $x^5 + y^5$, $x^5+x y^4$, and $x^5 + x y^4 + 7 x^2 y^3$, and $p$ various among the primes less than or equal to $29$.
\end{remark}

An extreme example of the ``minimality" of the lists of candidate thresholds  appears below.  Note that, in this example, the list of candidate thresholds is so small that it actually determines the precise value of $\fpt{f}$ for $p \gg 0$.

\begin{example}[$F$-pure thresholds are precisely determined] \label{determined: E}  Let $f \in \LL[x,y]$ be as in Theorem \ref{Main2D: T}, with $\frac{ \qdeg(xy) }{\qdeg f} = \frac{3}{5}$;  for example, we may take $f = x^5 + x^3 y +  x y^2$, under the grading given by $(\qdeg x, \qdeg y) = (1,2)$.  Using Theorem \ref{Main2D: T} and Proposition \ref{Truncation: P} in a manner analogous to that used in Example \ref{7LinearFactors: E}, we obtain the following complete description of $\fpt{f}$ for $p\geq7$.
\begin{itemize}
\item If $p \equiv 1 \bmod 5$, then $\fpt{f} = \frac{3}{5}$.
\item If $p \equiv 2 \bmod 5$, then $\fpt{f} = \tr{\frac{3}{5}}{1} = \frac{3}{5} - \frac{1}{5p}.$
\item If $p \equiv 3 \bmod 5$, then $\fpt{f} = \tr{\frac{3}{5}}{2} = \frac{3}{5} - \frac{2}{5p^2}.$
\item If $p \equiv 4 \bmod 5$, then $\fpt{f} = \tr{\frac{3}{5}}{1} = \frac{3}{5} - \frac{2p}{5}.$
\end{itemize}
\end{example}

We conclude this section with one final example illustrating ``minimality".  In this instance, however, we focus on the higher dimensional case.  Although the candidate list for $F$-pure thresholds produced by Theorem \ref{Main: T} is more complicated (due to the possibility of having a non-zero ``$E$" term when $n > 2$), the following example shows that we can nonetheless obtain minimal lists in these cases using methods analogous to those used in this section's previous examples. 

\begin{example}[Minimal lists for $n\geq3$]
Let $f \in \LL[x,y,z]$ satisfy the hypotheses of Theorem \ref{Main: T}, with $\frac{\qdeg xyz}{\qdeg f} = \frac{2}{3}$.  Using the bounds for $E$ and $L$ therein, we obtain the following for $p \geq 5$:
\begin{itemize}
\item If $p \equiv 1 \bmod 3$, then $\fpt{f} = \frac{2}{3}$ or $\tr{\frac{2}{3}}{1} = \frac{2}{3} - \frac{2}{3p}$.
\item If $p \equiv 2 \bmod 3$, then $\fpt{f} = \tr{\frac{2}{3}}{1} = \frac{2}{3} - \frac{1}{3p}$ or $\tr{\frac{2}{3}}{1} - \frac{1}{p} = \frac{2}{3} - \frac{4}{3p}$
\end{itemize}  We claim that this list is minimal.  In fact, if $f = x^9 + x y^4 + z^3$, homogeneous under the grading determined by $(\qdeg x, \qdeg y, \qdeg z) = (1,2,3)$, we obtain each of these possibilities as $p$ varies.
\end{example}

\section{$F$-pure thresholds of \qh polynomials:  Details} \label{details: S}

Here, we prove the statements referred to in Section \ref{fptDiscussion: Section}; we begin with some preliminary results.  

\subsection{Bounding the defining terms of the $F$-pure threshold}
\label{prelimDetails: ss}

This subsection is dedicated to deriving bounds for $\new{e}$.  Our methods for deriving lower bounds are an extension of those employed by Bhatt and Singh in \cite{BhattSingh}.

\begin{lemma}
\label{generalfptBound: L}
If $f\in \LL[x_1, \cdots, x_n]$ is \qh under some $\NN$-grading, then  for every $e \geq 1$, $\new{e} \leq \down{ (p^e-1) \cdot \frac{\sum \qdeg x_i}{\qdeg f} }$.  In particular, $\fpt{f} \leq \min \set{\frac{\sum \qdeg x_i}{\qdeg f}, 1}$. 
\end{lemma}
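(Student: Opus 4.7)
The plan is to combine two elementary observations. First, since $f$ is $\NN$-homogeneous of weighted degree $\qdeg f$, every power $f^N$ is again homogeneous, of weighted degree $N\cdot\qdeg f$. Second, because $\bracket{\m}{e} = (x_1^{p^e},\ldots,x_n^{p^e})$, the quotient $R/\bracket{\m}{e}$ admits an $\LL$-vector space basis consisting of the ``standard'' monomials $x_1^{a_1}\cdots x_n^{a_n}$ with $0 \leq a_i \leq p^e - 1$, and each such monomial has weighted degree at most $\sum (p^e-1)\qdeg x_i = (p^e-1)\sum \qdeg x_i$.

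Next, I would expand $f^N = \sum_{a} c_a\, x_1^{a_1}\cdots x_n^{a_n}$ and note that $f^N \notin \bracket{\m}{e}$ is equivalent to the existence of a tuple $(a_1,\ldots,a_n)$ with every $a_i \leq p^e - 1$ and $c_a \neq 0$. Because every monomial appearing in the expansion of the $\NN$-homogeneous element $f^N$ has weighted degree exactly $N\cdot \qdeg f$, such a tuple must satisfy
\[
N\cdot \qdeg f \;\leq\; (p^e - 1)\sum \qdeg x_i.
\]
Since $N$ is a non-negative integer, this forces $N \leq \down{(p^e-1)\sum \qdeg x_i\, /\, \qdeg f}$, which is precisely the asserted upper bound on $\new{e}$.

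For the ``in particular'' statement, I would divide this bound by $p^e$ and let $e \to \infty$, obtaining $\fpt{f} \leq \sum \qdeg x_i/\qdeg f$; combining this with the universal inequality $\fpt{f} \leq 1$ recorded in Definition \ref{fptDef: D} then gives the minimum bound. The argument is essentially a clean weighted-degree count, so there is no serious obstacle; the only minor point requiring care is the passage from the rational inequality on $N\cdot\qdeg f$ to the integer floor in the statement, which is immediate since $N \in \NN$.
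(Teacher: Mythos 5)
Your argument is correct and is essentially the same as the paper's: both identify a supporting monomial of $f^{\new{e}}$ lying outside $\bracket{\m}{e}$, compare its weighted degree $\new{e}\cdot\qdeg f$ with the maximal possible weighted degree $(p^e-1)\sum\qdeg x_i$ of a monomial avoiding $\bracket{\m}{e}$, and then pass to the limit for the ``in particular'' statement. No gaps.
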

 
\begin{proof}
By Definition \ref{fptDef: D}, it suffices to establish the upper bound on $\new{e}$.  However, as $f^{\new{e}} \notin \bracket{\m}{e}$, there is a supporting monomial $\mu = x_1^{a_1} \cdots x_n^{a_n}$ of $f^{\new{e}}$ not in $\bracket{\m}{e}$, and comparing degrees shows that
\[ \new{e} \cdot \qdeg f  = \qdeg \mu = \sum a_i \cdot \qdeg x_i \leq \( p^e-1\) \cdot \sum \qdeg x_i.\]
\end{proof}

\begin{corollary}
\label{boundsOnFirstDiff: C}
Let $f\in \LL[x_1, \cdots, x_n]$ be a \qh polynomial under some $\NN$-grading, and write $\lambda = \min \set{\frac{\sum \qdeg x_i}{\qdeg f}, 1}=\frac{a}{b}$ in lowest terms.  If $\fpt{f} \neq \lambda$, then $\Delta_e:=p^e \tr{\lambda}{e} - p^e \tr{\fpt{f}}{e}$ defines a non-negative, non-decreasing, unbounded sequence. 
Moreover, if $p \nmid b$, then \[1 \leq \min \set{ e : \Delta_e \neq 0 } \leq \order{p}{b}.\] 
\end{corollary}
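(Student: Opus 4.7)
The plan is to reduce the first assertion directly to Lemma \ref{Difference: L} and to derive the upper bound on $\ell := \min\{e \geq 1 : \Delta_e \neq 0\}$ by combining the two conclusions of Proposition \ref{Truncation: P} with the periodicity of the base $p$ expansion of $\lambda$ recorded in Lemma \ref{BasicProperties: L}(3).

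First I would note that Lemma \ref{generalfptBound: L} gives $\fpt{f} \leq \lambda$, and the hypothesis $\fpt{f} \neq \lambda$ upgrades this to $\fpt{f} < \lambda$. Applying Lemma \ref{Difference: L}(1) to the pair $(\alpha,\beta) = (\fpt{f}, \lambda)$ immediately shows that $\{\Delta_e\}_{e \geq 1}$ is non-negative, non-decreasing, and unbounded. In particular, the set $\{e \geq 1 : \Delta_e \neq 0\}$ is non-empty, so $\ell$ is a well-defined positive integer.

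Next, assume $p \nmid b$ and set $s = \order{p}{b}$. I would argue by contradiction: suppose $\ell > s$, so in particular $\Delta_s = 0$, which means $\tr{\fpt{f}}{s} = \tr{\lambda}{s}$. Lemma \ref{BasicProperties: L}(3) gives $p^s \tr{\lambda}{s} = (p^s - 1)\lambda$, so Proposition \ref{Truncation: P}(1) yields
\[
\new{s} \;=\; p^s \tr{\fpt{f}}{s} \;=\; p^s \tr{\lambda}{s} \;=\; (p^s - 1)\lambda.
\]
On the other hand, Proposition \ref{Truncation: P}(2), applied with starting index $s+1$, gives $\fpt{f} \leq p^s(\fpt{f} - \tr{\fpt{f}}{s})$; rearranging and using Proposition \ref{Truncation: P}(1) again yields
\[
(p^s - 1)\fpt{f} \;\geq\; p^s \tr{\fpt{f}}{s} \;=\; \new{s} \;=\; (p^s - 1)\lambda,
\]
hence $\fpt{f} \geq \lambda$, which contradicts $\fpt{f} < \lambda$.

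The main obstacle is the correct bookkeeping of Proposition \ref{Truncation: P}: part (1) is needed to convert the hypothesis $\tr{\fpt{f}}{s} = \tr{\lambda}{s}$ into the exact value $\new{s} = (p^s-1)\lambda$, while part (2) must simultaneously supply the matching lower bound $\fpt{f} \geq \new{s}/(p^s - 1)$. The periodicity statement in Lemma \ref{BasicProperties: L}(3) is precisely the bridge that identifies $p^s \tr{\lambda}{s}$ with $(p^s - 1)\lambda$, allowing these two facts to collide and force the contradiction $\fpt{f} \geq \lambda$. Everything else is routine manipulation of base $p$ truncations.
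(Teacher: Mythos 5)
Your proof is correct. The first assertion and the setup of the second are handled exactly as in the paper: Lemma \ref{generalfptBound: L} upgrades $\fpt{f}\neq\lambda$ to $\fpt{f}<\lambda$, Lemma \ref{Difference: L} gives the properties of $\set{\Delta_e}$, and the bound on $\ell$ is obtained by assuming $\Delta_s=0$ for $s=\order{p}{b}$ and deriving a contradiction. Where you genuinely diverge is in how the contradiction is extracted. The paper works digit by digit: from $\tr{\fpt{f}}{s}=\tr{\lambda}{s}$ it plays the inequality $\fpt{f}\leq\lambda$ off against Proposition \ref{Truncation: P}(2) to squeeze the digits in positions $s+1,\dots,2s$, concludes $\Delta_{2s}=0$, and then iterates (``a repeated application of this argument'') to force $\Delta_{ms}=0$ for all $m$ and hence $\fpt{f}=\lambda$. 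You instead convert Proposition \ref{Truncation: P}(2) into the single numerical inequality $p^s\tr{\fpt{f}}{s}\leq(p^s-1)\fpt{f}$ and collide it with the exact identity $p^s\tr{\lambda}{s}=(p^s-1)\lambda$ supplied by Lemma \ref{BasicProperties: L}(3), obtaining $\fpt{f}\geq\lambda$ in one step. The ingredients are identical, but your version is shorter and avoids the induction/iteration entirely, which makes the argument tighter than the paper's; nothing is lost in the trade.
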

 
\begin{proof}  By Lemma \ref{generalfptBound: L}, the assumption that $\fpt{f} \neq \lambda$ implies that $\fpt{f} < \lambda$, the so the asserted properties of $\set{ \Delta_e}_e$ follow from Lemma \ref{Difference: L}. Setting $s:= \order{p}{b}$,  it follows from Lemma \ref{explicittruncation: L} that 
\[ \lambda:= . \ \overline{\digit{\lambda}{1} : \cdots : \digit{\lambda}{s} } \ (\base p).  \]

By means of contradiction, suppose $\Delta_{s} = 0$, so that  $\tr{\lambda}{s} = \tr{\fpt{f}}{s}$, i.e., so that
\begin{equation}
\label{fptexp: e}
 \fpt{f} = . \ \digit{\lambda}{1} : \cdots : \digit{\lambda}{s} : \digit{\fpt{f}}{s+1} : \digit{\fpt{f}}{s+2} : \cdots \ (\base p).\end{equation}
As $\fpt{f} \leq \lambda$, comparing the tails of the expansions of $\fpt{f}$ and $\lambda$ shows that 
\[. \ \digit{\fpt{f}}{s+1}: \cdots : \digit{\fpt{f}}{2s} \ (\base p) \leq . \ \digit{\lambda}{1} : \cdots : \digit{\lambda}{s} \ (\base p).\] 
On the other hand, comparing the first $s$ digits appearing in the second point of Proposition \ref{Truncation: P}, recalling the expansion \eqref{fptexp: e}, shows that 
\[ . \ \digit{\lambda}{1} : \cdots : \digit{\lambda}{s} \ (\base p) \leq . \ \digit{\fpt{f}}{s+1} : \cdots \digit{\fpt{f}}{2s} \ (\base p),
\]  and thus we conclude that $\digit{\fpt{f}}{s+e} = \digit{\lambda}{s+e}$ for every $1 \leq e \leq s$, i.e., that $\Delta_{2s} = 0$.  Finally, a repeated application of this argument will show that $\Delta_{ms} = 0$ for every $m \geq 1$, which implies that $\fpt{f} = \lambda$, a contradiction.
\end{proof}

\begin{notation}
If $R$ is any $\NN$-graded ring, and $M$ is a graded $R$-module, $\Deg{M}{d}$   will denote the degree $d$ component of $M$, and $\Deg{M}{\leq d}$ and $\Deg{M}{\geq d}$ the obvious $\Deg{R}{0}$ submodules of $M$.  Furthermore, we use $H_M(t) : = \sum_{d \geq 0} \dim \Deg{M}{d} \cdot t^d$ to denote the Hilbert series of $M$.
\end{notation}

For the remainder of this subsection, we work in the following context.

\begin{setup} 
\label{isoSing: setup}
Fix an $\NN$-grading on $R=\LL[x_1, \cdots, x_n]$, and consider a \qh polynomial $f \in \m$ with $\sqrt{\jac{f}} = \m$.  In this context, $\df{1}, \cdots, \df{n}$ form a \qh system of parameters for $R$, and hence a regular sequence.  Consequently, if we set $J_k = \( \df{1},\cdots, \df{k} \)$,  the sequences  \[ 0 \to \(R /  J_{k-1}\)( -\qdeg f + \qdeg x_k ) \stackrel{ \df{k}}{\longrightarrow} R /  J_{k-1} \to  R / J_k \to 0  \]  are exact for every $1 \leq k \leq n$.  Furthermore,  using the fact that the Hilbert series is additive across short exact sequences, the well-known identities $H_R(t)  = \prod_{i=1}^n \frac{1}{1-t^{\qdeg x_i}} \text{ and } H_{M(-s)}(t) = t^s  H_M(t)$ imply that
\begin{equation}
\label{HilbertSeries: e}
H_{R/\jac{f}}(t) = \prod \limits_{i=1}^n \frac{ 1 - t^{\qdeg f-\qdeg x_i} }{ 1 - t^{\qdeg x_i} }, 
\end{equation}
an identity that will play a key role in what follows.

\end{setup}

\begin{lemma} 
\label{Anurag'sTrick: L} Under Setup \ref{isoSing: setup}, we have that $\Colon{\bracket{\m}{e}}{\jac{f}} \setminus \bracket{\m}{e} \subseteq \Deg{R}{\geq (p^e + 1) \cdot \sum \qdeg x_i - n \cdot \qdeg f}$.
\end{lemma}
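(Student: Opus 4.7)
The plan is to pass to the Artinian Gorenstein quotient $A := R/\bracket{\m}{e}$ and exploit the perfect pairing arising from its one-dimensional socle, which sits in top degree $s := (p^e - 1) \sum \qdeg x_i$. Because both $\bracket{\m}{e}$ and $\jac{f}$ are homogeneous ideals for the given $\NN$-grading, so is the colon, and it suffices to treat a homogeneous element. For a homogeneous $g$ of degree $d$ lying in $\Colon{\bracket{\m}{e}}{\jac{f}} \setminus \bracket{\m}{e}$, the goal becomes to show $d \geq (p^e + 1) \sum \qdeg x_i - n \cdot \qdeg f$.

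The hypothesis on $g$ says that its image $\bar g \in A$ is a nonzero homogeneous element of degree $d$ annihilating the image $\bar J$ of $\jac{f}$. Since $A$ is a graded complete intersection on the regular sequence $x_1^{p^e}, \ldots, x_n^{p^e}$, it is Gorenstein Artinian; its socle $A_s$ is one-dimensional, spanned by the class of $(x_1 \cdots x_n)^{p^e - 1}$, and the multiplication pairing $A_d \times A_{s-d} \to A_s \cong \LL$ is perfect. I would choose $\bar h \in A_{s-d}$ with $\bar g \cdot \bar h \neq 0$; since $\bar g \cdot \bar J = 0$, the element $\bar h$ has nonzero image in $A/\bar J = R/(\jac{f} + \bracket{\m}{e})$, so this quotient has a nonzero component in degree $s - d$.

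Finally, I would compare with the top degree of $R/\jac{f}$, of which $R/(\jac{f} + \bracket{\m}{e})$ is a quotient. Using the Hilbert series identity \eqref{HilbertSeries: e} of Setup \ref{isoSing: setup}, the numerator $\prod_i (1 - t^{\qdeg f - \qdeg x_i})$ and denominator $\prod_i (1 - t^{\qdeg x_i})$ have matching leading signs $(-1)^n$, so the (polynomial) Hilbert series of $R/\jac{f}$ has degree exactly $T := n \cdot \qdeg f - 2 \sum \qdeg x_i$ with leading coefficient $1$. Consequently $s - d \leq T$, which rearranges to $d \geq s - T = (p^e + 1) \sum \qdeg x_i - n \cdot \qdeg f$, as required.

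The only substantive input is the Gorenstein (perfect-pairing) property of the complete intersection $R/\bracket{\m}{e}$, used to produce the witness $\bar h$; everything else is Hilbert-series bookkeeping made possible by the fact that the partials $\df{1}, \ldots, \df{n}$ form a regular sequence in Setup \ref{isoSing: setup}. The argument makes no use of the $F$-pure threshold itself and applies verbatim to any homogeneous ideal with the stated isolated-singularity hypothesis.
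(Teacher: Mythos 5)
Your argument is correct, and it is the ``abstract'' form of the argument the paper gives. The paper also starts from the Hilbert series identity \eqref{HilbertSeries: e} to see that $\Deg{R}{\geq N+1} \subseteq \jac{f}$ with $N = n\qdeg f - 2\sum \qdeg x_i$, but it then enlarges the colon to $\Colon{\bracket{\m}{e}}{\Deg{R}{\geq N+1}}$, which is a colon of monomial ideals, reduces to a monomial $\mu = x_1^{p^e-1-s_1}\cdots x_n^{p^e-1-s_n}$ outside $\bracket{\m}{e}$, and derives a contradiction by pairing $\mu$ with the complementary monomial $\mu_\circ = x_1^{s_1}\cdots x_n^{s_n}$, whose product with $\mu$ is the socle representative $(x_1\cdots x_n)^{p^e-1} \notin \bracket{\m}{e}$. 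That complementary monomial is exactly the dual element your perfect pairing on $A = R/\bracket{\m}{e}$ produces, so the two proofs are dual formulations of one idea: you bound the top nonvanishing degree of $R/(\jac{f}+\bracket{\m}{e})$ by that of $R/\jac{f}$, while the paper bounds degrees of elements of the colon directly. What your route buys is conceptual transparency and the observation (which you make) that nothing is special about $\jac{f}$ beyond being homogeneous and $\m$-primary; what the paper's route buys is elementarity, since it never needs to invoke Gorenstein duality for the complete intersection, only the explicit monomial pairing. One small shared caveat: both proofs really establish the containment for \emph{homogeneous} (resp.\ monomial) elements of the colon, which is the only case in which the lemma is applied (to $f^{\new{e}}$); for a genuinely inhomogeneous element of the set difference, low-degree components lying in $\bracket{\m}{e}$ could a priori violate the literal containment, so the homogeneous reading is the intended one.
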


\begin{proof} To simplify notation, set $J = \jac{f}$.  By \eqref{HilbertSeries: e},  the degree of $H_{R/J}(t)$ (a polynomial, as $\sqrt{J} = \m$) is $N:= n  \qdeg f - 2  \sum \qdeg x_i$, and so $\left[ R/J \right]_d = 0$  whenever $d \geq N+1$.  It follows that $\Deg{R}{\geq  N + 1} \subseteq J$, and to establish the claim, it suffices to show that
\begin{equation}
 \label{trick: e}
 \Colon{\bracket{\m}{e}}{\Deg{R}{\geq N+1}} \setminus \ \bracket{\m}{e} \subseteq \Deg{R}{\geq (p^e-1) \cdot \sum \qdeg x_i - N} = \Deg{R}{\geq (p^e + 1) \cdot \sum \qdeg x_i - n \cdot \qdeg f}.
\end{equation}

By means of contradiction, suppose \eqref{trick: e} is false.  Consequently, there exists a monomial \[\mu = x_1^{p^e-1 - s_1} \cdots x_n^{p^e-1-s_n} \in \Colon{\bracket{\m}{e}}{\Deg{R}{\geq N+1}}\] such that $\qdeg \mu \leq (p^e-1) \cdot \qdeg x_i - (N+1)$. This condition implies that the monomial $\mu_{\circ} := x_1^{s_1}\cdots x_n^{s_n}$ is in $\Deg{R}{\geq N+1}$, and as $\mu \in \Colon{\bracket{\m}{e}}{\Deg{R}{\geq N+1}}$, it follows that $\mu  \mu_{\circ}$ (which is apparently equal to $\(x_1\cdots x_n\)^{p^e-1})$ is in $\bracket{\m}{e}$, a contradiction.
\end{proof}

\begin{proposition}
\label{nubounds: P} 
In the setting of Setup \ref{isoSing: setup}, if $p \nmid \( \new{e}+1 \)$, then $\new{e} \geq \up{(p^e+1) \cdot \frac{ \sum \qdeg x_i }{ \qdeg f}-n}$. 
\end{proposition}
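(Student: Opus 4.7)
The plan is to show that $f^N \in (\bracket{\m}{e} : \jac{f}) \setminus \bracket{\m}{e}$ where $N = \new{e}$, and then apply Lemma \ref{Anurag'sTrick: L} to get a degree lower bound on $f^N$, which translates directly into a lower bound on $N$.

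First I would observe that, by definition of $\new{e}$, we have $f^{N} \notin \bracket{\m}{e}$ but $f^{N+1} \in \bracket{\m}{e}$. The key input is that $\bracket{\m}{e}$ is preserved by each partial derivative $\partial_i$: any generator $x_j^{p^e}$ satisfies $\partial_i(x_j^{p^e}) = 0$ (either because $i \neq j$, or because $p^e \equiv 0$ in characteristic $p$), and the Leibniz rule then shows that $\partial_i(\bracket{\m}{e}) \subseteq \bracket{\m}{e}$. Applying $\partial_i$ to $f^{N+1} \in \bracket{\m}{e}$ therefore gives
\[
(N+1)\, f^N\, \df{i} \in \bracket{\m}{e} \quad \text{for every } 1 \leq i \leq n.
\]

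Next I would use the hypothesis $p \nmid (N+1)$: it makes $(N+1)$ a unit in $\LL$, so we may cancel to obtain $f^N \df{i} \in \bracket{\m}{e}$ for every $i$; equivalently, $f^N \in \Colon{\bracket{\m}{e}}{\jac{f}}$. Since $f^N \notin \bracket{\m}{e}$, Lemma \ref{Anurag'sTrick: L} applies and yields $f^N \in \Deg{R}{\geq (p^e+1) \cdot \sum \qdeg x_i - n \cdot \qdeg f}$. Because $f^N$ is homogeneous of \qhns\ degree $N \cdot \qdeg f$, this containment forces
\[
N \cdot \qdeg f \geq (p^e+1) \cdot \sum \qdeg x_i - n \cdot \qdeg f,
\]
and dividing by $\qdeg f$, then taking the ceiling (as $N \in \NN$), completes the proof.

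The main (essentially only) delicate point is the step that transfers the inclusion $f^{N+1} \in \bracket{\m}{e}$ to $f^N \cdot \jac{f} \subseteq \bracket{\m}{e}$; this is where both the non-divisibility hypothesis $p \nmid (N+1)$ and the characteristic $p$ phenomenon $\partial_i(\bracket{\m}{e}) \subseteq \bracket{\m}{e}$ are crucial. Everything else is a purely formal consequence of Lemma \ref{Anurag'sTrick: L} together with homogeneity.
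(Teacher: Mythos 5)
Your proposal is correct and follows exactly the paper's argument: differentiate $f^{\new{e}+1} \in \bracket{\m}{e}$ using the Leibniz rule and the stability of Frobenius powers under partial derivatives, cancel the unit $\new{e}+1$, and then invoke Lemma \ref{Anurag'sTrick: L} together with homogeneity to extract the degree bound. No gaps.
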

\begin{proof} The Leibniz rule shows that $\partial_i\(\bracket{\m}{e} \) \subseteq \bracket{\m}{e}$, and so differentiating $f^{\new{e}+1} \in \bracket{\m}{e}$ shows that 
$ ( \new{e} + 1 ) \cdot f^{\new{e}} \cdot  \df{i} \in \bracket{\m}{e} \text{ for all $i$}$.
Our assumption that $p \nmid \new{e} + 1$ then implies that
$ f^{\new{e}} \in \Colon{\bracket{\m}{e}}{J} \setminus \bracket{\m}{e} \subseteq \Deg{R}{\geq (p^e + 1) \cdot \sum \qdeg x_i - n \cdot \qdeg f}$,
where the exclusion follows by definition, and the final containment by Lemma \ref{Anurag'sTrick: L}.  Therefore,  
\[ \qdeg f \cdot \new{e} \geq (p^e + 1) \cdot \sum \qdeg x_i - n \cdot \qdeg f,\] and the claim follows.
\end{proof}

\begin{corollary}
\label{nubounds: C}
In the setting of Setup \ref{isoSing: setup}, write $\lambda = \min \set{ \frac{ \sum \qdeg x_i}{\qdeg f},1} = \frac{a}{b}$, in lowest terms.  If $\digit{\fpt{f}}{e}$, the $e^{\th}$ digit of $\fpt{f}$, is not equal to $p-1$, then 
 \[ p^e \tr{\lambda}{e} - p^e \tr{\fpt{f}}{e} \leq n - \up{ \frac{ \lpr{\paren{a}{p^e}}{b} + a}{b} }.\] 

\end{corollary}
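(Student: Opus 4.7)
The plan is to combine three ingredients---Proposition \ref{Truncation: P}(1), which identifies $p^e \tr{\fpt{f}}{e}$ with $\new{e}$; the lower bound on $\new{e}$ from Proposition \ref{nubounds: P}; and the explicit formula for $\tr{\lambda}{e}$ in Lemma \ref{explicittruncation: L}---and then perform a short ceiling-function computation. No deeper input is needed.

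First I would verify the hypothesis of Proposition \ref{nubounds: P}. The identity $p^e \tr{\fpt{f}}{e} = \new{e}$ immediately yields the recursion $\new{e} = p \cdot \new{e-1} + \digit{\fpt{f}}{e}$, so $\new{e} \equiv \digit{\fpt{f}}{e} \pmod p$. Since $0 \leq \digit{\fpt{f}}{e} \leq p-1$, the standing assumption $\digit{\fpt{f}}{e} \neq p-1$ is equivalent to $p \nmid (\new{e}+1)$. Proposition \ref{nubounds: P} therefore applies, and, using $\sum \qdeg x_i / \qdeg f \geq \lambda = a/b$ together with $n \in \NN$, delivers $\new{e} \geq \up{(p^e+1)(a/b)} - n$.

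Next, I would set $r := \lpr{\paren{a}{p^e}}{b}$ and write $p^e a = bk + r$ for some non-negative integer $k$. Lemma \ref{explicittruncation: L} then gives $p^e \tr{\lambda}{e} = p^e \lambda - r/b = k$, while a direct computation shows $\up{(p^e+1)(a/b)} = \up{k + (r+a)/b} = k + \up{(r+a)/b}$. Subtracting yields
\[
p^e \tr{\lambda}{e} - p^e \tr{\fpt{f}}{e} \;=\; k - \new{e} \;\leq\; k - \bigl(k + \up{(r+a)/b} - n\bigr) \;=\; n - \up{\tfrac{r+a}{b}},
\]
which is the claimed inequality.

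The only mild subtlety is treating the cases $\lambda < 1$ and $\lambda = 1$ uniformly: when $\sum \qdeg x_i > \qdeg f$, one has $a/b = 1 < \sum \qdeg x_i / \qdeg f$, so the extracted bound $\new{e} \geq \up{(p^e+1)(a/b)} - n$ is strictly weaker than what Proposition \ref{nubounds: P} delivers, but it still holds and it is precisely what the subsequent arithmetic requires. With this in hand, the proof reduces to a straight chain of substitutions.
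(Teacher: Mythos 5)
Your proof is correct and follows essentially the same route as the paper's: verify $p \nmid (\new{e}+1)$ via Proposition \ref{Truncation: P}, invoke Proposition \ref{nubounds: P}, and convert the resulting lower bound on $\new{e}$ into the stated inequality using the explicit formula of Lemma \ref{explicittruncation: L}. The only difference is cosmetic: the paper splits into the cases $\lambda = 1$ and $\lambda < 1$, whereas you handle both uniformly via the inequality $\sum \qdeg x_i / \qdeg f \geq a/b$, which is a slightly cleaner bookkeeping of the same computation.
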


\begin{proof}
By Proposition \ref{Truncation: P}, $\new{e} = p^e  \tr{\fpt{f}}{e} \equiv \digit{\fpt{f}}{e} \bmod p$, and so the condition that $\digit{\fpt{f}}{e} \neq p-1$ is equivalent to the condition that $p \nmid \(\new{e} + 1\)$.  In light of this, we are free to apply Proposition \ref{nubounds: P}. 
In what follows, we set $\delta:= \(\sum \qdeg x_i\) \cdot (\qdeg f)^{-1}$.

First, suppose that $\min \set{ \delta, 1} = 1$, so that $a = b = 1$.    Then  $ \up{ \( \lpr{\paren{a}{p^e}}{b} + a \) \cdot b^{-1} } = 2,$ and so it suffices to show that $p^e \tr{1}{e} - p^e \tr{\fpt{f}}{e} \leq n-2$.    However, the assumption that $\min \set{ \delta, 1} = 1$ implies that $\delta \geq 1$, and Proposition \ref{nubounds: P} then shows that \begin{align*}  
p^e \cdot \tr{\fpt{f}}{e} = \new{e} \geq  \up{ (p^e+1) \cdot \delta - n} \geq \up{p^e+1 - n}  &= p^e - 1 + 2 - n \\ 
 & = p^e \cdot \tr{1}{e} + 2-n.
\end{align*}

If instead $\min \set{ \delta, 1} = \delta$, Proposition \ref{nubounds: P} once again shows that
\begin{align*} p^e \tr{\fpt{f}}{e} = \new{e} \geq \up{ \(p^e + 1\) \cdot \delta  - n} & = \up{p^e \cdot \delta + \delta - n } \\ 
& = \up{ p^e \cdot \( \tr{\delta}{e} + \frac{ \lpr{\paren{a}{p^e}}{b}}{b \cdot p^e} \) + \delta - n} \\ 
& = p^e \cdot \tr{\delta}{e} + \up{ \frac{\lpr{\paren{a}{p^e}}{b}}{b} + \delta} - n,
\end{align*}
the second to last equality following from Lemma \ref{explicittruncation: L}.
\end{proof}

\begin{example}[Illustrating that Corollary \ref{nubounds: C} is not an equivalence]
\label{WeirdLValues: E}
If $p=2$ and $f$ is any $\LL^{\ast}$-linear combination of $x_1^{15}, \cdots, x_5^{15}$, Corollary \ref{nubounds: C} states that if $\digit{\fpt{f}}{e} \neq 1$, then  $\Delta_e:= 2^e \tr{3^{-1}}{e} - 2^e \tr{\fpt{f}}{e} \leq 4$.  
We claim that the converse fails when $e=4$.  Indeed, a direct computation, made possible by \cite{Diagonals},  shows that $\fpt{f} = \frac{1}{8}$, and comparing the base $2$ expansions of $\fpt{f} = \frac{1}{8}$ and $\lambda = \frac{1}{3}$ shows that $\Delta_4 = 4$, even though $\digit{\fpt{f}}{4} = 1 = p-1$.
\end{example}

\subsection{Proofs of the main results} \label{mainProofs:  SS}

In this subsection, we return to the statements in Section \ref{fptDiscussion: Section} whose proofs were postponed.  For the benefit of the reader, we restate these results here.

\begin{MainR}
Fix an $\NN$-grading on $R$.  Consider a \qh polynomial $f$ with $\sqrt{\jac{f}} = \m$, and write  $\lambda:=\min \set{ \frac{ \sum \qdeg x_i}{\qdeg f}, 1} = \frac{a}{b}$ in lowest terms.
\begin{enumerate}
\item \label{MT1: p} If $\fpt{f} \neq \lambda$, then \[  \fpt{f} = \lambda  - \( \frac{ \lpr{\paren{a}{p^L}}{b} + b \cdot E }{b \cdot p^L}\) = \tr{ \lambda }{L} - \frac{E}{p^L} \]  for some $(L,E) \in \NN^2$ with $L \geq 1$ and $0 \leq E \leq n-1-\up{\frac{\lpr{\paren{a}{p^L}}{b} + a}{b}}$.
\item \label{MT2: p} If $p > (n-2) \cdot b$ and $p \nmid b$, then $1 \leq L \leq \order{p}{b}$; note that $\order{p}{1} = 1$. 
\item \label{MT3: p}
If $p > (n-2) \cdot b$ and $p > b$,  then $ a < \lpr{\paren{a}{p^e}}{b} \text{ for all } 1 \leq e \leq L-1.$ 
\item \label{MT4: p} If $p> (n-1) \cdot b$, then there exists a unique pair $(L,E)$ satisfying the conclusions of \eqref{MT1: p}.
\end{enumerate}
\end{MainR}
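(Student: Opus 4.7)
The plan is to build the closed form in Part (1) from the estimate in Corollary \ref{nubounds: C}, and then to bootstrap this to Parts (2)--(4).  For Part (1), I would take $L$ to be the smallest positive integer with $p^L \fpt{f} \in \ZZ$.  Such $L$ exists because Corollary \ref{boundsOnFirstDiff: C} forces $\set{\Delta_e}_{e \geq 1}$ to be non-decreasing and unbounded, so that once $\Delta_e \geq n$ the contrapositive of Corollary \ref{nubounds: C} forces $\digit{\fpt{f}}{e} = p - 1$; hence all sufficiently late digits of $\fpt{f}$ are $p - 1$, and $\fpt{f}$ has denominator a power of $p$.  By minimality $\digit{\fpt{f}}{L} \neq p - 1$, so unpacking the non-terminating expansion gives $p^L \fpt{f} = \new{L} + 1$.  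Setting $E := \Delta_L - 1$ and rewriting $\tr{\lambda}{L}$ via Lemma \ref{explicittruncation: L} yields both closed forms in the theorem;  $E \geq 0$ follows from $\Delta_L \geq 1$, and the upper bound on $E$ is Corollary \ref{nubounds: C} applied at position $L$.

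For Parts (2) and (3), the key step is the intermediate claim that $L = \ell$, where $\ell := \min\set{e : \Delta_e > 0}$.  Back-propagating along $\Delta_{e+1} = p \Delta_e + \digit{\lambda}{e+1} - \digit{\fpt{f}}{e+1}$ from $\Delta_L \leq n - 1$ and $\digit{\fpt{f}}{L} \leq p - 2$, together with the digit lower bound $\digit{\lambda}{k} \geq \lfloor p/b \rfloor \geq n - 2$ from $p > (n-2)b$, rules out $L > \ell$:  the case $L \geq \ell + 2$ contradicts the digit bound via further iteration, and the edge case $L = \ell + 1$ is eliminated by the inequality $\Delta_L \geq 2 + \digit{\lambda}{L} \geq n$, contradicting $\Delta_L \leq n - 1$.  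Part (2) then follows at once from $\ell \leq \order{p}{b}$ (Corollary \ref{boundsOnFirstDiff: C}).  For Part (3), $L = \ell$ gives $\tr{\fpt{f}}{e} = \tr{\lambda}{e}$ for all $e \leq L - 1$, hence $\new{e} = (a p^e - \lpr{\paren{a}{p^e}}{b})/b$ on this range;  combining with $\fpt{f} \geq \new{e}/(p^e - 1)$ from Proposition \ref{Truncation: P}(2) and the formula of Part (1) yields
\[ \lpr{\paren{a}{p^e}}{b} \ \geq \ a + \frac{(\lpr{\paren{a}{p^L}}{b} + bE)(p^e - 1)}{p^L}, \]
whose right side strictly exceeds $a$; since the left side is an integer, $\lpr{\paren{a}{p^e}}{b} > a$.

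For Part (4), suppose $(L_1, E_1) \neq (L_2, E_2)$ both satisfy Part (1) with $L_1 < L_2$.  Equating the two expressions produces
\[ E_2 \ = \ p^{L_2 - L_1} E_1 + \sum_{k = L_1 + 1}^{L_2} \digit{\lambda}{k}\, p^{L_2 - k}. \]
If $E_1 \geq 1$, then $E_2 \geq p^{L_2 - L_1} \geq p > (n - 1) b$, exceeding the bound on $E_2$.  If $E_1 = 0$, then $E_2 \geq \digit{\lambda}{L_1 + 1}\, p^{L_2 - L_1 - 1}$, and the sharper lower bound $\digit{\lambda}{L_1 + 1} \geq \lfloor p/b \rfloor \geq n - 1$ from $p > (n - 1)b$ forces $E_2 \geq n - 1$, again exceeding the allowed range.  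Uniqueness follows.

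The principal obstacle will be the back-propagation establishing $L = \ell$:  the case analysis is delicate and relies crucially on the digit lower bound $\digit{\lambda}{k} \geq n - 2$ being sharp enough to contradict the digit patterns forced by the recursion.  The two-variable case $n = 2$ requires the sharper observation $\Delta_L \leq 1$ (from Corollary \ref{nubounds: C}) which bypasses the digit bound; and the translation from $L = \ell$ to Part (3) via Proposition \ref{Truncation: P}(2) must handle the convention $\lpr{\paren{a}{p^e}}{b} = b$ when $b \mid a p^e$, which under $p \nmid b$ reduces to the degenerate case $b = 1$ where Part (3) is vacuous.
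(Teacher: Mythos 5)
Your proposal is correct and follows essentially the same route as the paper: the same choice of $L$ (the last position where a digit of $\fpt{f}$ differs from $p-1$), the same $E = \Delta_L - 1$ with bounds coming from Corollaries \ref{boundsOnFirstDiff: C} and \ref{nubounds: C}, and the same recursion-plus-digit-lower-bound argument forcing $\Delta_{L-1}=0$ (i.e.\ $L=\ell$) when $p > (n-2)\cdot b$. Your conclusions of parts \eqref{MT3: p} and \eqref{MT4: p} are phrased a little differently --- you deploy the inequality $\fpt{f} \geq \new{e}/(p^e-1)$ arithmetically where the paper compares digits via the second point of Proposition \ref{Truncation: P}, and you bound $E_2$ directly where the paper invokes uniqueness of non-terminating expansions --- but these are equivalent uses of the same inputs, and both check out.
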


\begin{proof} We begin by establishing \eqref{MT1: p}:  The two descriptions of $\fpt{f}$ are equivalent by Lemma \ref{explicittruncation: L}, and so it suffices to establish the identity in terms of truncations.  Setting $\Delta_e := p^e \tr{\lambda}{e} - p^e \tr{\fpt{f}}{e}$, Corollary \ref{boundsOnFirstDiff: C}, states that $\set{ \Delta_e}_{e \geq 1}$ is a non-negative, non-decreasing, unbounded sequence; in particular, $\min \set{ e : \Delta_e \neq 0 } \geq 1$ is well-defined, and we claim that \[ \ell:= \min \set{ e : \Delta_e \neq 0 } \leq L := \max \set{ e : \digit{\fpt{f}}{e} \neq p-1},\] the latter also being well-defined.  Indeed, set $\mu_e:= \up{ \frac{ \lpr{\paren{a}{p^e}}{b} + a}{b} }$.  As $1 \leq \mu_e \leq 2$, the sequence $\set{ n - \mu_e}_{e \geq 1}$ is bounded above by $n-1$, and therefore $\Delta_e > n - \mu_e$ for $e \gg 0$.  For such $e \gg 0$, Corollary \ref{nubounds: C} implies that $\digit{\fpt{f}}{e} = p-1$, which demonstrates that $L$ is well-defined.  Note that, by definition, $\Delta_{\ell} = \digit{\lambda}{\ell} - \digit{\fpt{f}}{\ell} \geq 1$,  so that $\digit{\fpt{f}}{\ell} \leq \digit{\lambda}{\ell} - 1 \leq p - 2$;  by definition of $L$, it follows that $\ell \leq L$.

As $\digit{\fpt{f}}{e} = p-1$ for $e \geq L+1$,  
\begin{equation}
\label{restatementMR: e}
\fpt{f}  = \tr{\fpt{f}}{L} + \frac{1}{p^L} = \tr{\lambda}{L} - \frac{\Delta_L}{p^L} + \frac{1}{p^L} = \tr{\lambda}{L} - \frac{E}{p^L},   
\end{equation} 
where $E := \Delta_L - 1$.  In order to conclude this step of the proof, it suffices to note that  
\begin{equation}
\label{3ineq: e}
1 \leq \Delta_{\ell} \leq \Delta_L \leq n - \mu_L \leq n-1;
\end{equation}
indeed, the second bound in \eqref{3ineq: e} follows from the fact that $L \geq \ell$, the third follows from Corollary \ref{nubounds: C}, and the last from the bound $1 \leq \mu_e \leq 2$. 

For point \eqref{MT2: p}, we continue to use the notation adopted above.  We  begin by showing that 
\begin{equation}
\label{deltavanishing: e} 
\Delta_{e} = 0 \text{ for all $0 \leq e \leq L-1$ whenever } p > (n-2) \cdot b.
\end{equation}
As the sequence $\Delta_e$ is non-negative and non-decreasing, it suffices to show that $\Delta_{L-1} = 0$.  Therefore, by way of contradiction, we suppose that $\Delta_{L-1} \geq 1$.  By definition $0 \leq \digit{\fpt{f}}{L} \leq p-2$, and hence 
\[ \Delta_L = p \cdot \Delta_{L-1} + \digit{\lambda}{L} - \digit{\fpt{f}}{L} \geq \digit{\lambda}{L} + 2.\]
Comparing this with \eqref{3ineq: e} shows that $\digit{\lambda}{L} + 2 \leq \Delta_L \leq n-1$, so that 
\[ \digit{\lambda}{L} \leq n-3.\]  
On the other hand, if $p > (n-2) \cdot b$, then it follows from the explicit formulas in Lemma \ref{explicittruncation: L} that 
\begin{equation} 
\label{largedigit: e}
\digit{\lambda}{e} = \frac{\lpr{\paren{a}{p^{e-1}}}{b} \cdot p - \lpr{\paren{a}{p^e}}{b}}{b} \geq \frac{p-b}{b} > \frac{(n-2)\cdot b - b}{b} = n-3 \text{ for every $e \geq 1$}. 
\end{equation}
In particular, setting $e = L$ in this identity shows that $\digit{\lambda}{L} > n-3$, contradicting our earlier bound.  

Thus, we conclude that \eqref{deltavanishing: e} holds, which when combined with \eqref{3ineq: e} shows that $L = \min \set{ e : \Delta_e \neq 0 }$.  In summary, we have just shown that $L  = \ell$ when $p > (n-2) \cdot b$.  If we assume further that $p \nmid b$, the desired bound $L = \ell \leq \order{p}{b}$ then follows from Corollary \ref{boundsOnFirstDiff: C}.

We now focus on point \eqref{MT3: p}, and begin by observing that  
\begin{equation} 
\label{fptnte: e}
\fpt{f} = \digit{\lambda}{1} : \cdots : \digit{\lambda}{L-1} : \digit{\lambda}{L} - \Delta_L : \overline{p-1}  \ (\base p) \text{ whenever $p > (n-2) \cdot b$.} \end{equation}  
Indeed, by \eqref{deltavanishing: e}, the first $L-1$ digits of $\fpt{f}$ and $\lambda$ agree, while $\digit{\fpt{f}}{e} = p-1$ for $e \geq L+1$, by definition of $L$.  Finally, \eqref{deltavanishing: e} shows that $\Delta_L = \digit{\lambda}{L}  - \digit{\fpt{f}}{L}$, so that $\digit{\fpt{f}}{L} = \digit{\lambda}{L} - \Delta_L$.

Recall that, by the second point of Proposition \ref{Truncation: P}, the first digit of $\fpt{f}$ is its smallest digit, and it follows from \eqref{fptnte: e} that $\digit{\lambda}{1} \leq \digit{\lambda}{e} \text{ for all $1 \leq e \leq L$,  with this inequality being strict for $e=L$.}$  However, it follows from the explicit formulas in Lemma \ref{explicittruncation: L} that whenever $p > b$, 
\begin{align*}
\digit{\lambda}{1} \leq \digit{\lambda}{e} & \iff a \cdot p - \lpr{\paren{a}{p}}{b} \leq \lpr{\paren{a}{p^{e-1}}}{b} \cdot p - \lpr{\paren{a}{p^e}}{b} \\ 
       & \iff a \leq \lpr{\paren{a}{p^{e-1}}}{b},
\end{align*}
where the second equivalence relies on the fact that $p>b$.  Summarizing, we have just shown that $a \leq \lpr{\paren{a}{p^{e-1}}}{b}$ for all $1 \leq e \leq L$  whenever $p > (n-2) \cdot b$ and $p> b$; relabeling our index, we see that \[ a \leq \lpr{\paren{a}{p^e}}{b} \text{ for all } 0 \leq e \leq L-1 \text{ whenever } p > (n-2) \cdot b \text{ and } p> b.\] 
It remains to show that this bound is strict for $1 \leq e \leq L-1$.  
By contradiction, assume that $a = \lpr{\paren{a}{p^e}}{b}$ for some such $e$.  In this case, $a \equiv a \cdot p^e \bmod b$, and as $a$ and $b$ are relatively prime, we conclude that $p^e \equiv 1 \bmod b$, so that $\order{p}{b} \mid e$.  However, by definition $ 1 \leq e \leq L-1 \leq \order{p}{b} -1$, where the last inequality follows point \eqref{MT2: p}.  Thus, we have arrived at a contradiction, and therefore conclude that our asserted upper bound is strict for $1 \leq e \leq L-1$.

To conclude our proof, it remains to establish the uniqueness statement in point \eqref{MT4: p}.  To this end, let $({L'}, {E'})$ denote any pair of integers satisfying the conclusions of point \eqref{MT1: p} of this Theorem;  that is, \[ \fpt{f} = \tr{\lambda}{{L'}} - {E'} \cdot p^{-{L'}} \text{ with } 1 \leq {E'} \leq n- 1 - \mu_{{L'}} \leq n-2.\]  
A modification of \eqref{largedigit: e} shows that $\digit{\lambda}{e} > n-2$, and hence that $\digit{\lambda}{e} \geq E' + 1$, whenever $p > (n-1) \cdot b$, and it follows that 
\[ \fpt{f} = \tr{\lambda}{{L'}} - {E'} \cdot p^{-{L'}} = . \digit{\lambda}{1} : \cdots : \digit{\lambda}{L'-1} : \digit{\lambda}{L'} - (E + 1) : \overline{p-1} \text{ whenever $p>(n-1) \cdot b$.}\]  
The uniqueness statement then follows from comparing this expansion with \eqref{fptnte: e} and invoking the uniqueness of non-terminating base $p$ expansions.
\end{proof}

\begin{UniformLBoundsR}
In the setting of Theorem \ref{Main: T}, if $\fpt{f} \neq \lambda$ and $p \nmid b$, then $p^{M} \cdot \fpt{f} \in \NN$, where $M :=2 \cdot \phi(b) + \up{\log_2(n-1)},$ and $\phi$ denotes Euler's phi function.
\end{UniformLBoundsR}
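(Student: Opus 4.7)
The plan is to establish the stronger statement that the truncation index $L$ appearing in Theorem \ref{Main: T}(1) satisfies $L \leq M$; once this is in hand, the identity $\fpt{f} = \tr{\lambda}{L} - E/p^L$ from that theorem immediately implies $p^M \cdot \fpt{f} \in \NN$, since $p^L \cdot \tr{\lambda}{L}$ is always an integer and $L \leq M$.

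First I would recall, from the proof of Theorem \ref{Main: T}(1), that $L = \max\set{e : \digit{\fpt{f}}{e} \neq p-1}$ is well-defined (the assumption $\fpt{f} \neq \lambda$ makes the sequence $\Delta_e := p^e \tr{\lambda}{e} - p^e \tr{\fpt{f}}{e}$ unbounded) and that the integer $\ell := \min \set{e : \Delta_e \neq 0}$ satisfies $\ell \leq L$. Corollary \ref{boundsOnFirstDiff: C} then yields $1 \leq \ell \leq s := \order{p}{b}$, and Euler's theorem gives $s \leq \phi(b)$. In particular, in the sub-case $L < \ell + s$, the estimate $L \leq 2\phi(b) - 1 \leq M$ is already immediate.

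The technical core, and the main obstacle, is controlling the ``overshoot'' $k := L - \ell - s \geq 0$ in the remaining case. My plan is to sandwich $\Delta_L$ from both directions: on the one hand, since $\digit{\fpt{f}}{L} \neq p-1$ by construction of $L$, Corollary \ref{nubounds: C} provides $\Delta_L \leq n - 1$; on the other hand, Lemma \ref{Difference: L}(2) (applicable because $p \nmid b$) forces $\Delta_L = \Delta_{\ell+s+k} \geq p^k + 1$. Combining these two estimates gives $p^k \leq n-2$, and hence $k \leq \log_p(n-2) \leq \lfloor \log_2(n-2) \rfloor \leq \lceil \log_2(n-1) \rceil$.

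Summing the three contributions $\ell$, $s$, and $k$ yields $L \leq 2\phi(b) + \lceil \log_2(n-1)\rceil = M$, as desired. The only mild delicacy I anticipate is handling the edge cases $n = 2$ and $n = 3$, where the $\log_2$ term is vacuous or degenerate; however, both fall under the ``$L < \ell + s$'' scenario handled separately above, so they pose no genuine difficulty.
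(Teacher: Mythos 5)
Your proposal is correct and follows essentially the same route as the paper: both arguments combine the lower bound $\Delta_{\ell+s+k}\geq p^k+1$ from Lemma \ref{Difference: L}(2) with the upper bound $\Delta_L\leq n-1$ from Corollary \ref{nubounds: C}, together with $\ell\leq s=\order{p}{b}\leq\phi(b)$ from Corollary \ref{boundsOnFirstDiff: C}, differing only in whether one fixes $k=\up{\log_p(n-1)}$ in advance (as the paper does) or defines $k=L-\ell-s$ and bounds it afterward (as you do). The one tiny inaccuracy — that $n=3$ necessarily falls under the case $L<\ell+s$ — is harmless, since your main estimate $p^k\leq n-2$ already handles $n=3$ by forcing $k=0$.
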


\begin{proof}  We adopt the notation used in the proof of Theorem \ref{Main: T}.  In particular, $\ell \leq L$ and $\fpt{f} \in p^{-L} \cdot \NN$.  Setting $s = \order{p}{b}$, and $k = \up{\log_p(n-1)}$ in Lemma \ref{Difference: L} shows that
\begin{equation}
\label{wrongwayineq: e} \Delta_{\ell + s + \up{\log_p(n-1)}} \geq p^{\up{\log_p(n-1)}} + 1 \geq n.
\end{equation}
 
By definition of $L$, Corollary \ref{nubounds: C} states that $\Delta_L \leq n-1$, and  as $\set{ \Delta_e}_{e \geq 1}$ is non-decreasing, \eqref{wrongwayineq: e} then shows that $L$ is bounded above by $\ell + s + \up{ \log_p(n-1)}$.  To obtain a uniform bound, note that $\ell \leq s$, by Corollary \ref{boundsOnFirstDiff: C}, while $s \leq \phi(b)$, by definition, and $\log_p(n-1) \leq \log_2(n-1)$, as $p \geq 2$.
\end{proof}

\begin{SecondaryR}
In the context of Theorem \ref{Main: T}, suppose that $\sum \qdeg x_i > \qdeg f$, so that $\rho:= \frac{\sum \qdeg x_i}{\qdeg f}$ is greater than $1$.  If $p> \frac{n-3}{\rho - 1}$, then $\fpt{f} = 1$.
\end{SecondaryR}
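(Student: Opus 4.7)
The plan is to argue by contradiction. Suppose $\fpt{f} < 1$; by Lemma \ref{generalfptBound: L}, this is the only alternative to $\fpt{f} = 1$, since $\fpt{f} \leq \min \set{\rho, 1} = 1$. I will show this assumption forces $p$ to be small, contradicting the hypothesis.

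The first step is to observe that $\fpt{f} < 1$ forces $\digit{\fpt{f}}{1} \leq p - 2$. Indeed, Proposition \ref{Truncation: P}(2) applied at $s = 2$ gives $\fpt{f} \leq p \cdot \fpt{f} - \digit{\fpt{f}}{1}$, equivalently $\digit{\fpt{f}}{1} \leq (p-1) \cdot \fpt{f}$. Since we are assuming $\fpt{f} < 1$, this rules out $\digit{\fpt{f}}{1} = p - 1$. Consequently $\new{1} = \digit{\fpt{f}}{1} \leq p - 2$, and $\new{1} + 1 \in \set{1, \dots, p - 1}$ ensures $p \nmid \new{1} + 1$.

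With this divisibility condition in hand, I can apply Proposition \ref{nubounds: P} at $e = 1$ to obtain $\new{1} \geq \up{(p+1)\rho - n}$. Combining this lower bound with the upper bound $\new{1} \leq p - 2$ and rearranging yields
\[ p(\rho - 1) \leq n - 2 - \rho. \]
On the other hand, the hypothesis $p > (n-3)/(\rho - 1)$ is equivalent to $p(\rho - 1) > n - 3$. Chaining these inequalities gives $n - 3 < n - 2 - \rho$, i.e., $\rho < 1$, which contradicts $\rho > 1$. The main subtlety is recognizing that a single-digit analysis at $e = 1$ already extracts the full strength of Proposition \ref{nubounds: P} needed to close the argument, rather than needing to iterate over all $e$.
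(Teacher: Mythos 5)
Your proof is correct and follows essentially the same route as the paper's: both arguments reduce to showing $\new{1}\leq p-2$ (so that $p\nmid \new{1}+1$) and then playing this against the lower bound $\new{1}\geq\up{(p+1)\rho-n}$ from Proposition \ref{nubounds: P} to contradict the hypothesis on $p$. The only (cosmetic) difference is that you obtain $\digit{\fpt{f}}{1}\leq p-2$ directly from Proposition \ref{Truncation: P}(2) at $s=2$, whereas the paper routes this through Corollary \ref{boundsOnFirstDiff: C}; both rest on the same underlying fact.
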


\begin{proof}  We begin with the following elementary manipulations, the first of which relies on the assumption that $\rho - 1$ is positive:  Isolating $n-3$ in our assumption that $p > (n-3) \cdot \(\rho - 1 \)^{-1} - 1$ shows that $(p+1) \cdot (\rho - 1) > n -3$, and adding $p+1$ and subtracting $n$ from both sides then shows that $(p+1) \cdot \rho - n > p - 2$; rounding up, we see that \begin{equation} \label{simplederivation: e} \up{(p+1) \cdot \rho - n} \geq p-1.\end{equation}  

Assume, by means of contradiction, that $\fpt{f} \neq 1$.  By hypothesis, $1 = \min \set{ \rho , 1 }$, and Corollary \ref{boundsOnFirstDiff: C} then states that $1 = \min \set{ e : p^e \tr{1}{e} - p^e \tr{\fpt{f}}{e} \geq 1}$; in particular, 
\[ \new{} = \digit{\fpt{f}}{1} = p \cdot \tr{\fpt{f}}{1} \leq p \tr{1}{1} -1 = p-2.\] However, this bound allows us to apply Proposition \ref{nubounds: P}, which when combined with \eqref{boundsOnFirstDiff: C}, implies that 
\[ \new{} \geq \up{(p+1) \cdot \rho - n } \geq p-1.\] Thus, we have arrived at a contradiction, which allows us to conclude that $\fpt{f}=1$.
\end{proof}

\section{Applications to log canonical thresholds}
\label{LCT: S}

Given a polynomial $\fQ$ over $\Q$, we will denote its \emph{log canonical threshold} by $\lct{\fQ}$. In this article, we will not need to refer to the typical definition(s) of $\lct{\fQ}$ (e.g., via resolution of singularities), and will instead rely on the limit in \eqref{fptlct: e} below as our definition.  However, so that the reader unfamiliar with this topic may better appreciate \eqref{fptlct: e}, we present the following characterizations.  In what follows, we fix $\fQ \in \Q[x_1, \cdots, x_n]$.   

\begin{enumerate}

\item  If $\pi: X \to  \mathbb{A}^n_{\Q}$ is a \emph{log resolution} of the pair $\( \mathbb{A}^n_{\Q}, \mathbb{V}(\fQ)\)$, then $\lct{\fQ}$ is the supremum over all $\lambda > 0$ such that the coefficients of the divisor $K_{\pi} - \lambda \cdot \pi^{\ast} \operatorname{div} (f)$ are all greater than $-1$; here, $K_{\pi}$ denotes the relative canonical divisor of $\pi$.  

\item For every $\lambda > 0$,  consider the function $\Gamma_{\lambda}(\fQ) : \mathbb{C}^n\to \RR$ given by \[(z_1, \cdots, z_n) \mapsto \left | f(z_1, \cdots, z_n) \right|^{-2 \lambda},\] where $| \cdot | \in \RR$ denotes the norm of a complex number; note that $\Gamma_{\lambda}(\fQ)$ has a pole at all (complex) zeros of $\fQ$.  In this setting, $\lct{\fQ} : = \sup \set{ \lambda : \Gamma_{\lambda}(\fQ) \text{ is locally $\RR$-integrable}},$ where here, ``locally $\RR$-integrable'' means that we identify $\mathbb{C}^n = \RR^{2n}$, and require that this function be (Lebesque) integrable in a neighborhood of every point in its domain.

\item {{The roots of the Bernstein-Sato polynomial $b_{\fQ}$ of $\fQ$ are all negative rational numbers, and $-\lct{\fQ}$ is the largest such root \cite{KollarPairs}.}}

\end{enumerate}

For more information on these invariants, the reader is referred to the surveys  \cite{BL2004, EM2006}.  We now recall the striking relationship between $F$-pure and log canonical thresholds:  Though there are many results due to many authors relating characteristic zero and characteristic $p>0$ invariants, the one most relevant to our discussion is the following theorem, which is due to Musta\c{t}\u{a}  and the fourth author.

\begin{theorem}
\label{MZuniformDiff: T}
 \cite[Corollary 3.5, 4.5]{MZ2012} \label{MZ}
Given an polynomial $\fQ$ over $\Q$, there exist constants $C \in \mathbb{R}_{>0}$ and $N \in \NN$ (depending only on $\fQ$) with the following property:  For $p \gg 0$, either $\fpt{\fp} = \lct{\fQ}$, or 
\[ \frac{1}{p^N} \leq \lct{\fQ} - \fpt{\fp} \leq \frac{C}{p}. \] 
\end{theorem}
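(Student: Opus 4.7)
My plan is to split into two regimes: the general case --- which is the Musta\c{t}\u{a}-Zhang theorem and is what the statement actually claims --- and the \qh isolated-singularity case, where Theorem \ref{Main: T} will let me recover the statement with explicit, optimal constants. Since the statement is a verbatim citation of \cite[Corollaries 3.5 and 4.5]{MZ2012}, the formal proof is by reference, but I will sketch both approaches because the second is what the paper will pursue in detail (as Theorem B).

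In the general case, I would take a log resolution $\pi\colon X \to \mathbb{A}^n_{\Q}$ of $(\mathbb{A}^n_{\Q}, \mathbb{V}(\fQ))$, spread it out over $\operatorname{Spec}\mathbb{Z}$, and specialize to characteristic $p$ for $p \gg 0$. The upper bound $\lct{\fQ} - \fpt{\fp} \leq C/p$ would then come from Hara-Yoshida-type identifications of reductions of multiplier ideals with test ideals, with $C$ controlled by the log discrepancies on $\pi$: the point is that for $\lambda \leq \lct{\fQ} - C/p$ one can produce, via $\pi$, elements witnessing $\fp^{\lfloor p^e \lambda \rfloor} \notin \bracket{\m}{e}$ for $e \gg 0$. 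The hard part --- and the main technical content of \cite{MZ2012} --- is the lower bound $1/p^N \leq \lct{\fQ} - \fpt{\fp}$, which requires extracting from $\pi$ a uniform (in $p$) bound on the denominator of $\fpt{\fp}$ whenever it falls short of $\lct{\fQ}$.

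In the \qh isolated-singularity case, Theorem \ref{Main: T} takes over and yields both bounds cleanly. Writing $\lct{\fQ} = a/b$ in lowest terms --- which equals $(\sum \qdeg x_i)/\qdeg \fQ$ when that value is at most $1$, and is otherwise $1$ for $p \gg 0$ by Theorem \ref{Secondary: T} --- that theorem gives, whenever $\fpt{\fp} \neq \lct{\fQ}$ and $p \gg 0$,
\[ \lct{\fQ} - \fpt{\fp} \;=\; \frac{\lpr{\paren{a}{p^L}}{b} + bE}{b\,p^L}, \qquad 1 \leq L \leq \order{p}{b},\; 0 \leq E \leq n-2. \]
From this, the upper bound follows from the coarse estimate $\lpr{\paren{a}{p^L}}{b} + bE \leq b(n-1) - a$ (which is the constraint linking $E$ to $\lpr{\paren{a}{p^L}}{b}$, combined with $b\lceil x/b\rceil \geq x$) together with $a \geq 1$ and $L \geq 1$, yielding the explicit $C = n-1-b^{-1}$; the lower bound follows from $\lpr{\paren{a}{p^L}}{b} \geq 1$ and $L \leq \order{p}{b} \leq \phi(b)$, giving a uniform exponent $N$ depending only on $b$. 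The expected main obstacle --- the denominator bound --- is in this setting entirely encoded in Theorem \ref{Main: T}, and this is where the real work will lie.
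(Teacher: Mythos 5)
Your proposal is correct and takes essentially the same approach as the paper: the statement itself is established only by citation to \cite{MZ2012}, which you correctly recognize, and your sketch of the quasi-homogeneous isolated-singularity case is precisely the paper's proof of Theorem \ref{DifferenceBounds: T} — deriving both bounds from the $(L,E)$ description in Theorem \ref{Main: T} and arriving at the same constants $C = n-1-b^{-1}$ and $N = \phi(b)+1$. No gaps.
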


Note that, as an immediate corollary of Theorem \ref{MZuniformDiff: T}, 
\begin{equation} 
\label{fptlct: e}
\fpt{\fp} \leq \lct{\fQ} \text{ for all } p \gg 0 \text{ and } \lim_{p \to \infty} \fpt{\fp} = \lct{\fQ}.  
\end{equation}
We point out that \eqref{fptlct: e} (which follows from the work of Hara and Yoshida) appeared in the literature well before Theorem \ref{MZuniformDiff: T} (see, e.g., \cite[Theorem 3.3, 3.4]{MTW2005}).

\subsection{Regarding uniform bounds}
Though the constants $C \in \mathbb{R}_{>0}$ and $N \in \NN$ appearing in Theorem \ref{MZuniformDiff: T} are known to depend only on $\fQ$, their determination is complicated (e.g., they depend on numerical invariants coming from resolution of singularities), and are therefore not explicitly described.  In Theorem \ref{DifferenceBounds: T} below, we give an alternate proof of this result for \qh polynomials with an isolated singularity at the origin;  in the process of doing so, we  also identify explicit values for $C$ and $N$.

\begin{theorem}
\label{DifferenceBounds: T}
If  $f_\Q \in \Q[x_1, \cdots, x_n]$ is \qh under some $\NN$-grading, with $\sqrt{\jac{\fQ}} = \m$, then $\lct{\fQ} = \min \set{ \frac{ \sum \qdeg x_i}{\qdeg f}, 1}$, which we write as $\frac{a}{b}$ in lowest terms.  Moreover, if $\fpt{f_p} \neq \lct{\fQ}$, then \[\frac{b^{-1}}{p^{\order{p}{b}}}\leq \lct{\fQ}- \fpt{\fp} \leq \frac{n-1-b^{-1}}{p} \text{ for $p \gg 0$},\]
where  $\order{p}{b}$ denotes the order of $p$ mod $b$ (which equals one when $b=1$, by convention).
\end{theorem}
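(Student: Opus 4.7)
The plan is to deduce both assertions from Theorem \ref{Main: T}, which supplies an explicit formula for $\fpt{\fp}$ in terms of $\lambda := \min\set{\tfrac{\sum \qdeg x_i}{\qdeg f},\, 1}$. I first observe that for $p \gg 0$ the quasi-homogeneous polynomial $f_p$ inherits $\sqrt{\jac{f_p}} = \m$ from $f_\Q$, so Theorem \ref{Main: T} applies to each such $f_p$.

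To verify $\lct{\fQ} = \lambda$, I would compute $\lim_{p \to \infty} \fpt{\fp}$, which is by definition $\lct{\fQ}$ via \eqref{fptlct: e}. If $\sum \qdeg x_i > \qdeg f$, Theorem \ref{Secondary: T} forces $\fpt{\fp} = 1 = \lambda$ for $p \gg 0$. Otherwise, write $\lambda = \tfrac{a}{b}$ in lowest terms and restrict to primes $p$ coprime to $b$; Theorem \ref{Main: T}(1) then produces the crude estimate $0 \leq \lambda - \fpt{\fp} \leq n/p$, forcing $\lim_{p \to \infty} \fpt{\fp} = \lambda$.

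For the lower bound, assuming $\fpt{\fp} \neq \lambda$, Theorem \ref{Main: T}(1) gives
\[\lct{\fQ} - \fpt{\fp} = \frac{\lpr{\paren{a}{p^L}}{b} + bE}{bp^L} \geq \frac{1}{bp^L},\]
using $\lpr{\paren{a}{p^L}}{b} \geq 1$ and $E \geq 0$; Theorem \ref{Main: T}(2) then supplies the uniform bound $L \leq \order{p}{b}$ once $p > (n-2)b$ and $p \nmid b$, and this finishes this step.

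The upper bound is the heart of the argument. Writing $r := \lpr{\paren{a}{p^L}}{b}$ and assuming $p > b$ with $p \nmid b$ (so that $1 \leq r \leq b-1$), the target becomes $\tfrac{r + bE}{bp^L} \leq \tfrac{n-1-b^{-1}}{p}$. For $L \geq 2$, the left side is bounded by $\tfrac{(b-1) + b(n-2)}{bp^2} < \tfrac{n-1}{p^2}$, which is dominated by the right side for $p$ large. For $L = 1$, I would invoke the refined bound $E \leq n-1 - \up{\tfrac{r+a}{b}}$ from Theorem \ref{Main: T}(1) in two subcases. If $r \leq b - a$, the ceiling equals $1$, so $E \leq n-2$ and $r + bE \leq (b-a) + b(n-2) = b(n-1) - a$, producing $\tfrac{n - 1 - a/b}{p} \leq \tfrac{n-1-b^{-1}}{p}$ since $a \geq 1$. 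If instead $r > b - a$, the ceiling equals $2$, so $E \leq n-3$ and $r + bE \leq (b-1) + b(n-3) = b(n-2) - 1$, yielding the strictly smaller bound $\tfrac{n-2-b^{-1}}{p}$. The principal obstacle is precisely this $L=1$ case analysis, where the sharp constant $n - 1 - b^{-1}$ requires using the coprimality $\gcd(a,b) = 1$ to upgrade the generic estimate $n-1$ to $n - 1 - a/b$.
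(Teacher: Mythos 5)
Your argument is correct and follows essentially the same route as the paper: both deduce the identity $\lct{\fQ}=\lambda$ and the two bounds directly from the formula and the constraints on $L$ and $E$ in Theorem \ref{Main: T} (together with Theorem \ref{Secondary: T} for the case $\sum \qdeg x_i > \qdeg f$). Two remarks. First, your split on $L=1$ versus $L\geq 2$ for the upper bound is unnecessary: when $b\geq 2$ one has $\lpr{\paren{a}{p^L}}{b}+bE\leq (b-1)+b(n-2)$, hence $\lct{\fQ}-\fpt{\fp}\leq \frac{n-1-b^{-1}}{p^L}\leq\frac{n-1-b^{-1}}{p}$ uniformly for all $L\geq 1$, which is how the paper argues; your finer $L=1$ subcases only recover a slightly better constant that is not needed. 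Second, your parenthetical ``$1\leq r\leq b-1$'' fails when $b=1$: by Convention \ref{lpr: Conv} the least positive residue then equals $b=1$, not $b-1=0$, so the subcase estimate $r+bE\leq (b-1)+b(n-3)$ is off by one there. The conclusion still holds, since for $a=b=1$ one has $\up{\frac{r+a}{b}}=2$, hence $E\leq n-3$ and $\lct{\fQ}-\fpt{\fp}=\frac{1+E}{p}\leq\frac{n-2}{p}=\frac{n-1-b^{-1}}{p}$ exactly --- this is precisely why the paper treats $\lct{\fQ}=1$ as a separate case, and it is the case in which the upper bound is attained (Remark \ref{SharpBounds: R}). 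You should make that case explicit rather than leaving it inside a claim that is literally false for $b=1$.
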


\begin{proof}
As the reduction of $\df{k}$ mod $p$ equals $\partial_k(\fp)$ for large values of $p$, the equality $\sqrt{\jac{\fQ}} = \m$ reduces mod $p$ for $p \gg 0$.  Taking $p \to \infty$, it follows from Theorem \ref{Main: T} and \eqref{fptlct: e} that $\lct{\fQ} = \min \set{ \frac{ \sum \qdeg x_i}{\qdeg f}, 1}$, and in light of this, Theorem \ref{Main: T} states that  
\begin{equation}
\label{uniformb: e}
\lct{\fQ} - \fpt{f_p} = \frac{ \lpr{\paren{a}{p^L}}{b}}{b \cdot p^L} + \frac{E}{p^L}.
\end{equation}
 
If $\lct{\fQ} \neq 1$, then  \[ \frac{1}{b \cdot p^L} \leq \frac{\lpr{\paren{a}{p^L}}{b}}{b \cdot p^L} \leq \frac{1-b^{-1}}{p^L}.\]  Furthermore, Theorem \ref{Main: T} implies that $1 \leq L \leq \phi(b) \text{ and } 0 \leq E \leq n-2$  for $p \gg 0$.
If instead $\lct{\fQ} = a = b = 1$, then $\lpr{\paren{a}{p^L}}{b} = \phi(b) = 1$, and hence\[ \frac{\lpr{\paren{a}{p^L}}{b}}{b \cdot p^L} = \frac{b^{-1}}{p^L}.\]  Moreover, in this case, Theorem \ref{Main: T} shows that $L = 1 \text{ and } 0 \leq E \leq n-3$ for $p \gg 0$. Finally, it is left to the reader to verify that substituting these inequalities into \eqref{uniformb: e} produces the desired bounds in each case.
\end{proof}

\begin{remark}[On uniform bounds]
Of course, $\order{p}{b} \leq \phi(b)$, where $\phi$ denotes Euler's phi function.  By enlarging $p$, if necessary, it follows that the lower bound in Theorem \ref{DifferenceBounds: T} is itself bounded below by $p^{-\phi(b)-1}$.  In other words, in the language of Theorem \ref{MZuniformDiff: T}, we may take $N= \phi(b) + 1$ and $C= n-1-b^{-1}$.
\end{remark}

\begin{remark}[Regarding sharpness]
\label{SharpBounds: R} The bounds appearing in Theorem \ref{DifferenceBounds: T} are sharp:  If $d > 2$ and $\fQ = x_1^d + \cdots + x_d^d$, then $\lct{\fQ} = 1$, and Theorem \ref{DifferenceBounds: T} states that 
\begin{equation} \label{diffexam: e} \frac{1}{p} \leq \lct{\fQ} - \fpt{\fp} \leq \frac{d-2}{p} \end{equation}
 whenever $\fpt{\fp} \neq 1$ and $p \gg 0$.  However, it is shown in \cite[Corollary 3.5]{Diagonals} that \[ \lct{\fQ} - \fpt{\fp} = 1 - \fpt{\fp} =  \frac{\lpr{p}{d} - 1}{p}\] whenever $p > d$.  If $d$ is odd and $p \equiv 2 \bmod d$, then the lower bound in \eqref{diffexam: e} is obtained, and  similarly, if $p \equiv d-1 \bmod d$, then the upper bound in \eqref{diffexam: e} is obtained; in both these cases, Dirichlet's theorem guarantees that there are infinitely many primes satisfying these congruence relations.
\end{remark}

\subsection{On the size of a set of bad primes}
\label{BadPrimes: SS}

In this subsection, we record some simple observations regarding the set of primes for which the $F$-pure threshold does \emph{not} coincide with the log canonical threshold, and we begin by recalling the case of elliptic curves:  Let $\fQ \in \Q[x,y,z]$ be a homogeneous polynomial of degree three with $\sqrt{ \jac{\fQ}} = \m$, so that $E := \mathbb{V}(f)$ defines an elliptic curve in $\mathbb{P}^2_{\Q}$.  As shown in the proof of Theorem \ref{DifferenceBounds: T}, the reductions $\fp \in \FF_p[x, y, z]$ satisfy these same conditions for $p \gg 0$, and thus define elliptic curves $E_p = \mathbb{V} (f_p) \subseteq \mathbb{P}^2_{\FF_p}$ for all $p \gg 0$.  Recall that the elliptic curve $E_p$ is called \emph{supersingular} if the natural Frobenius action on the local cohomology module $H^2_{(x,y,z)} \( \FF_p[x,y,z] / (f_p) \)$ is injective, or equivalently, if $(f_p)^{p-1} \notin (x^p, y^p, z^p)$ (see, e.g, \cite[Chapters V.3 and V.4]{Silverman} for these and other characterizations of supersingularity). Using these descriptions, one can show that $E_p$ is supersingular if and only if $\fpt{f_p} = 1$ \cite[Example 4.6]{MTW2005}.  In light of this, Elkies' well-known theorem on the set of supersingular primes, which states that $E_p$ is supersingular for infinitely many primes $p$, can be restated as follows.

\begin{theorem} \cite{Elkies} \ 
If $\fQ \in \Q[x,y,z]$ is as above, the set of primes $\set{ p : \fpt{\fp} \neq \lct{\fQ}}$ is infinite.
\end{theorem}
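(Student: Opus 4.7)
The plan is to translate the conclusion into a classical condition on the reduction $E_p := \mathbb{V}(\fp) \subset \mathbb{P}^2_{\FF_p}$, and then invoke Elkies' theorem as a black box. First, since $\fQ$ is homogeneous of degree $3$ in three variables with $\sqrt{\jac{\fQ}} = \m$, Theorem \ref{DifferenceBounds: T} gives $\lct{\fQ} = \min\set{3/3,\, 1} = 1$; the same Jacobian and degree hypotheses pass to $\fp$ for all $p \gg 0$, so Example \ref{calabiYau: E} applies with $n = d = 3$ and produces the rigid dichotomy $\fpt{\fp} \in \set{1,\, 1 - 1/p}$. Thus $\fpt{\fp} \neq \lct{\fQ}$ is equivalent to $\fpt{\fp} = 1 - 1/p$.

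The next step is to phrase this last condition algebraically. By Proposition \ref{Truncation: P}, $\fpt{\fp} = 1$ if and only if $\new{1} = p - 1$, equivalently $\fp^{p-1} \notin (x^p, y^p, z^p)$. Since $\fp^{p-1}$ is homogeneous of degree $3(p-1)$ in the standard grading, the only monomial of that degree avoiding $(x^p, y^p, z^p)$ is $x^{p-1} y^{p-1} z^{p-1}$. Hence $\fpt{\fp} = 1$ precisely when the coefficient of $x^{p-1} y^{p-1} z^{p-1}$ in $\fp^{p-1}$ is nonzero --- this is, up to a harmless unit, the classical Hasse invariant of $E_p$. By standard theory, vanishing of the Hasse invariant characterizes supersingularity of $E_p$ in the sense of the reference cited in the excerpt.

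Combining these two translations, for $p \gg 0$ we have that $\fpt{\fp} \neq \lct{\fQ}$ if and only if the Hasse invariant of $E_p$ vanishes. The infinitude of $\set{p : \fpt{\fp} \neq \lct{\fQ}}$ therefore reduces immediately to Elkies' theorem, which asserts that every elliptic curve defined over $\Q$ has infinitely many primes of supersingular reduction. The main obstacle is Elkies' theorem itself, which is a deep arithmetic result that we would take as a black box; all of the $F$-singularity-theoretic content consists of the Fedder-style unpacking of $\new{1}$ together with the dichotomy provided by Example \ref{calabiYau: E}.
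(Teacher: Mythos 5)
Your argument is correct and is essentially the paper's: both reduce the claim to Elkies' theorem on supersingular primes by identifying $\fpt{\fp} \neq \lct{\fQ} = 1$ with supersingularity of $E_p$ through the Fedder-type condition $\fp^{p-1} \in (x^p,y^p,z^p)$. Your explicit unpacking via the coefficient of $(xyz)^{p-1}$ (the Hasse invariant) and the dichotomy $\fpt{\fp} \in \set{1, 1-1/p}$ from Example \ref{calabiYau: E} is simply a more detailed rendering of the equivalence the paper quotes from \cite[Example 4.6]{MTW2005}.
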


Recall that given a set $S$ of prime numbers, the density
of $S$, $\delta(S)$, is defined as \[
\delta(S) = \lim_{n \to \infty}\frac{^{\#} \set{ p \in S : p \leq
    n}}{^{\#} \set{p : p \leq n}}. \]

In the context of elliptic curves over $\Q$, the set of primes $\set{ p :
  \fpt{f_p} \neq \lct{\fQ}}$, which is infinite by Elkies' result,  may be quite large (i.e., have density $\frac{1}{2}$), or may be quite small (i.e., have density zero);  see \cite[Example 4.6]{MTW2005} for more information.   This discussion motivates the following question.

\begin{question}  
\label{badprimes: Q}
For which polynomials $\fQ$ is the set of primes $\set{ p : \fpt{f_p} \neq \lct{\fQ} }$ infinite?  In the case that this set is infinite, what is its density?
\end{question}

As illustrated by the case of an elliptic curve, Question \ref{badprimes: Q} is quite subtle, and one expects it to be quite difficult to address in general.  However, as we see below, when the numerator of $\lct{\fQ}$ is not equal to $1$, one is able to give a partial answer to this question using simple methods.  Our main tool will be Proposition \ref{Truncation: P}, which provides us with a simple criterion for disqualifying a rational number from being an $F$-pure threshold.  We stress the fact that Proposition \ref{BadPrimes: P} is not applicable when $\lct{\fQ} = 1$, and hence sheds no light on the elliptic curve case discussed above.

\begin{proposition}
\label{BadPrimes: P}
Let $\fQ$ denote any polynomial over $\Q$, and write $\lct{\fQ} =
\frac{a}{b}$ in lowest terms.  If $a \neq 1$, then the set of primes for which $\lct{\fQ}$ is not an $F$-pure threshold (of \emph{any} polynomial) is infinite, and contains all
primes $p$ such that $p^e \cdot a \equiv 1 \bmod b$ for some
$e \geq 1$.  In particular,\[ \delta \( \set{ p : \fpt{f_p} \neq
  \lct{\fQ}} \) \geq \frac{1}{\phi(b)}.\]
\end{proposition}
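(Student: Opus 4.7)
The plan is to exploit Proposition \ref{Truncation: P}(2): if $\lambda = a/b$ were the $F$-pure threshold of \emph{any} polynomial in characteristic $p$, then for every $s \geq 1$,
\[
\lambda \;\leq\; .\,\digit{\lambda}{s} : \digit{\lambda}{s+1} : \cdots\ (\base p) \;=\; p^{s-1}\bigl(\lambda - \tr{\lambda}{s-1}\bigr).
\]
The key observation is that the hypothesis $p^e \cdot a \equiv 1 \bmod b$ forces the right-hand side at $s = e+1$ to be as small as possible, producing a contradiction with $a \geq 2$.

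Concretely, I would first apply Lemma \ref{explicittruncation: L}, which gives
\[
\lambda - \tr{\lambda}{e} \;=\; \frac{\lpr{\paren{a}{p^e}}{b}}{b \cdot p^e}.
\]
When $p^e \cdot a \equiv 1 \bmod b$, the residue $\lpr{\paren{a}{p^e}}{b}$ equals $1$, so $p^e(\lambda - \tr{\lambda}{e}) = 1/b$. Substituting into the inequality above (taking $s = e+1$) would yield $a/b = \lambda \leq 1/b$, which is absurd as soon as $a \geq 2$; and indeed $a \neq 1$ combined with $\lambda \in (0,1]$ and $\gcd(a,b) = 1$ forces $a \geq 2$ (otherwise $\lambda = 1$ would give $a = b = 1$). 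This disqualifies $\lambda$ from being the $F$-pure threshold of \emph{any} polynomial in such characteristics, establishing the first assertion.

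For the density statement, I would invoke Dirichlet's theorem on primes in arithmetic progressions. Since $\gcd(a,b) = 1$, the class of $a^{-1}$ in $(\Z/b\Z)^{\times}$ is well-defined, and Dirichlet provides infinitely many primes $p \equiv a^{-1} \bmod b$; each such prime automatically satisfies $p \cdot a \equiv 1 \bmod b$ (the case $e = 1$) and therefore lies in the bad set. Since this progression has natural density $1/\phi(b)$ among all primes, both the infinitude of the bad set and the claimed lower bound on its density follow at once.

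I do not anticipate a substantive obstacle: the entire argument reduces to noting that $\lpr{\paren{a}{p^e}}{b} = 1$ is the smallest possible value of this residue, and that this minimality is precisely what is needed to violate the tail bound coming from Proposition \ref{Truncation: P}(2). The only care required is in managing the index shift between $s$ in Proposition \ref{Truncation: P} and $e$ in the hypothesis, and in recording the elementary inequality $a \geq 2$.
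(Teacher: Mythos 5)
Your proof is correct and takes essentially the same route as the paper: both arguments disqualify $\lambda = \frac{a}{b}$ using Proposition \ref{Truncation: P}(2) together with the explicit formula of Lemma \ref{explicittruncation: L}, and both obtain the density bound by applying Dirichlet to the progression $p \equiv a^{-1} \bmod b$. The only (minor, and arguably cleaner) difference is that you apply the tail inequality directly to conclude $\frac{a}{b} \leq \frac{\lpr{\paren{a}{p^e}}{b}}{b} = \frac{1}{b}$, whereas the paper first extracts from it that the first digit must be the smallest and then compares $\digit{\lambda}{1}$ with $\digit{\lambda}{e+1}$, a comparison it only verifies for $p$ large relative to $b$.
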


\begin{proof}
As $a$ and $b$ are relatively prime, there exists $c \in \NN$ such that $a \cdot c \equiv 1 \bmod b$.  We claim that 
\begin{align*} \set{ p : p \equiv c \bmod b} & \subseteq  \set { p : p^e \cdot a \equiv 1 \bmod  b \text{ for some $e \geq 1$}} \\ & \subseteq \set{ p : \lct{\fQ} \text{ is not an $F$-pure threshold in characteristic $p>0$}}. \end{align*}
Once we establish this, the proposition will follow, as $\delta \(\set{ p : p \equiv c \bmod b}\) = \frac{1}{\phi(b)}$ by Dirichlet's theorem.  By definition of $c$, the first containment holds by setting $e=1$, and so it suffices to establish the second containment.  However, if $p^e \cdot a \equiv 1 \bmod b$ for some $e \geq 1$, then Lemma \ref{explicittruncation: L} shows that \[\digit{\lct{\fQ}}{e+1} = \frac{\lpr{\paren{a}{p^e}}{b} \cdot p - \lpr{\paren{a}{p^{e+1}}}{b}}{b} = \frac{ p - \lpr{\paren{a}{p^{e+1}}}{b}}{b}.\]  On the other hand, Lemma \ref{explicittruncation: L} also shows that \[   \digit{\lct{\fQ}}{1} = \frac{ a \cdot p - \lpr{\paren{a}{p}}{b} }{b},\] and as $a \geq 2$, by assumption, we see that $\digit{\lct{\fQ}}{1} > \digit{\lct{\fQ}}{e}$ for all $p \gg 0$.  In light of this, the second point of Proposition \ref{Truncation: P}, which shows that the first digit of an $F$-pure threshold must be its smallest, shows that $\lct{\fQ}$ could not be the $F$-pure threshold of \emph{any} polynomial in characteristic $p>0$.
\end{proof}

We conclude this section with the following example, which follows immediately from Corollary \ref{aCY: C}, and which illustrates a rather large family of polynomials whose set of ``bad" primes greatly exceeds the bound given by Proposition \ref{BadPrimes: P}.

\begin{example} \label{LargeClassBadPrimes: E}
If $\fQ \in \Q[x_1, \cdots, x_{d-1}]$ is homogeneous (under the standard grading) of degree $d$ with $\sqrt{\jac{\fQ}} = \m$, then $\set{ p : p \not \equiv 1 \bmod d} \subseteq \set{ p : \fpt{f_p} \neq \lct{\fQ} = 1- \frac{1}{d}}$.  In particular, \[ \delta \( \set{ p : \fpt{f_p} \neq \lct{\fQ}} \) \geq \delta \(\set{ p : p \not \equiv 1 \bmod d} \)= 1 - \frac{1}{\phi(d)}.\]
\end{example}

\section{A special case of ACC and local $\mathfrak{m}$-adic constancy for $F$-pure thresholds}

Motivated by the relationship between $F$-pure thresholds and log canonical thresholds,  Blickle, Musta\c{t}\u{a}, and Smith conjectured the following.

\begin{conjecture}
\label{ACC: Con}
\cite[Conjecture 4.4]{BMS2009} \ Fix an integer $n \geq 1$.
\begin{enumerate}
\item The set $\set{ \fpt{f} : f \in \LL[x_1, \cdots, x_n]}$ satisfies the ascending chain condition (ACC); i.e., it contains no strictly increasing, infinite sequence.   
\item  For every $f \in \LL[x_1, \cdots, x_n]$, there exists an integer $N$ (which may depend on $f$) such that \[ \fpt{f} \geq \fpt{f+g} \text{ for all $g \in \m^N$}. \]
\end{enumerate}
As discussed in \cite[Remark 4.5]{BMS2009}, the first conjecture implies the second, which states that the $F$-pure threshold function $f \mapsto \fpt{f}$ is locally constant (in the $\m$-adic topology).
\end{conjecture}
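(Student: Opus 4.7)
My plan is to split the conjecture and dispose of the implication (1) $\Rightarrow$ (2) first via a standard contrapositive argument: if (2) failed for some $f$, then iteratively producing perturbations $f_1 = f + g_1$, $f_2 = f_1 + g_2$ (with $g_i \in \mathfrak{m}^{N_i}$ and $N_i$ chosen so that each successor still satisfies the failure hypothesis) would yield $\fpt{f_i} > \fpt{f_{i-1}}$, i.e., an unbounded strictly increasing chain that contradicts (1). So I focus the remaining effort on (1), the ACC statement itself.

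For (1), my primary line of attack is via base-$p$ digit analysis, leveraging Proposition \ref{Truncation: P}. Recall that any $F$-pure threshold $\alpha$ satisfies the sharp constraint $\alpha \leq .\,\alpha^{(s)}\alpha^{(s+1)}\cdots$ (base $p$) for every $s \geq 1$; in particular its first digit is the minimum of all its digits. Suppose, for contradiction, we had a strictly increasing sequence $\alpha_i = \fpt{f_i} \nearrow \lambda$ in $(0,1]$. Since each truncation $\tr{\alpha_i}{e}$ takes values in $p^{-e}\NN$ and is non-decreasing and bounded for fixed $e$, a diagonal extraction lets us assume $\tr{\alpha_i}{e} \to \tr{\lambda}{e}$ for every $e$. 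I would then translate the digit-monotonicity of each $\alpha_i$ into a rigid constraint on $\lambda$, and show that once $i$ is so large that $\alpha_i$ and $\lambda$ agree on a long initial segment of digits, Proposition \ref{Truncation: P}(2) forces $\alpha_i = \lambda$, contradicting strict increase.

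As a backup strategy, I would try reducing to characteristic zero: for each $f_i \in \mathbb{F}_{p_i}[x_1, \ldots, x_n]$, lift to some $\widetilde{f_i} \in \mathbb{Q}[x_1, \ldots, x_n]$, invoke the Hacon-McKernan-Xu ACC theorem for log canonical thresholds, and transport information back via Theorem \ref{MZuniformDiff: T}. This requires either restricting to a single characteristic (so the $p_i$ are fixed) or extracting estimates sufficiently uniform in $p$ to support a diagonal argument across varying characteristics.

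The hard part, and the obstacle common to both strategies, is the complete lack of a priori control on the $f_i$: their degrees, supports, and singular loci are unconstrained. The paper's restricted ACC (Proposition \ref{ACC: P}) crucially uses that Theorem \ref{Main: T} pins down an essentially finite list of candidate values of $\fpt{f}$ per residue class of $p$ modulo the grading data. In full generality no such structure theorem is available, so the digit-stabilization argument cannot force the equality $\alpha_i = \lambda$, and the characteristic-zero lift cannot be made uniform enough to conclude. A realistic path forward would be to first establish a general structural description of the base-$p$ expansions of arbitrary $F$-pure thresholds — perhaps through test-ideal or $F$-jumping-number invariants — and only then complete either the digit comparison or the lifting argument above.
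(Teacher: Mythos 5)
You are attempting to prove Conjecture \ref{ACC: Con} itself, i.e.\ \cite[Conjecture 4.4]{BMS2009}. The paper records this as an \emph{open conjecture} and offers no proof of it; what it actually proves are special cases --- Proposition \ref{ACC: P} (ACC for the restricted family $W_{\preccurlyeq N}$ of quasi-homogeneous polynomials with isolated singularity, bounded weights, and $p \nmid \qdeg f$) and Propositions \ref{higherdegreefpt: P} and \ref{madicusc: P} (instances of part (2)) --- and every one of these rests on the structure theorem, Theorem \ref{Main: T}, together with the uniform bound of Corollary \ref{UniformLBounds: C}. So there is no proof in the paper to compare against, and your proposal, as you concede in your final paragraph, does not supply one either: it is a pair of strategies, each of which you correctly identify as blocked.

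To name the gaps concretely: for part (1), Proposition \ref{Truncation: P}(2) constrains each individual value $\alpha_i = \fpt{f_i}$, but it says nothing about the limit $\lambda$ of a strictly increasing sequence ($\lambda$ need not be an $F$-pure threshold at all), and agreement of $\alpha_i$ with $\lambda$ on an arbitrarily long initial string of digits does not force $\alpha_i = \lambda$; without a structural description of the possible expansions of $\fpt{f}$ for \emph{arbitrary} $f$ (the role played by Theorem \ref{Main: T} in Proposition \ref{ACC: P}, which pins the denominator down to $p^{M}$ with $M$ depending only on $n$ and $\lambda$), no contradiction materializes. The characteristic-zero backup also fails: for a fixed prime $p$ the set of $F$-pure thresholds is genuinely different from the set of log canonical thresholds (e.g.\ it contains numbers with denominator a power of $p$), and Theorem \ref{MZuniformDiff: T} only controls $\lct{\fQ} - \fpt{\fp}$ for $p \gg 0$ with constants depending on the polynomial, so nothing uniform can be transported back to a sequence $f_i$ living in one fixed characteristic. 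Finally, your sketch of (1) $\Rightarrow$ (2) is also unjustified as stated: the failure of (2) is a hypothesis about $f$, and it does not transfer to the perturbed polynomial $f_1 = f + g_1$, so the iteration producing $\fpt{f_i} > \fpt{f_{i-1}}$ does not get off the ground; the implication is the content of \cite[Remark 4.5]{BMS2009} (using $\m$-adic estimates of the type $\newfg{e} \geq \new{e}$ for $g \in \bracket{\m}{e}$), which is why the paper simply cites it. If your goal is to reprove something this paper actually establishes, the right target is Proposition \ref{ACC: P}, where Theorem \ref{Main: T} and Corollary \ref{UniformLBounds: C} supply exactly the finiteness your digit argument is missing.
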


In this section, we confirm the first conjecture for a restricted set of $F$-pure thresholds (see Proposition \ref{ACC: P}).  Additionally, we confirm the second in the case that $f$ is \qh under some $\NN$-grading with $\sqrt{ \jac{f}} = \m$ (see Propositions \ref{higherdegreefpt: P} and \ref{madicusc: P}).

\subsection{A special case of ACC}

\begin{definition}
 For every $\wt{} \in \N^n$, let $W_{\wt{}}$ denote the set of polynomials $f \in \LL[x_1, \cdots , x_n]$ satisfying the following conditions:
 
\begin{enumerate}
\item $\sqrt{\jac{f}} = \m$.
\item $f$ is \qh under the grading determined by $\( \qdeg x_1, \cdots, \qdeg x_n \) =\wt{}$.
\item $p \nmid \qdeg f$ (and hence, does not divide the denominator of $\min \set{ \frac{ \sum \qdeg x_i}{\qdeg f}, 1}$, in lowest terms).
\end{enumerate}
Given $N \in \N$, set $W_{\preccurlyeq N} := \bigcup_{\wt{} } W_{\wt{}}$, where the union is taken over all $\wt{} = (\wt{1}, \ldots, \wt{n})\in \N^n$ with $\wt{i} \leq N$ for each $1 \leq i \leq n$.
\end{definition}

\begin{proposition}   
\label{ACC: P}
For every $N \in \N$ and $\mu \in (0,1]$, the set 
\[ \set{ \fpt{f} : f \in W_{\preccurlyeq N} } \cap (\mu, 1] \] is finite.  In particular, this set of $F$-pure thresholds satisfies ACC.
\end{proposition}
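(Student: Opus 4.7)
The plan is to combine the arithmetic structure of $F$-pure thresholds given by Theorem \ref{Main: T} and Corollary \ref{UniformLBounds: C} with the observation that the ``target'' value $\lambda := \min\{\sum \wt{i}/\qdeg f,\, 1\}$ ranges over only finitely many rationals once the weight vector is restricted to $W_{\preccurlyeq N}$ and we insist $\fpt{f} > \mu$. Throughout, the prime $p$ is fixed by Convention \ref{generalConvention: Con}, so all bounds may depend on $p$, $n$, $N$, and $\mu$.

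Fix any $f \in W_{\preccurlyeq N}$ with $\fpt{f} > \mu$, and write $\lambda = a/b$ in lowest terms. By Lemma \ref{generalfptBound: L}, $\mu < \fpt{f} \leq \lambda$. The first (and main) step is the uniform bound on $b$: since $\wt{i} \leq N$, one has $\sum \wt{i} \leq nN$, and either $\lambda = 1$ (forcing $a = b = 1$) or $\lambda = \sum \wt{i}/\qdeg f < 1$, in which case the inequality $\lambda > \mu$ gives $\qdeg f < nN/\mu$. Since $b$ divides $\qdeg f$, this yields $b \leq \max\{1,\, nN/\mu\}$, and hence both $a$ and $b$, and therefore $\lambda$ itself, take only finitely many values as $f$ varies over $W_{\preccurlyeq N}$.

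With the uniform bound on $b$ in hand, I would then split on whether $\fpt{f} = \lambda$. If so, $\fpt{f}$ lies in the finite set of possible $\lambda$. Otherwise, the defining condition $p \nmid \qdeg f$ of $W_{\preccurlyeq N}$, together with $b \mid \qdeg f$, yields $p \nmid b$, so Corollary \ref{UniformLBounds: C} applies and gives $p^M \cdot \fpt{f} \in \NN$ with $M = 2\phi(b) + \up{\log_2(n-1)}$. Because $b$ is uniformly bounded, so is $M$, by some $M_0 = M_0(n, N, \mu)$; thus $\fpt{f} \in \tfrac{1}{p^{M_0}}\ZZ \cap (\mu, 1]$, which is a finite set since $p$ is fixed. (The degenerate case $n = 1$ may be handled separately: every $f \in W_{\preccurlyeq N}$ then equals $c \cdot x_1^{\qdeg f}$ with $\qdeg f$ bounded, and $\fpt{f} = 1/\qdeg f$.) Taking the union over the two cases gives the asserted finiteness, and ACC follows at once.

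I expect the only real subtlety to be isolating the uniform bound on $b$ from the grading data; the remainder is bookkeeping against the cited results, leveraging the fact that, since $p$ is fixed, the set $\tfrac{1}{p^{M_0}}\ZZ \cap (\mu, 1]$ is automatically finite.
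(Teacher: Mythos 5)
Your proposal is correct and follows essentially the same route as the paper's proof: bound $\qdeg f$ (hence the denominator $b$ and hence $\lambda$) uniformly via Lemma \ref{generalfptBound: L} and the constraints $\wt{i}\leq N$, $\fpt{f}>\mu$, then invoke Corollary \ref{UniformLBounds: C} to place $\fpt{f}$ in $p^{-M_0}\cdot\NN\cap(\mu,1]$ for a uniform $M_0$. The only differences are cosmetic (your explicit case split on $\lambda=1$, the remark that $p\nmid b$, and the $n=1$ aside), all of which are implicit in the paper's argument.
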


\begin{proof}
Fix $f \in W_{\preccurlyeq N}$ such that $\fpt{f} > \mu$.  By definition,  there exists an $\N$-grading on $\LL[x_1, \cdots, x_n]$ such that $\qdeg x_i \leq N$ for all $1 \leq i \leq N$, and under which $f$ is \qhns.  Moreover, by Lemma \ref{generalfptBound: L},
 \[ \mu < \fpt{f} \leq \frac{\sum_{i=1}^n \qdeg x_i}{\qdeg f} \leq \frac{n \cdot N}{\qdeg f}. \]  Consequently,  $\qdeg f \leq \frac{n \cdot N}{\mu}$, and it follows that \[ \lambda:= \min \set{\frac{\sum_{i=1}^n \qdeg x_i}{\qdeg f}, 1} \subseteq {S}:=(0,1] \cap \set{ \frac{a}{b} \in \Q : b \leq  \frac{n \cdot N}{\mu}},\] a finite set.  We will now show that $\fpt{f}$ can take on only finitely many values:  If $\fpt{f} \neq \lambda$, then by Corollary \ref{UniformLBounds: C}, there exists an integer $M_{\lambda}$, depending only on $\lambda$ and $n$, such that $p^{M_{\lambda}} \cdot \fpt{f} \in \NN$. If $\sM : = \max \set{ M_{\lambda} : \lambda \in {S}}$, it follows that $\fpt{f} \in \set{ \frac{a}{p^{\sM}} : a \in \N} \cap (0,1]$, a finite set.
\end{proof}

\subsection{A special case of local $\m$-adic constancy of the $F$-pure threshold function.}

Throughout this subsection, we fix an $\NN$-grading on $\LL[x_1, \cdots, x_n]$.

\begin{lemma}
\label{fptfg<fptf: L} Consider $f \in \m$ such that $p^L \cdot \fpt{f} \in \NN$ for some $L \in \NN$.  If $g \in \m$, then 
\[ \fpt{f+g} \leq \fpt{f} \iff (f+g)^{p^L \cdot \fpt{f}} \in \bracket{\m}{L}.\]
\end{lemma}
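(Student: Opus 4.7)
The plan is to translate both sides of the equivalence into concrete statements about the integers $\new{L}$ and $\newfg{L}$ via Proposition \ref{Truncation: P}, and then to pass between them using two complementary tools: the monotonicity of truncations (Lemma \ref{BasicProperties: L}(1)) for one direction, and the Frobenius trick for the other. Setting $N := p^L \cdot \fpt{f}$, the first step is to observe that $\tr{\fpt{f}}{L} = \frac{N-1}{p^L}$. This follows from Lemma \ref{explicittruncation: L} applied with $a = N$ and $b = p^L$, since $\lpr{Np^L}{p^L} = p^L$ per Convention \ref{lpr: Conv}. By Proposition \ref{Truncation: P}(1), this yields $\new{L} = p^L \cdot \tr{\fpt{f}}{L} = N - 1$; equivalently, $f^N \in \bracket{\m}{L}$ while $f^{N-1} \notin \bracket{\m}{L}$.

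For the forward direction, I would assume $\fpt{f+g} \leq \fpt{f}$, apply Lemma \ref{BasicProperties: L}(1) to obtain $\tr{\fpt{f+g}}{L} \leq \tr{\fpt{f}}{L} = \frac{N-1}{p^L}$, and then invoke Proposition \ref{Truncation: P}(1) applied to $f+g$ to translate this into $\newfg{L} \leq N - 1$, which is precisely the containment $(f+g)^N \in \bracket{\m}{L}$.

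For the converse, assume $(f+g)^N \in \bracket{\m}{L}$. The crucial step is to propagate this containment through Frobenius: since raising any element of $\bracket{\m}{L}$ to the $p^m$-th power in characteristic $p$ lands it in $\bracket{\m}{L+m}$, we obtain
\[ (f+g)^{Np^m} = \left( (f+g)^N \right)^{p^m} \in \bracket{\m}{L+m} \text{ for every } m \geq 0. \]
Hence $\newfg{L+m} \leq Np^m - 1$, and dividing by $p^{L+m}$ gives $\tr{\fpt{f+g}}{L+m} \leq \fpt{f} - \frac{1}{p^{L+m}}$. Letting $m \to \infty$ and using $\tr{\fpt{f+g}}{L+m} \nearrow \fpt{f+g}$ then yields $\fpt{f+g} \leq \fpt{f}$.

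The main conceptual point to watch is that the two directions are not perfectly symmetric. Knowing $\tr{\fpt{f+g}}{L} \leq \tr{\fpt{f}}{L}$ at the single level $L$ is \emph{not} in general sufficient to conclude $\fpt{f+g} \leq \fpt{f}$: one can easily produce $\alpha > \beta$ in $(0,1]$ with $\tr{\alpha}{L} \leq \tr{\beta}{L}$ at some fixed $L$. This is precisely why the converse requires the Frobenius argument to extend the containment to every higher level $L + m$ before passing to the limit, and why the hypothesis $p^L \cdot \fpt{f} \in \NN$, which forces the non-terminating expansion of $\fpt{f}$ to trail $(p-1)$'s past level $L$, plays an essential asymmetric role in the argument.
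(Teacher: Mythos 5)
Your proof is correct and follows essentially the same route as the paper: the forward direction via $\tr{\fpt{f}}{L} = \frac{p^L\fpt{f}-1}{p^L}$, monotonicity of truncations, and Proposition \ref{Truncation: P}(1), and the converse by propagating the containment through Frobenius to all levels $s \geq L$ and passing to the limit. The only cosmetic difference is that you derive the truncation identity from Lemma \ref{explicittruncation: L} rather than directly from $\fpt{f} = \frac{p^L\fpt{f}-1}{p^L} + \frac{1}{p^L}$; your closing remark on the asymmetry of the two directions is also accurate.
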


\begin{proof}
If $(f+g)^{p^L \cdot \fpt{f}} \in \bracket{\m}{L}$, then $(f+g)^{p^s \cdot \fpt{f}} \in \bracket{\m}{s}$ for $s \geq L$.  Consequently, $\newfg{s} < p^s \cdot \fpt{f}$ for $s \gg 0$, and hence $\fpt{f+g} \leq \fpt{f}$. We now focus on the remaining implication.  

By the hypothesis, $p^L \cdot \fpt{f} - 1 \in \NN$, and hence the identity $\fpt{f} = \frac{ p^L \cdot \fpt{f} - 1}{p^L} + \frac{1}{p^L}$ shows that  \[ \tr{\fpt{f}}{L} = \frac{p^L \cdot \fpt{f} - 1}{p^L}.\]  
If $\fpt{f+g} \leq \fpt{f}$, the preceding identity and Proposition \ref{Truncation: P} show that 
\[ \newfg{L} = p^L \tr{ \fpt{f+g} }{L} \leq p^L \tr{\fpt{f}}{L} = p^L \fpt{f} - 1,\]  
and consequently, this bound for $\newfg{L}$ shows that $(f+g)^{p^L \fpt{f} }  \in \bracket{\m}{L}$.
\end{proof}

\begin{lemma} \label{boundNotInBracket: L}
If $h$ is homogeneous and $h \notin \bracket{\m}{e}$, then $\qdeg h \leq (p^e -1 ) \cdot \sum_{i=1}^n \qdeg x_i$.
\end{lemma}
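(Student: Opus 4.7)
The plan is to mimic the monomial-support argument already used in the proof of Lemma \ref{generalfptBound: L}. The key observation is that $\bracket{\m}{e} = (x_1^{p^e}, \ldots, x_n^{p^e})$ is a monomial ideal, so a polynomial lies in $\bracket{\m}{e}$ if and only if every monomial in its support does. In particular, a monomial $x_1^{a_1} \cdots x_n^{a_n}$ fails to be in $\bracket{\m}{e}$ precisely when $a_i \leq p^e - 1$ for every $i$.

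First, I would use the hypothesis $h \notin \bracket{\m}{e}$ to extract a monomial $\mu = x_1^{a_1}\cdots x_n^{a_n}$ in the support of $h$ that is itself not in $\bracket{\m}{e}$; by the observation above, this forces $a_i \leq p^e - 1$ for each $i$. Next, I would use the homogeneity of $h$ under the given $\NN$-grading: every monomial in the support of $h$ has the same weighted degree as $h$. Therefore
\[
\qdeg h = \qdeg \mu = \sum_{i=1}^n a_i \cdot \qdeg x_i \leq \sum_{i=1}^n (p^e - 1) \cdot \qdeg x_i = (p^e - 1) \cdot \sum_{i=1}^n \qdeg x_i,
\]
which is the desired bound.

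There is no real obstacle here; the only subtle point worth mentioning is that one must invoke the fact that $\bracket{\m}{e}$ is a \emph{monomial} ideal to guarantee the existence of a monomial witness $\mu$ in the support of $h$, rather than simply a polynomial one. This is precisely the step that makes the support-by-support reduction legitimate, and after this the conclusion follows by a one-line weighted-degree comparison identical in spirit to the bound on $\new{e}$ established in Lemma \ref{generalfptBound: L}.
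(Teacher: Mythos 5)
Your proof is correct and follows essentially the same route as the paper: extract a supporting monomial of $h$ outside the monomial ideal $\bracket{\m}{e}$, note that its exponents are all at most $p^e-1$, and compare weighted degrees using homogeneity. The only cosmetic difference is that the paper writes the exponents as $p^e - a_i$ with $a_i \geq 1$, whereas you write them as $a_i \leq p^e - 1$; your explicit remark that one needs $\bracket{\m}{e}$ to be a monomial ideal to produce the monomial witness is a welcome clarification of a step the paper leaves implicit.
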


\begin{proof}
Every supporting monomial of $h$ is of the form $x_1^{p^e-a_1} \cdots x_n^{p^e-a_n}$, where each $a_i\geq 1$.  Then
\[\qdeg h = \sum_{i=1}^n (p^e-a_i)  \qdeg x_i  \leq (p^e-1)  \sum_{i=1}^n \qdeg x_i .\]

\end{proof}

\begin{proposition}
\label{fptf<fptfg: P}  
Fix $f\in\m$ \qhns.  If $g \in \Deg{R}{\geq \qdeg f+1}$, then $\fpt{f} \leq \fpt{f+g}$.   
\end{proposition}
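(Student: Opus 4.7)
The plan is to show the stronger statement that $\nu_{f+g}(p^e) \geq \nu_f(p^e)$ for every $e \geq 1$, which upon dividing by $p^e$ and letting $e \to \infty$ yields $\fpt{f+g} \geq \fpt{f}$.

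First I would fix $e \geq 1$ and set $N := \new{e}$, so that by definition $f^N \notin \bracket{\m}{e}$. It then suffices to show that $(f+g)^N \notin \bracket{\m}{e}$, since this forces $\newfg{e} \geq N$. To this end, I would expand
\[ (f+g)^N = \sum_{k=0}^N \binom{N}{k} f^{N-k} g^k \]
and study the $\NN$-graded decomposition of the right-hand side. Decompose $g = \sum_{j \geq \qdeg f + 1} g_j$ into its homogeneous components. Since $f$ itself is homogeneous of degree $d := \qdeg f$, every summand $f^{N-k} g_{j_1} \cdots g_{j_k}$ with $k \geq 1$ is homogeneous of degree $(N-k)d + j_1 + \cdots + j_k > Nd$. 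Consequently, the degree-$Nd$ homogeneous component of $(f+g)^N$ is exactly $f^N$.

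The next key observation is that $\bracket{\m}{e}$ is a homogeneous ideal under any $\NN$-grading on $R$, since it is generated by the monomials $x_i^{p^e}$, each of which is homogeneous. Therefore, membership in $\bracket{\m}{e}$ can be checked one graded component at a time. If $(f+g)^N$ were contained in $\bracket{\m}{e}$, then its degree-$Nd$ component $f^N$ would also lie in $\bracket{\m}{e}$, contradicting $N = \new{e}$. Hence $(f+g)^N \notin \bracket{\m}{e}$, completing the argument.

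I do not expect any real obstacle here: the only thing to verify carefully is the graded decomposition of $(f+g)^N$, which is mechanical given that each $g_j$ has degree strictly exceeding $d$, and the fact that Frobenius powers of the maximal ideal are homogeneous under any $\NN$-grading. The whole proof is essentially a lowest-degree component argument that isolates $f^N$ inside $(f+g)^N$.
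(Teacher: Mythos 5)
Your proof is correct and follows essentially the same route as the paper: both isolate $f^N$ as the lowest-degree piece of $(f+g)^N$ (every term $f^{N-k}g^k$ with $k\geq 1$ having degree strictly greater than $N\qdeg f$) and conclude that $(f+g)^N\notin\bracket{\m}{e}$, hence $\newfg{e}\geq\new{e}$. Your phrasing via homogeneity of the monomial ideal $\bracket{\m}{e}$ is a clean way to package the paper's ``no cancellation is possible'' step, but it is the same degree-counting argument.
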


\begin{proof}  
If suffices to show that  for every $e \geq 1$, $\new{e} \leq \newfg{e}$; i.e., if $N := \new{e}$, then $(f+g)^N \notin \bracket{\m}{e}$.   
Suppose, by way of contradiction, that $(f+g)^N = f^N + \sum_{k = 1}^N \binom{N}{k} f^{N-k}g^k \in \bracket{\m}{e}$;  note that, as $f^N \notin \bracket{\m}{e}$ by definition, each monomial summand of $f^N$ must cancel with one of $\sum_{k = 1}^N \binom{N}{k} f^{N-k}g^k$.  
However, for any monomial summand $\mu$ of any $f^{N-k} g^k$, $k \geq 1$, \[ \qdeg \mu \geq (N-k) \qdeg f + k( \qdeg f +1) > N \qdeg f = \qdeg(f^N),\] and such cancelation is impossible.
\end{proof}

\begin{lemma}
\label{SubtleExponent: L}
Fix $f\in \m$ \qh such that $\lambda := \frac{\sum \qdeg x_i }{\qdeg f} \leq 1$.
If $(p^e-1) \cdot  \lambda \in \N$ and $g \in \Deg{R}{\geq \qdeg f+1}$,  then $(f+g)^{p^e \tr{\lambda}{e}} \equiv f^{p^e \tr{\lambda}{e}} \bmod \bracket{\m}{e}$.
\end{lemma}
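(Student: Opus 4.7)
Set $M := p^e \tr{\lambda}{e}$ and expand
\[(f+g)^M = f^M + \sum_{k=1}^M \binom{M}{k} f^{M-k} g^k.\]
My plan is to show that each summand for $k \geq 1$ lies in $\bracket{\m}{e}$ for degree reasons, by combining the hypothesis $(p^e-1)\lambda \in \N$ with Lemma \ref{boundNotInBracket: L}.

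First, I would convert the periodicity hypothesis into a clean identity: writing $\lambda = \tfrac{a}{b}$ in lowest terms, the condition $(p^e-1) \lambda \in \N$ forces $b \mid p^e - 1$, hence $\order{p}{b} \mid e$. Lemma \ref{BasicProperties: L}(3) then yields the key equality $M = p^e \tr{\lambda}{e} = (p^e - 1) \lambda$, and multiplying by $\qdeg f$ gives
\[M \cdot \qdeg f \;=\; (p^e - 1) \cdot \sum_{i=1}^n \qdeg x_i.\]

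Next, I would decompose $g$ into its homogeneous components, each of degree $\geq \qdeg f + 1$. For any monomial summand $\mu$ of $f^{M-k} g^k$ with $k \geq 1$, the degree satisfies
\[\qdeg \mu \;\geq\; (M-k)\, \qdeg f + k\,(\qdeg f + 1) \;=\; M\cdot \qdeg f + k \;\geq\; (p^e - 1) \sum_{i=1}^n \qdeg x_i + 1.\]
Thus every \qh component of $\binom{M}{k} f^{M-k} g^k$ has degree strictly greater than $(p^e - 1) \sum \qdeg x_i$, and Lemma \ref{boundNotInBracket: L} (in its contrapositive form) places it inside $\bracket{\m}{e}$. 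Summing over $k \geq 1$ gives the desired congruence.

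There is essentially no main obstacle here: the proof is a one-step degree count, with the only subtlety being the invocation of Lemma \ref{BasicProperties: L}(3) to turn the divisibility hypothesis $(p^e-1)\lambda \in \N$ into the exact identity $p^e \tr{\lambda}{e} = (p^e-1)\lambda$. Without that identity one would only know $M \cdot \qdeg f \leq (p^e-1)\sum \qdeg x_i$, which would fail to give the strict inequality needed to apply Lemma \ref{boundNotInBracket: L}.
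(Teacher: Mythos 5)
Your proposal is correct and follows essentially the same route as the paper: both use the binomial expansion, derive the identity $p^e\tr{\lambda}{e} = (p^e-1)\lambda$ from Lemma \ref{BasicProperties: L} via the hypothesis $(p^e-1)\lambda\in\N$, and then kill each term $f^{M-k}g^k$ with $k\geq 1$ by the degree count $\qdeg\mu \geq M\cdot\qdeg f + k > (p^e-1)\sum\qdeg x_i$ together with Lemma \ref{boundNotInBracket: L}. The only cosmetic difference is that the paper argues by contradiction while you use the contrapositive directly.
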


\begin{proof}  We claim that 
\begin{equation}
\label{subtle1: e}
f^{p^e \tr{\lambda}{e}-k} g^k \in \bracket{\m}{e} \text{ for all } 1 \leq k \leq p^e  \tr{\lambda}{e} . 
\end{equation}

Indeed, suppose that \eqref{subtle1: e} is false. 
As $g \in \Deg{R}{\geq \qdeg f+1}$ and $\mu$ is a supporting monomial 
of $f^{p^e \tr{\lambda}{e} - k}  g^k$, we also have that 
\begin{equation} \label{lowerDegBound:E}
\qdeg \mu \geq \qdeg f \cdot  \ \( p^e \tr{\lambda}{e}-k \) + (\qdeg f +1) \cdot k = \qdeg f \cdot  p^e \tr{\lambda}{e} + k.
\end{equation}
However, as $(p^e - 1) \cdot \lambda \in \N$, it follows from  Lemma \ref{BasicProperties: L} that $ p^e \tr{\lambda}{e} = (p^e - 1) \cdot  \lambda$. Substituting this into \eqref{lowerDegBound:E} shows that 
\[ \qdeg \mu \geq \qdeg f \cdot (p^e -1) \cdot  \lambda + k = k +  (p^e-1)  \sum_{i=1}^n \qdeg x_i, \] 
which contradicts Lemma \ref{boundNotInBracket: L} as $k \geq 1$. 
Thus, \eqref{subtle1: e} holds, and it follows from the Binomial Theorem that $(f+g)^{p^e \tr{\lambda}{e}} \equiv f^{p^e \tr{\lambda}{e}} \bmod \bracket{\m}{e}$.
\end{proof}

We are now able to prove our first result on the $\m$-adic constancy of the $F$-pure threshold function, which does not require the isolated singularity hypothesis.

\begin{proposition}
\label{higherdegreefpt: P}
Fix $f\in \m$ \qh such that $\lambda := \frac{\sum \qdeg x_i }{\qdeg f} \leq 1$, and suppose that either $\fpt{f} = \lambda$, or $\fpt{f} = \tr{\lambda}{\Ell}$ and $(p^{\Ell} - 1) \cdot \lambda \in \N$ for some $L\geq 1$.
Then $\fpt{f+g} = \fpt{f}$ for each $g \in \Deg{R}{\geq \qdeg f +1}$.
\end{proposition}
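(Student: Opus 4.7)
The plan is to prove $\fpt{f+g} \leq \fpt{f}$ in each hypothesized case, since Proposition \ref{fptf<fptfg: P} already supplies $\fpt{f+g} \geq \fpt{f}$. I would begin with the key observation, useful in both cases: because $f$ is homogeneous of degree $\qdeg f$ and every homogeneous component of $g$ has degree strictly greater than $\qdeg f$, every homogeneous component of $(f+g)^N = \sum_k \binom{N}{k} f^{N-k} g^k$ sits in degree at least $(N-k)\qdeg f + k(\qdeg f + 1) = N \qdeg f + k \geq N \qdeg f$. Hence every supporting monomial of $(f+g)^N$ has $\NN$-degree at least $N \qdeg f$.

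In Case 1, where $\fpt{f} = \lambda$, I would invoke this observation as follows. If $(f+g)^N \notin \bracket{\m}{e}$, then some supporting monomial $\mu$ lies outside $\bracket{\m}{e}$, which forces each exponent of $\mu$ to be at most $p^e - 1$, and so $\qdeg \mu \leq (p^e - 1) \sum \qdeg x_i$ by Lemma \ref{boundNotInBracket: L} applied to $\mu$. Combined with $\qdeg \mu \geq N \qdeg f$, this gives $N \leq (p^e - 1) \lambda$, so $\newfg{e} \leq (p^e - 1) \lambda$. Letting $e \to \infty$ yields $\fpt{f+g} \leq \lambda = \fpt{f}$.

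In Case 2, where $\fpt{f} = \tr{\lambda}{\Ell}$ and $(p^{\Ell} - 1)\lambda \in \N$, I would instead apply Lemma \ref{SubtleExponent: L} with $e = \Ell$ to obtain $(f+g)^{p^{\Ell} \tr{\lambda}{\Ell}} \equiv f^{p^{\Ell} \tr{\lambda}{\Ell}} \pmod{\bracket{\m}{\Ell}}$. Since $\fpt{f} = \tr{\lambda}{\Ell}$ equals $c/p^{\Ell}$ for the positive integer $c := p^{\Ell} \tr{\lambda}{\Ell}$, the non-terminating expansion of $\fpt{f}$ truncated at position $\Ell$ is $(c-1)/p^{\Ell}$, so Proposition \ref{Truncation: P}(1) gives $\new{\Ell} = c-1$. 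By the very definition of $\new{\Ell}$, this means $f^c \in \bracket{\m}{\Ell}$, whence $(f+g)^c \in \bracket{\m}{\Ell}$ as well. Raising to the $p^k$-th Frobenius power propagates the containment: $(f+g)^{c p^k} \in \bracket{\m}{\Ell+k}$ for every $k \geq 0$, so $\newfg{\Ell+k} < c p^k = p^{\Ell+k} \tr{\lambda}{\Ell}$. Dividing by $p^{\Ell+k}$ and letting $k \to \infty$ produces $\fpt{f+g} \leq \tr{\lambda}{\Ell} = \fpt{f}$.

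The main obstacle I anticipate is Case 1, since $f+g$ need not be homogeneous and one cannot directly cite Lemma \ref{generalfptBound: L}; the degree-counting argument above substitutes for it by exploiting that the lowest-degree piece of $(f+g)^N$ is precisely $f^N$. Case 2 is then comparatively routine once one recognizes that the hypothesis $(p^{\Ell}-1)\lambda \in \N$ is exactly what is needed to invoke Lemma \ref{SubtleExponent: L}, and that the exponent $p^{\Ell}\tr{\lambda}{\Ell}$ coincides with $\new{\Ell}+1$, so the Frobenius-stable containment $f^{\new{\Ell}+1} \in \bracket{\m}{\Ell}$ transfers cleanly to $f+g$.
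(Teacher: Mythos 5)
Your proposal is correct and follows essentially the same route as the paper: Proposition \ref{fptf<fptfg: P} for the inequality $\fpt{f}\leq\fpt{f+g}$, the degree count via Lemma \ref{boundNotInBracket: L} for the case $\fpt{f}=\lambda$, and Lemma \ref{SubtleExponent: L} together with $f^{p^L\tr{\lambda}{L}}\in\bracket{\m}{L}$ for the truncation case. The only cosmetic differences are that in Case 1 you bound $\newfg{e}\leq(p^e-1)\lambda$ and pass to the limit rather than showing $\newfg{e}\leq\new{e}$ directly, and in Case 2 you re-derive the easy direction of Lemma \ref{fptfg<fptf: L} (via Frobenius powers) instead of citing it.
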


\begin{proof}
By Proposition \ref{fptf<fptfg: P}, it suffices to show that $\fpt{f} \geq \fpt{f+g}$.  First say that $\fpt{f}=\lambda$. 
It is enough to show that for all $e \geq 1$, $(f+g)^{\new{e}+1}\in \bracket{\m}{e}$, so that $\new{e} \geq \nu_{f+g}(p^e)$.
By the Binomial Theorem, it suffices to show that for all $0 \leq k \leq \new{f}+1$, $f^{\new{e}+1-k} g^{k} \in \bracket{\m}{e}$.
To this end, take any monomial $\mu$ of such an $f^{\new{e}+1-k} g^{k}$.  Then
\begin{equation} \label{lambdaGEQ: e}
\qdeg \mu \geq (\new{e}+1-k)\cdot \qdeg f + k \cdot (\qdeg f + 1) = (\new{e} + 1) \cdot \qdeg f + k \geq (\new{e} + 1) \cdot \qdeg f .
\end{equation}
By Lemma \ref{Truncation: P}, $\new{e} = p^e \tr{\lambda}{e}$, and by definition, $\tr{\alpha}{e} \geq \alpha - \frac{1}{p^e}$ for all $0 < \alpha \leq 1$.  
Then by \eqref{lambdaGEQ: e}, 
\begin{align*}
\qdeg \mu &\geq (p^e \tr{\lambda}{e} + 1) \cdot \qdeg f \\
& \geq \( p^e \(\lambda - \frac{1}{p^e}\) + 1 \) \cdot \qdeg f \\
& = p^e \cdot \lambda \cdot \qdeg f \\
&= p^e \cdot \sum \qdeg x_i. 
\end{align*}
We may now conclude that $\mu \in \bracket{\m}{e}$ Lemma \ref{boundNotInBracket: L}.

Now say that $\fpt{f} = \tr{\lambda}{\Ell}$ and $(p^{\Ell} - 1) \cdot \lambda \in \N$ for some $L\leq 1$.
By Lemma \ref{fptfg<fptf: L}, it suffices to show that $(f+g)^{p^L \cdot \fpt{f}} \in \bracket{\m}{\Ell}$. 
Indeed, $p^L \cdot \fpt{f} > p^L \tr{\fpt{f}}{L} = \new{L}$ (the equality by Proposition \ref{Truncation: P}), so that $f^{p^L \cdot \fpt{f}} \in \bracket{\m}{L}$;
thus, $(f+g)^{p^L \cdot \fpt{f}} \equiv f^{p^L \cdot \fpt{f}} \equiv 0 \bmod \bracket{\m}{L}$ by Lemma \ref{SubtleExponent: L}.
\end{proof}

We see that the hypotheses of Proposition \ref{higherdegreefpt: P} are often satisfied in Example \ref{ExtraMonomialCounterex: E} below.  We also see that the statement of the proposition is sharp in the sense that there exist $f$ and $g$ satisfying its hypotheses such that $\fpt{f} = \tr{\lambda}{\Ell}$ for some $L\geq 1$, $(p^{\Ell} - 1)  \cdot  \lambda \notin \N$, and $\fpt{f+g} > \fpt{f}$.

\begin{example}
\label{ExtraMonomialCounterex: E}

Let $f = x^{15}+x y^7 \in \LL[x,y]$, which is \qh with $\qdeg f = 15$ under the grading determined by $(\qdeg x, \qdeg y) = (1,2)$, and has an isolated singularity at the origin when $p \geq 11$.  It follows from Theorem \ref{Main2D: T}
that \[ \fpt{f} = \tr{ \frac{1+2}{15}}{L} = \tr{\frac{1}{5}}{L}, \] where $1 \leq L \leq \order{p}{5} \leq 4$, or $L = \infty$ (i.e., $\fpt{f} = \frac{1}{5}$).  Furthermore, as $f$ is a binomial, we can use the algorithm given in \cite{Binomials}, and recently implemented by Sara Malec, Karl Schwede, and the third author in an upcoming {\tt{Macaulay2}} package, to compute the exact value $\fpt{f}$, and hence, the exact value of $L$ for a fixed $p$.  We list some of these computations in Figure \ref{table}.

\begin{figure}[h!]
\begin{center}
\begin{minipage}[t]{5cm}
\begin{tabular}{| c | c | c | c |}  \hline
$p$ & $L$ & $(p^L-1) \cdot \frac{1}{5} \in \N?$ \\  \hline
11 & 1 & Yes \\
13 & 1 & No \\
$17$ & $1$ & No \\ 
$19$ & $2$ & Yes \\ 
23 & 4 & Yes \\
29 & $\infty$ & -- \\
31 & 1 & Yes \\
\hline
\end{tabular}
\end{minipage}
\hfill
\begin{minipage}[t]{5cm}
\begin{tabular}{| c | c | c | c |}  \hline
$p$ & $L$ & $(p^L-1) \cdot \frac{1}{5} \in \N?$ \\  \hline
$37$ & $4$ & Yes \\ 
$41$ & $1$ & Yes \\ 
$43$ & $\infty$ & -- \\
$47$ &  1 & No \\ 
53 & 4 & Yes \\
59 & 2 & Yes \\ 
61 & 1 & Yes \\
\hline
\end{tabular}
\end{minipage}
\hfill
\begin{minipage}[t]{5cm}
\begin{tabular}{| c | c | c | c |}  \hline
$p$ & $L$ & $(p^L-1) \cdot \frac{1}{5} \in \N?$ \\  \hline
67 & 1 & No \\
$71$ & $\infty$ & -- \\ 
73 & $3$ & No \\
79 & 2 & Yes \\
83 & 2 & No \\
97 & 1 & No \\
101 & 1 & Yes \\
 \hline
\end{tabular}
\end{minipage}
\end{center}
\label{table}
\caption{Some data on $F$-pure thresholds of $f=x^{15} + xy^7 \in \LL[x,y]$.}
\end{figure}

We see that the hypotheses of Proposition \ref{higherdegreefpt: P} are often satisfied in this example, and it follows that $\fpt{f} = \fpt{f+g}$ for every $g \in \Deg{R}{\geq 16}$ whenever either ``$\infty$'' appears in the second column or ``Yes'' appears in the third.  
When $p=17$, however, we have that 
$\fpt{f} = \tr{\frac{1}{5}}{1} = \frac{3}{17}$, and 
when $g \in \set{x^{14} y, x^{12}y^2, y^8,x^{13}y^2, x^{14}y^2  } \subseteq \Deg{R}{\geq 16}$, 
one may verify that $(f+g)^3 \notin {\m}^{[17]}$, so that 
 it follows from Lemma \ref{fptfg<fptf: L} that $\fpt{f+g} > \frac{3}{17}$.
For another example of this behavior, it can be computed that when $p = 47$ and $g \in \set{x^{12}y^2, x^{10}y^3, x^8y^4, x^4y^6,  x^9y^4, x^{10}y^4  }$, then $\fpt{f+g}>\fpt{f}$.
\end{example}

We now present the main result of this subsection. 
\begin{proposition}
\label{madicusc: P}  
Suppose that  $f \in \mathbb{F}_p[x_1, \cdots, x_n]$ is \qh under some $\NN$-grading such that $\sqrt{\jac{f}} =(x_1, \ldots, x_n)$ and ${\qdeg f}\geq {\sum \qdeg x_i}$. Then $\fpt{f+g}=\fpt{f}$ for each $g \in \Deg{R}{\geq n \qdeg f - \sum \qdeg x_i+1}$.
\end{proposition}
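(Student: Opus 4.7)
First, the easy direction $\fpt{f+g} \geq \fpt{f}$ is immediate from Proposition \ref{fptf<fptfg: P} once I observe that $n\qdeg f - \sum \qdeg x_i + 1 \geq \qdeg f + 1$, which holds when $n \geq 2$ since then $(n-1)\qdeg f \geq \sum \qdeg x_i$ by hypothesis (the case $n = 1$ is trivial, because then $f = c x_1^m$ and the hypothesis forces $f + g$ to equal $x_1^m$ times a polynomial with nonzero constant term, so $\fpt{f+g} = 1/m = \fpt{f}$). For the other direction, I would invoke Theorem \ref{Main: T}: write $\lambda := \sum \qdeg x_i / \qdeg f = a/b$ in lowest terms. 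If $\fpt{f} = \lambda$, Proposition \ref{higherdegreefpt: P} applies directly (its degree bound $\qdeg f + 1$ being weaker than ours), so I may assume instead that $\fpt{f} = \tr{\lambda}{L} - E/p^L$ for some $(L, E) \in \NN^2$ with $L \geq 1$ and $0 \leq E \leq n - 1 - \up{(\lpr{\paren{a}{p^L}}{b} + a)/b}$.

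Set $N := p^L \fpt{f} = p^L \tr{\lambda}{L} - E$, a positive integer. Since $p^L \fpt{f} \in \NN$, the non-terminating base-$p$ expansion of $\fpt{f}$ has trailing digits all equal to $p - 1$, whence $\tr{\fpt{f}}{L} = \fpt{f} - 1/p^L$, and Proposition \ref{Truncation: P} yields $\new{L} = p^L \tr{\fpt{f}}{L} = N - 1$. In particular, $f^N \in \bracket{\m}{L}$ by the definition of $\new{L}$. By Lemma \ref{fptfg<fptf: L}, to conclude $\fpt{f+g} \leq \fpt{f}$ it suffices to show $(f+g)^N \in \bracket{\m}{L}$, and the Binomial Theorem reduces this to checking $f^{N-k} g^k \in \bracket{\m}{L}$ for each $1 \leq k \leq N$.

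Every monomial in $f^{N-k} g^k$ has weighted degree at least $(N-k)\qdeg f + k(n\qdeg f - \sum \qdeg x_i + 1)$, while any monomial outside $\bracket{\m}{L}$ has weighted degree at most $(p^L - 1) \sum \qdeg x_i$ by Lemma \ref{boundNotInBracket: L}. Since the coefficient of $k$ above, namely $(n-1)\qdeg f - \sum \qdeg x_i + 1$, is at least $1$ (using $\qdeg f \geq \sum \qdeg x_i$ and $n \geq 2$), the resulting strict inequality is weakest at $k = 1$, so it suffices to verify $N \qdeg f + (n-1)\qdeg f - \sum \qdeg x_i + 1 > (p^L - 1) \sum \qdeg x_i$. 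Using Lemma \ref{explicittruncation: L} to rewrite $N \qdeg f = p^L \sum \qdeg x_i - \lpr{\paren{a}{p^L}}{b} \qdeg f / b - E \qdeg f$, this collapses to $(n - 1 - E - \lpr{\paren{a}{p^L}}{b}/b) \qdeg f + 1 > 0$; the bound on $E$ supplied by Theorem \ref{Main: T} then gives $n - 1 - E - \lpr{\paren{a}{p^L}}{b}/b \geq a/b > 0$, which suffices. The main obstacle is precisely this final arithmetic reduction: Theorem \ref{Main: T}'s bound on $E$ is tight enough for the inequality to hold exactly because the ceiling in $\up{(\lpr{\paren{a}{p^L}}{b} + a)/b}$ dominates $(\lpr{\paren{a}{p^L}}{b} + a)/b$, and this is the heart of why the degree bound $n \qdeg f - \sum \qdeg x_i + 1$ is the correct one.
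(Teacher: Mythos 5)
Your proof is correct and follows essentially the same route as the paper's: both reduce to showing $(f+g)^{p^L \fpt{f}} \in \bracket{\m}{L}$ via Lemma \ref{fptfg<fptf: L}, expand by the Binomial Theorem, and kill each term $f^{N-k}g^k$ by comparing the degree lower bound (minimized at $k=1$) against the upper bound of Lemma \ref{boundNotInBracket: L}. The only cosmetic difference is that you carry the exact value of $N\qdeg f$ from Lemma \ref{explicittruncation: L} together with the sharp ceiling bound on $E$, where the paper instead uses the cruder consequence $\fpt{f} \geq \lambda - \frac{n-1}{p^L}$; both yield the same final inequality.
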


\begin{proof}
Let $\lambda = \frac{\sum \qdeg x_i}{\qdeg f}$.
If $\fpt{f} = \lambda$, then Proposition \ref{higherdegreefpt: P} implies that $\fpt{f}= \fpt{f+g}$. For the remainder of this proof, we will assume that $\fpt{f} \neq \lambda$.  By Proposition \ref{fptf<fptfg: P}, it suffices to show that $\fpt{f} \geq \fpt{f+g}$.  
Since $\fpt{f} = \tr{\lambda}{L} - \frac{E}{p^L}$ for some integers $E \geq 0$ and $L \geq 1$ by Theorem \ref{Main: T}, it is enough to show that $(f+g)^{p^L \fpt{f}} \in \bracket{\m}{\Ell}$ by Lemma \ref{fptfg<fptf: L}. 

To this end, note that 
\begin{equation}
 \label{crucialinequality: e}
 \fpt{f} = \tr{\lambda}{\Ell} - \frac{E}{p^L} \geq \lambda - \frac{1}{p^L} - \frac{E}{p^L} \geq \lambda - \frac{n-1}{p^L}, 
 \end{equation}
where the first inequality follows from Lemma \ref{BasicProperties: L}, and the second from our bounds on $E$.  
Suppose, by way of contradiction, that $(f+g)^{p^L \cdot \fpt{f}} \notin \bracket{\m}{\Ell}$.  As \[ (f+g)^{p^L \cdot \fpt{f}} = f^{p^L \cdot \fpt{f}} + \sum_{k=1}^{p^L \cdot \fpt{f}}  \binom{p^L \cdot \fpt{f}}{k}  f^{p^L \cdot \fpt{f} - k}  g^{k}, \] the inequality $p^L \cdot \fpt{f} > p^L \tr{\fpt{f}}{\Ell} = \new{\Ell}$ implies that $f^{p^L \cdot \fpt{f}} \in \bracket{\m}{\Ell}$, and so there must exist $1 \leq k \leq p^L \cdot \fpt{f}$ for which $f^{p^L \cdot\fpt{f}-k}  g^k \notin \bracket{\m}{\Ell}$. We will now show, as in the proof of Lemma \ref{SubtleExponent: L}, that this is impossible for degree reasons.  Indeed, for such a $k$, there exists a supporting monomial $\mu$ of $f^{p^L \fpt{f} - k} g^k$ not contained in $\bracket{\m}{\Ell}$, so that $\qdeg \mu \leq (p^L - 1) \cdot \sum \qdeg x_i$ by Lemma \ref{boundNotInBracket: L}. However, as $g \in \Deg{R}{\geq n \cdot \qdeg f - \sum \qdeg x_i + 1}$, 
\begin{equation} \label{deriv: e} \qdeg \mu  \geq \qdeg f \cdot (p^L \cdot \fpt{f} - k) + k \cdot \( n \cdot \qdeg f - \sum \qdeg x_i + 1 \). \end{equation}
The derivative with respect to $k$ of the right-hand side of \eqref{deriv: e} is $(n-1) \qdeg f - \sum \qdeg x_i + 1$, which is always nonnegative by our assumption that $\qdeg f \geq \sum \qdeg x_i$.  Thus, the right hand side of \eqref{deriv: e} is increasing with respect to $k$, and as $k \geq 1$, 
\begin{align*}
\qdeg \mu & \geq \qdeg f \cdot (p^L \cdot \fpt{f} - 1) + \(n \cdot \qdeg f - \sum \qdeg x_i + 1\) \\
	        & \geq \qdeg f \cdot (p^L\cdot \lambda - n) + \( n \cdot \qdeg f - \sum \qdeg x_i + 1 \) \\ 
                & = p^L \cdot \qdeg f \cdot \lambda - \sum \qdeg x_i + 1 = \( p^L - 1 \) \cdot \sum \qdeg x_i + 1,
\end{align*}
where the second inequality above is a consequence of \eqref{crucialinequality: e}.  Thus, we have arrived at a contradiction, and we conclude that $(f+g)^{p^L \cdot \fpt{f}} \in \bracket{\m}{\Ell}$, completing the proof.
\end{proof}

\bibliographystyle{alpha}
\bibliography{refs}

\vspace{.5cm}

{\tiny
\noindent \small \textsc{Department of Mathematics, University of Utah, Salt Lake City, UT 84112} 

\emph{Email address}:  \href{mailto:dhernan@math.utah.edu}{\tt dhernan@math.utah.edu} 

\vspace{.25cm}

\noindent \small \textsc{Department of Mathematics, University of Virginia, Charlottesville, VA 22904} 

\emph{Email address}:  \href{mailto:lcn8m@virginia.edu}{\tt lcn8m@virginia.edu} 

\vspace{.25cm}

\noindent \small \textsc{Department of Mathematics, University of Minnesota, Minneapolis, MN  55455} 

\emph{Email address}:  \href{mailto:ewitt@umn.edu}{\tt ewitt@umn.edu} 

\vspace{.25cm}

\noindent \small \textsc{Department of Mathematics, University of Nebraska, Lincoln, NE  68588} 

\emph{Email address}:  \href{mailto:wzhang15@unl.edu}{\tt wzhang15@unl.edu} 
}
\end{document}